\numberwithin{equation}{section}
\theoremstyle{definition}
	\newtheorem{definition}{Definition} 
	\newtheorem*{definition*}{Definition}
	\numberwithin{definition}{section}
\theoremstyle{plain}
	\newtheorem{lemma}[definition]{Lemma}
	\newtheorem{proposition}[definition]{Proposition}
	\newtheorem{theorem}[definition]{Theorem}
	\newtheorem{problem*}[definition]{Problem}
	\newtheorem*{theorem*}{Theorem}
	\newtheorem{corollary}[definition]{Corollary}
	\newtheorem{conjecture}[definition]{Conjecture}
\theoremstyle{remark}
	\newtheorem{remark}[definition]{Remark}
\renewcommand{\bf}{\textbf}
\renewcommand{\phi}{\varphi}
\newcommand{\C}{\mathbb{C}}
\newcommand{\Z}{\mathbb{Z}}
\newcommand{\N}{\mathbb{N}}
\newcommand{\R}{\mathbb{R}}
\newcommand{\xto}{\xrightarrow}
\newcommand{\cprime}{$'$}
\renewcommand{\L}{\mathfrak}
\renewcommand{\L}{\mathfrak}
\begin{document}
\bibliographystyle{plain}

\title[Orders on Lie Groups]{Invariant Orders on Hermitian Lie Groups}
\author{Gabi Ben Simon and Tobias Hartnick}
\date{\today \\  2010 \emph{Mathematics Subject Classification:}  06A06, 06F15, 11E57, 22E46, 51L99}
\begin{abstract} We study three natural bi-invariant partial orders on a certain covering group of the automorphism group of a bounded symmetric domain of tube type; these orderings are defined using the geometry of the Shilov boundary, Lie semigroup theory and quasimorphisms respectively. Our main result shows that these orders are related by two inclusion relations. In the case of ${\rm SL_2}(\R)$ we can show that they coincide. We also prove a related coincidence of orders for the universal covering of the group of homeomorphisms of the circle.
\end{abstract}
\maketitle

\section{Introduction}

Consider the group $G_0 := PU(1,1)$ of biholomorphic automorphisms of the Poincar\'e disc and its universal covering group $G$. There are (at least) three different ways to define a bi-invariant partial order on $G$:

Firstly, the action of $G_0$ on the disc by rational linear transformations extends to the boundary $S^1$, and this boundary action lifts to an action of $G$ on the real line. Then the natural order on $\R$ induces a bi-invariant partial order on $G$ via
\[g \geq h :\Leftrightarrow \forall x \in \R: g.x \geq h.x.\]
We refer to this order as the \emph{geometric order} on $G$.\\

In order to state the definitions of the other two orders on $G$, we remark that any bi-invariant partial order $\leq$ on $G$ is uniquely determined by its \emph{order semigroup} $G^+ := \{g \in G\,|\, g \geq e\}$, hence we can define the other two orders by giving their order semigroups.\\

The best studied class of subsemigroups of Lie groups is the class of Lie semigroups \cite{HHL, Neeb, HiNe}. Invariant orders on simple Lie groups arising from Lie semigroups have been classified by Ol{\cprime}shanski{\u\i} \cite{Olshanski}, following the pioneering work of Vinberg \cite{Vinberg}. In the present case their classification is particularly simple: The Lie algebra $\L g$ of $G$ admits a unique pair of ${\rm Ad}$-invariant closed pointed convex cones $\pm C$. One of these two cones, say $C$, exponentiates into the order semigroup of the geometric order. We refer to it as the \emph{positive} cone. Now denote by $G^+$ the closure of the semigroup generated by $\exp(C)$. Then $G^+$ is the order semigroup of a bi-invariant partial order on $G$ and at the same time a Lie semigroup. We thus refer to the associated order as the \emph{Lie semigroup order} on $G$. Up to inversion, it is the unique continuous order on $G$ in the sense of \cite{Neeb}. By construction, this order is refined by the geometric order. (Here an invariant order $\leq$ is said to \emph{refine} another order $\preceq$ if the order semigroup of $\leq$ contains the order semigroup of $\preceq$.)\\

Finally, we propose a third way to introduce a bi-invariant partial order on $G$, which is not so classical as the two construction above, but generalizes nicely to other types of groups. Given any family $\mathcal F$ of real valued continuous functions  on $G$ we obtain a closed semigroup $S_{\mathcal F}$ of $G$ by setting
\[S_{\mathcal F} :=\{g \in G\,|\, \forall f \in \mathcal F\, \forall h \in G: f(hg) \geq f(h) \}.\]
As pointed out in \cite[Prop. 1.19]{HiNe}, every closed submonoid of $G$ is of the form $S_{\mathcal F}$. However, in general the set $\mathcal F$ will be large, and it can be a hard problem to find a reasonably small set $\mathcal F$ for a given closed monoid $S$. (For example, it is highly non-trivial to show that $\mathcal F$ can always be chosen to consist of analytic functions, see \cite[Thm. 1.29]{HiNe}.) From this point of view, semigroups $S_{\{f\}}$ associated with a single continuous function are rather special.\\

Now, on $G$ there is a distinguished continuous function $T$ called the \emph{translation number}, which also arises from the action of $G_0$ on the circle. To give a precise definition, we first recall that Poincar\'e's \emph{rotation number} on the group of orientation preserving homeomorphisms on the circle \cite{Po1, Po2} pulls back to a function $R: G_0 \to \R/\Z$ via the $G_0$-action on the circle discussed above; then $T$ is the unique continuous lift of $R$ with $T(e) = 0$. In terms of the $G$-action on $\R$, this function can be expressed as
\[T(g) = \lim_{n\to\infty}\frac{g^n.x-x}{n}\quad(g \in G, x \in \R).\]
The semigroup $S_{\{T\}}$ associated with $T$ turns out to define a bi-invariant partial order; moreover, it is a maximal (proper) conjugation-invariant subsemigroup of $G$. We thus denote it by $G^+_{\max}$ and refer to the associated order as the \emph{maximal order} on $G$. This maximal order is easily seen to refine both the geometric and the Lie semigroup order. One may now wonder, whether any of the above refinements is proper. In fact, they are not:
\begin{theorem}\label{MainThmRk1} The geometric order, the Lie semigroup order and the maximal order on $G$ coincide. In particular, the Lie semigroup $G^+$ is a maximal conjugation-invariant subsemigroup of $G$ and can be described by a single continuous function.
\end{theorem}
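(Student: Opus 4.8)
The plan is to exploit the given chain $G^+ \subseteq G^+_{\mathrm{geo}} \subseteq G^+_{\max}$, where $G^+_{\mathrm{geo}} := \{g \in G : g.x \ge x \text{ for all } x \in \R\}$ is the order semigroup of the geometric order, and to close the loop by proving the two reverse inclusions $G^+_{\max} \subseteq G^+_{\mathrm{geo}}$ and $G^+_{\mathrm{geo}} \subseteq G^+$. The one tool I would set up first is the classical Poincar\'e dichotomy for the $G$-action on $\R$: since $x \mapsto g.x - x$ is continuous and $1$-periodic (the central element acts by $x \mapsto x+1$), it has constant sign exactly when it has no zero, which gives $T(g) > 0 \Leftrightarrow g.x > x$ for all $x$, and $T(g) = 0 \Leftrightarrow g$ has a fixed point in $\R$. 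In particular $g.x_0 \le x_0$ at a single point $x_0$ already forces $T(g) \le 0$.

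Step 2, the inclusion $G^+_{\max} \subseteq G^+_{\mathrm{geo}}$, I expect to be the clean part, and I would prove it by contraposition. Suppose $g \notin G^+_{\mathrm{geo}}$, so $g.x_0 < x_0$ for some $x_0$; write $a := g.x_0 < x_0$. Let $X_0 \in \mathrm{int}(C)$ generate the rotation subgroup and set $h := \exp(\eps X_0)$. For $\eps > 0$ this satisfies $h.x > x$ for all $x$, hence $T(h) > 0$; and since $h.a \to a < x_0$ as $\eps \to 0$, for small $\eps$ we have $(hg).x_0 = h.a \le x_0$. By the dichotomy this yields $T(hg) \le 0 < T(h)$, so $g \notin S_{\{T\}} = G^+_{\max}$. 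Hence $G^+_{\max} = G^+_{\mathrm{geo}}$.

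Step 1, the inclusion $G^+_{\mathrm{geo}} \subseteq G^+$, is where the real work sits. I would first reduce to strictly positive elements: if $g.x \ge x$ for all $x$, then $\exp(\eps X_0)g$ is strictly positive, since $(\exp(\eps X_0)g).x > g.x \ge x$; so if every strictly positive element lies in $G^+$, then letting $\eps \to 0$ and using that $G^+$ is closed gives $g \in G^+$ as well. It therefore suffices to show $\{g : g.x > x\ \forall x\} \subseteq G^+ = \overline{\langle \exp C\rangle}$. The forward elliptic elements are exactly $\exp(\mathrm{int}\,C)$ (every elliptic element is conjugate to some $\exp(tX_0)$, $t > 0$, and $C$ is $\Ad$-invariant), so they lie in $G^+$; the same holds for the forward parabolic elements $\exp(\partial C \setminus \{0\})$ and for the central element $z = \exp(\pi X_0)$ with $z.x = x+1$. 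The remaining strictly positive elements are the hyperbolic lifts with $T(g) \in \Z_{>0}$, which are not single exponentials from $C$; here I would realize $g$ as a product of two forward elliptic elements, using that in $SL_2(\R)$ a product of two half-turns is a hyperbolic translation, and adjusting the integer translation number by a central power $z^k \in G^+$.

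The main obstacle is precisely this last decomposition: translating the hyperbolic geometry of products of rotations into a clean statement at the level of the universal cover, with the translation-number bookkeeping correct, is the computational heart of the argument. An alternative that sidesteps the explicit product formulas is to invoke the uniqueness (up to inversion) of the continuous order on $G$ noted above: one checks that $G^+_{\mathrm{geo}}$ is a Lie semigroup whose tangent wedge at $e$ is $C$, hence a continuous order, and since it lies on the same side as $G^+$ (namely $T \ge 0$), uniqueness forces $G^+_{\mathrm{geo}} = G^+$. Either way, combining Steps 1 and 2 gives $G^+ = G^+_{\mathrm{geo}} = G^+_{\max}$; the ``in particular'' statement is then immediate, since $G^+_{\max}$ is by construction a maximal conjugation-invariant subsemigroup described by the single function $T$.
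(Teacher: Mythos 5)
Your Step 2 and your opening dichotomy are essentially the paper's own argument in light disguise: the equivalence $T(g)>0 \Leftrightarrow g.x>x$ for all $x$ is Proposition \ref{MonotoneConv}, and your perturbation of $g$ by a small lifted rotation $\exp(\eps X_0)$ (which acts on $\R$ precisely as a small translation) is the ``small translations'' trick of Proposition \ref{SubgroupsOfH}, which yields Corollary \ref{SL2Max1}, i.e.\ maximal $=$ geometric. Where you genuinely diverge is in attaching the Lie semigroup to the other two. The paper never decomposes group elements: it proves $G^+_{cont}=G^+_{\max}$ abstractly via the recognition criterion (Theorem \ref{Recognition}, in the form of Corollary \ref{RecognitionLie}), using the Hilgert--Hofmann trichotomy $G={\rm Int}(G^+_{cont})\cup{\rm Int}((G^+_{cont})^{-1})\cup\exp_G(\L g)$ of Lemma \ref{HilgertHofmann} together with the wedge equality of Proposition \ref{SameWedge} (Ol{\cprime}shanski{\u\i}) to show the two dominant sets coincide and are open, and then a point-set connectedness argument to exclude $G^+_{cont}\subsetneq G^+_{\max}$; the geometric order is then squeezed in between. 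Your primary route---every strictly positive element is a central power times a product of at most two exponentials from $C$, with half-turns handling the hyperbolic lifts---is more elementary and explicit, and is exactly the computation the paper remarks ``would probably be possible'' but deliberately avoids in order to exercise its machinery. What your route buys is self-containedness at the level of the action on $\R$; what it costs is rank-one-specific bookkeeping (e.g.\ verifying that the product of the two positive half-turn lifts has translation number exactly $1$, say by deforming the two centers together so the product tends to the central element $z$) and no prospect of generalizing, whereas the recognition criterion is the tool the paper hopes to extend toward Conjecture \ref{MainConjecture}.

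Three concrete soft spots. First, your parabolic case is misstated: elements of $\exp(\partial C\setminus\{0\})$ fix every lift of the parabolic's fixed point, so they have $T=0$ and are \emph{not} strictly positive; the strictly positive parabolic lifts are $z^k\exp(N)$ with $k\geq 1$, so the central factor is needed in this case too, not only for hyperbolics. This is harmless but should be fixed. Second, your fallback alternative is circular as stated: Neeb's uniqueness applies to \emph{continuous} orders, whose order semigroups are by definition closed \emph{and locally topologically generated}, and verifying local topological generation of $G^+_{\mathrm{geo}}$ (equivalently, that it is a Lie semigroup) is precisely the content of the inclusion $G^+_{\mathrm{geo}}\subseteq\overline{\langle\exp C\rangle}$ you are trying to prove; closedness plus tangent wedge $C$ does not suffice---indeed the entire difficulty of Conjecture \ref{MainConjecture} is that $G^+_{\max}$ is a closed pointed invariant semigroup with the correct wedge which is not known to be a Lie semigroup. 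So you must commit to the decomposition route (or reproduce the paper's recognition argument). Third, the closing ``in particular'' is not immediate from the construction: Proposition \ref{PointedMaximality} gives maximality of $G^+_{\max}$ only among \emph{pointed} conjugation-invariant semigroups; the unqualified maximality asserted in the theorem requires the extra argument the paper gives in Section 3.2, where simplicity of $G$ is used to show that any invariant semigroup strictly containing $G^+_{\max}$ meets the center in a finite-index subgroup and is therefore all of $G$.
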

Theorem \ref{MainThmRk1} will be proved in Proposition \ref{SL2case}. Note that the function $T$ defining our semigroup $G$ is only continuous, not $C^1$, and in particularly not analytic.\\

The three orders referred to in Theorem \ref{MainThmRk1} can be defined for much more general groups than the universal cover of $SL_2(\R)$. For example, consider the (non-compact) symplectic group $G_0 := {\rm Sp}(2n, \R)$ and its universal covering $G$. Then the Lie algebra $\L g$ of $G$ still contains a unique pair of invariant cones $\pm C$ and the \emph{Lie semigroup order} can still be defined \cite{Olshanski}. The role of the translation number in the $SL_2(\R)$-example is now taken by the Maslov quasimorphism $\mu_{G}$ on $G$, i.e. the semigroup $S_{\{\mu_G\}}$ is a maximal conjugation-invariant subsemigroup of $G$, which gives rise to a refinement of the Lie semigroup order called the \emph{maximal order} on $G$ (see Section \ref{MaxQM} below). It is also possible to define a \emph{geometric order} in this setup: The action of $G_0$ on the corresponding Lagrangian Grassmannian $\Lambda(\R^{2n})$ lifts to an action of $G$ on the universal covering of $\Lambda(\R^{2n})$. This universal covering admits a natural ordering modelled on the cone of positive-definite quadratic form, which in turn induces a geometric ordering on $G$ \cite{ClercKoufany, BSH2}.\\

Basically there is a similar picture for any simple Hermitian Lie group, but there are some caveats. Let us assume that $G_0$ is the biholomorphism group of an irreducible bounded symmetric domain $\mathcal D$ and denote by $\widetilde{G_0}$ its universal covering. Unlike the case of the symplectic groups, the fundamental group of $G_0$ may contain torsion elements, and the most natural place to compare orders is \emph{not} $\widetilde{G_0}$ but the quotient $G$ of $\widetilde{G_0}$ by the torsion part of $\pi_1(G_0)$. On this group there exists always a unique (up to multiples) aperiodic homogeneous quasimorphism\footnote{See Section \ref{MaxQM} for precise definitions.} $\mu_G$ generalizing the translation number, respectively the homogeneization of the Maslov index, and the \emph{maximal order} can be defined as before. Now, as far as the geometric order is concerned, it will exist if and only if the bounded symmetric domain $\mathcal D$ in question is of tube type. In this case (and only in this case) there exists a unique (up to inversion) causal structure on the Shilov boundary $\check S$ of $\mathcal D$ \cite{Kaneyuki}, which gives rise to a partial order on the universal cover $\check R$ of $\check S$. Then the \emph{geometric order} on $G$ is the one induced from the action of $G$ on $\check R$. (It is easy to see that $G$ acts effectively on $\check R$ so that this is well-defined.) The geometric order was first explored systematically in \cite{ClercKoufany}, see also \cite{BSH2} and \cite {Konstantinov}. (There is a small issue about the precise definition; we will always work with closed orders here, while \cite{ClercKoufany} does not. This will be discussed in more details in Section \ref{OtherOrders} below.) As first pointed out in \cite{Vinberg}, the group $G$ also carries a bi-invariant partial order induced by a Lie semigroup. However, as proved in \cite{Olshanski}, if $G$ is not locally isomorphic to a symplectic group, then there is in fact a continuum of Lie semigroup orders on $G$. The order that we are interested in here, is the \emph{maximal} Lie semigroup order on $G$. By \cite{Neeb}, this is at the same time the \emph{maximal continuous} bi-invariant partial order on $G$, and we prefer the latter term (since it can be defined for more general topological groups). Now we have the following result:
\begin{theorem}
The above bi-invariant partial orders are related as follows:
The maximal order is a refinement of the maximal continuous order. If $G$ is of tube type, then the geometric order refines the maximal continuous order and is refined by the maximal order. In symbols,
\[G^+_{cont} \subset G^+_{geom}\subset G^+_{\max}.\]
\end{theorem}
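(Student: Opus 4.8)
The plan is to establish the two inclusions separately, working throughout with order semigroups rather than the orders themselves, so that each inclusion $G^+_{cont} \subset G^+_{geom}$ and $G^+_{geom} \subset G^+_{\max}$ becomes a containment of subsemigroups of $G$. The key structural fact I would lean on is that each of the three semigroups is closed and conjugation-invariant, and that two of them admit clean ``local'' descriptions: the maximal continuous semigroup $G^+_{cont}$ is a Lie semigroup, so by Lie semigroup theory it is the closed subsemigroup generated by $\exp(C)$ for the positive cone $C \subset \L g$, while $G^+_{\max} = S_{\{\mu_G\}}$ is cut out by a single condition on the quasimorphism $\mu_G$.

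For the first inclusion $G^+_{cont} \subset G^+_{geom}$, I would first verify it infinitesimally and then integrate. The geometric order semigroup $G^+_{geom}$ is closed and contains $e$, so it suffices to show $\exp(C) \subset G^+_{geom}$; since $G^+_{geom}$ is a semigroup and $G^+_{cont}$ is the closed semigroup \emph{generated} by $\exp(C)$, this containment then propagates to all of $G^+_{cont}$. To see $\exp(C) \subset G^+_{geom}$, I would examine the tangent cone (Lie wedge) of $G^+_{geom}$ at the identity: the causal structure on the Shilov boundary $\check S$ is exactly the geometric datum that singles out the $\Ad$-invariant cone $C$, so the infinitesimal generators of the geometric order should be precisely the elements of $\L g$ whose induced vector field on $\check R$ points in the causal (positive) direction everywhere, and this is the defining property of $C$. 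Thus $C$ lies in the tangent wedge of $G^+_{geom}$, and since $G^+_{geom}$ is closed and invariant, $\exp(C) \subset G^+_{geom}$.

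For the second inclusion $G^+_{geom} \subset G^+_{\max}$, I would use the description $G^+_{\max} = \{g : \forall h\ \mu_G(hg) \geq \mu_G(h)\}$ and reduce, via homogeneity and the conjugation-invariance of $\mu_G$, to showing $\mu_G(g) \geq 0$ for every $g \in G^+_{geom}$ together with the corresponding statement on every conjugate. The clean way to do this is to exhibit $\mu_G$ as arising from the geometric action on $\check R$: concretely, $\mu_G$ should coincide (up to a positive normalization) with the homogenized translation-number-type invariant built from how $g$ displaces points along the causally ordered space $\check R$. If $g \in G^+_{geom}$, then $g$ moves every point of $\check R$ in the positive direction, so its asymptotic translation number is nonnegative, giving $\mu_G(g) \geq 0$; applying the same argument to conjugates $h g h^{-1}$ (which remain in $G^+_{geom}$ by invariance) yields the full semigroup condition. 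The main obstacle I anticipate is precisely this identification of $\mu_G$ with a displacement invariant of the causal action on $\check R$: one must check that the abstractly-defined aperiodic homogeneous quasimorphism agrees with the geometrically-defined translation number, that the normalizations have the correct sign, and that the limit defining the translation number behaves well with respect to the causal order (monotonicity of $g^n.x$ along the order). Establishing this compatibility — essentially the tube-type analogue of the classical identity between the rotation/translation number and the Maslov-type quasimorphism — is where the real work lies, and the rest of the argument is then a matter of assembling closedness and invariance.
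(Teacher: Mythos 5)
Your two-inclusion architecture is sound in outline, and your second half is closer to being complete than you may realize: the reduction of $G^+_{geom} \subset G^+_{\max}$ to the single statement $\mu_G|_{G^+_{geom}} \geq 0$ is valid, and is in fact exactly the computation in the first half of the paper's proof of Proposition \ref{CalegariOrder} — from $g \in G^+_{geom}$, bi-invariance (Lemma \ref{MultiplyInequalities}) gives $(gh)^nh^{-n} \in G^+_{geom}$, hence $n(\mu_G(gh)-\mu_G(h)) + D(\mu_G) \geq 0$, and $n \to \infty$ yields $g \in G^+_{\max}$; note that your extra condition ``on every conjugate'' is vacuous, since $\mu_G$ is conjugation-invariant. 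You also correctly locate the remaining hard content, namely the compatibility of $\mu_G$ with the causal action on $\check R$. The paper does not prove this either: it imports it as the statement that $\mu_G$ \emph{sandwiches} $G^+_{geom}$ (implicit in Clerc--Koufany, explicit in \cite[Thm.~1.8]{BSH2}) and then invokes abstract maximality of $G^+_{\max}$ among sandwiched orders. So on this half you are, at worst, citing the same external result the paper cites, via a slightly more economical route (you only need the upper bound $\mu_G \geq 0$ on $G^+_{geom}$, not the full sandwich condition, though by Lemma \ref{NoUpperBound} the two are close).

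The genuine gap is in your first inclusion. You treat it as \emph{definitional} that $C$ consists of those $X \in \L g$ whose induced vector fields on $\check R$ are everywhere causal. In the paper, $C = {\bf L}(G^+_{cont})$ arises from Ol\cprime shanski\u{\i}'s classification of invariant cones, while Kaneyuki's cone field $\mathcal C_x$ is constructed independently on $\check S$ and is only unique up to inversion; the compatibility of the two is a nontrivial theorem, not a definition, and it is precisely the crux your sketch assumes away. Concretely, you would need: (i) that the evaluation image of $C$ in $T_x\check R$ is a closed proper cone — the evaluation map has kernel the stabilizer algebra $\L g(\Omega) \oplus V$, and images of closed cones under linear maps need not be closed; (ii) that this image coincides with $\mathcal C_x$ up to a global sign; and (iii) a coherent sign choice. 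This is exactly the work the paper's Proposition \ref{KaneyukiCont} performs in the Jordan-algebra model via the wedge $W = \Omega \oplus \L g(\Omega) \oplus V$ and Lawson's theorems on ordered manifolds, after which Konstantinov's uniqueness theorem (Theorem \ref{Ko}) identifies the Kaneyuki order with the orbit order $x \preceq y \Leftrightarrow \exists g \in G^+_{cont}: gx = y$, from which $G^+_{cont} \subset G^+_{geom}$ is immediate. Your infinitesimal route (verify $\exp(C) \subset G^+_{geom}$, then use that $G^+_{geom}$ is a closed semigroup and $G^+_{cont} = \overline{\langle \exp(C)\rangle}$) can be completed, but the causal-generator characterization of $C$ must be proved or cited, not asserted. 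One further scope point: the theorem's first claim, $G^+_{cont} \subset G^+_{\max}$, holds for all Hermitian $G$, including non-tube-type groups where $G^+_{geom}$ does not exist; your chain only covers the tube-type case, and the general case requires the separate argument of Proposition \ref{SameWedge}, namely $G^+_{cont} \in \mathcal M_{\mu_G}(G)$ by \cite[Lemma~3.4]{BSH} together with maximality.
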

The first statement follows immediately from work in \cite{BSH}. The fact that the geometric order refines the maximal continuous one follows from work of Konstantinov \cite{Konstantinov}. The last statement is a consequence of the results in \cite{BSH2}, which in turn refine results of Clerc and Koufany from \cite{ClercKoufany}. We will provide details in Section \ref{OtherOrders} below. In view of Theorem \ref{MainThmRk1} it is tempting to conjecture:
\begin{conjecture}\label{MainConjecture}
The maximal and maximal continuous bi-invariant order coincide. In particular they both coincide with the geometric order in the tube type case.
\end{conjecture}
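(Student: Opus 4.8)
The plan is to reduce both assertions to a single reverse inclusion. Since the inclusions $G^+_{cont}\subset G^+_{geom}\subset G^+_{\max}$ have already been established, it suffices to prove $G^+_{\max}\subset G^+_{cont}$: this one containment collapses the entire chain, giving the coincidence $G^+_{cont}=G^+_{\max}$ in general and, in the tube type case, squeezing the geometric order between two equal semigroups so that $G^+_{geom}=G^+_{cont}=G^+_{\max}$. Thus the whole conjecture amounts to showing that monotonicity of the single homogeneous quasimorphism $\mu_G$ under all right translations already forces membership in the maximal Lie semigroup.

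To attack $G^+_{\max}\subset G^+_{cont}$ I would use the maximal polydisc. Fix a maximal $r$-fold product of Poincaré discs, giving an embedding $(SL_2(\R))^r\hookrightarrow G_0$ with $r$ the rank of $\mathcal D$; on the Lie algebra level the positive cone $C$ meets the corresponding subalgebra in the product of the rank-one cones, while $\mu_G$ restricts to a sum of translation numbers. By Theorem \ref{MainThmRk1} the three orders coincide on each disc factor, so over the polydisc the condition defining $G^+_{\max}$ becomes factorwise positivity, which is exactly membership in the restricted Lie semigroup. The key mechanism is then the quantifier "for all $h\in G$" built into $S_{\{\mu_G\}}$: letting $h$ range over elements that conjugate the polydisc (equivalently, that move a maximal torus), one hopes to probe $g$ against every Cartan direction and thereby recover the full cone condition that cuts out $G^+_{cont}$. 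In the tube type case this program can be scaffolded by the geometric order: one would prove $G^+_{\max}\subset G^+_{geom}$ and $G^+_{geom}\subset G^+_{cont}$ separately, using that the $\Ad$-invariant cone $C$ and the causal cone field on $\check S$ are two incarnations of the same datum, so that causal positivity $g.x\geq x$ for all $x\in\check R$ globalizes the infinitesimal cone membership.

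The main obstacle is precisely the upgrade from a scalar to a cone. The maximal continuous order is cut out by the genuinely multidimensional cone $C\cap\mathfrak{t}$ in a Cartan subalgebra, i.e.\ by the full family of root inequalities, whereas $\mu_G$ records only one averaged number. A priori this scalar is far weaker, and the content of the conjecture is that the conjugation-invariance encoded in $S_{\{\mu_G\}}$ exactly compensates for the lost information. Turning the heuristic "vary $h$ to localize $\mu_G$ along each Cartan direction (or at each boundary point of $\check R$)" into a rigorous rigidity statement — showing that the translates of a single homogeneous quasimorphism recover every cone inequality at once — is the crux. This is a genuine rigidity phenomenon rather than a routine computation, which is why the statement is posed only as a conjecture beyond the rank-one situation settled in Theorem \ref{MainThmRk1}, where the cone is one-dimensional and the obstruction does not arise.
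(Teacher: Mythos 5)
This statement is posed in the paper as a \emph{conjecture}: the authors explicitly state they are unable to prove it, offering only the evidence of Proposition \ref{Evidence} (common Lie wedge, closedness, dense path-connected interiors, common divisible hull of interiors) and the rank-one case settled in Proposition \ref{SL2case}. Your reduction to the single inclusion $G^+_{\max}\subset G^+_{cont}$ is correct, and your closing diagnosis --- that the crux is upgrading the scalar $\mu_G$ to the full cone condition --- agrees with the authors' own assessment. But what you have written is a program, not a proof, and by your own admission the decisive step ("turning the heuristic into a rigorous rigidity statement") is missing. So there is a genuine gap, and it is exactly the open content of the conjecture.

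Beyond that, two of your intermediate claims do not survive scrutiny. First, the polydisc step: the restriction of $\mu_G$ to an $r$-fold product of $\widetilde{SL_2(\R)}$'s is a \emph{sum} $f_1+\dots+f_r$ of translation numbers, and the semigroup $S_{\{f_1+\dots+f_r\}}$ is strictly larger than the product of the factor semigroups --- because the defect is bounded, an element of the form $(g_1,z^n)$ with $z$ central in the second factor and $n$ large satisfies the defining inequality for arbitrary $g_1$, since the deficit $\inf_{h}(f_1(g_1h)-f_1(h))$ is bounded below by $-(|f_1(g_1)|+D(f_1))$. So "the condition defining $G^+_{\max}$ becomes factorwise positivity" is false; this is precisely the sense in which the paper warns that "the passage from the simple to the reductive case is non-trivial on the level of orders." (Moreover, membership of a polydisc element in $G^+_{\max}$ quantifies $h$ over all of $G$, not over the subgroup, so Theorem \ref{MainThmRk1} applied factorwise asserts nothing either way.) Second, your tube-type scaffolding needs $G^+_{geom}\subset G^+_{cont}$, but the paper proves only the \emph{opposite} inclusion (Proposition \ref{ContGeom}, via Konstantinov); the statement that causal positivity $g.x\succeq x$ for all $x\in\check R$ forces $g$ into the closure of $\langle\exp(C)\rangle$ is again open, since nothing a priori places a causally nonnegative element in that closed generated semigroup. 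Finally, note that the paper's own maximality machinery (Theorem \ref{Recognition}, Corollary \ref{RecognitionLie}) cannot close your gap: it requires an open dominant set, and as observed after \eqref{SL2Formulas}, already on a compact Cartan subgroup of a higher-rank $G$ there exist $g$ with $\mu_G(g)>0$ but $g\notin G^+_{\max}$, so $G^{++}_{\max}$ is not the open set $\{\mu_G>0\}$ beyond the rank-one case.
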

We provide some evidene for the conjecture in Proposition \ref{Evidence} below by showing that both $G^+_{cont}$ and $G^+_{\max}$ share the same Lie wedge and that their interiors share the same divisible hull; moreover, $G^+_{cont}$ and $G^+_{\max}$ are both closed and path-connected. Unfortunately, we are not able to decide whether this is enough to deduce Conjecture \ref{MainConjecture}. Our proof in the $SL_{2}(\mathbb{R})$-case is based on a criterion, which guarantees abstract maximality of Lie semigroups under a certain hypothesis (open dominants), see Corollary \ref{RecognitionLie}. This hypothesis will not be satisfied for maximal pointed invariant Lie subsemigroups of more general Hermitian Lie groups, so our proof does not carry over directly. It seems likely that one would need more general criteria guaranteeing abstract maximality of Lie semigroups. To the best of our knowledge this question has, unfortunately, not been treated systematically in the literature yet.\\

The construction of the order $G^+_{\max}$ can be carried out for any aperiodic quasimorphism on a topological group $G$. As far as finite-dimensional Lie groups are concerned, the number of examples of such quasimorphisms is limited; in fact, it is folklore that they can be classified, and we state the classification explicitly in Theorem \ref{Classification}. All of them live on reductive Hermitian Lie groups and arise essentially from the examples covered by Conjecture \ref{MainConjecture} (although it should be said that the passage from the simple to the reductive case is non-trivial on the level of orders). For genuinely different examples one has to turn to either non-connected groups (in particular, discrete groups of negative curvature, see e.g. \cite{EpFu, BeFu} and the references therein) or infinite-dimensional Lie groups (where quasimorphisms arise  for example from problems in symplectic and contact geometry, see e.g. \cite{Polt, EntP, GabiNonlinear, Py, Shel}).\\

While the focus of the present article is mainly on the case of finite-dimensional Lie groups, we found it worthwhile to develop the basic theory of bi-invariant orders associated to quasimorphisms in a generality appropriate for the treatment of these discrete or infinite-dimensional examples, whenever this was possible without too many additional efforts. To provide an application of the abstract theory beyond finite-dimensional Lie groups, we compute the maximal invariant order associated with Poincar\'e's translation number on the universal covering of the group of orientation-preserving homeomorphisms on the circle.\\

This article is organized as follows: In Section \ref{AbstractNonsense} we develop some fundamentals of the abstract theory of semigroups associated with quasimorphisms. The material here applies to general topological groups and might be of independent interest. In particular, we provide in Theorem \ref{Recognition} a recognition criterion for semigroups associated with quasimorphisms. We then specialize in Section \ref{SecLie} to the case of Lie groups and apply the recognition criterion in the case of $SL_2(\R)$. The final section describes how the geometric orders on tube type groups fit into the picture.\\

\bf{Acknowledgements:} The definition of the maximal order associated with a quasimorphism arose from discussions with Marc Burger and Danny Calegari. Joachim Hilgert, Karl Heinrich Hofmann, Karl-Hermann Neeb and Grigori Ol{\cprime}shanski{\u\i} kindly answered various of our questions about Lie semigroups. We thank all of them and also the anonymous referee, who was of great help in bringing the article into its final form. The authors also acknowledge the hospitality of IHES, Bures-sur-Yvette, and Hausdorff Institute, Bonn. The first named author is grateful to Departement Mathematik of ETH Z\"urich  and in particular Dietmar Salamon and Paul Biran for the support during this academic year. The second named author was supported by SNF grants PP002-102765 and 200021-127016.

\section{Aperiodic homogeneous quasimorphism and their maximal orders}\label{AbstractNonsense}

\subsection{The maximal order of an aperiodic quasimorphism}\label{MaxQM}
Let $G$ be a topological group. A continuous function $f: G \to \R$ is called a \emph{quasimorphism} if the function
\[\partial f: G^2 \to \R, \quad \partial f(g,h) = f(gh)-f(g) -f(h)\]
is bounded. In this case the \emph{defect} of $f$ is defined as the real number
\[D(f) := \sup_{g,h \in G} |\partial f(g,h)|.\]
A quasimorphism $f: G \to \R$ is called \emph{homogeneous} if
\[f(g^n) = nf(g) \quad (g \in G, n \in \mathbb Z).\]
Given a group $G$ we denote by $\mathcal{HQM}(G)$ the space of homogeneous quasimorphisms on $G$. Let us collect some basic facts concerning quasimorphisms, which are easily available from the standard literature on the subject (see e.g. \cite{scl}): Every quasimorphism is at bounded distance from a unique homogeneous one called its \emph{homogeneization}; in particular every bounded homogeneous quasimorphism is necessarily trivial and two quasimorphisms are at bounded distance if and only if their homogenizations coincide. Homogeneous quasimorphism share many properties with $\R$-valued homomorphisms, in particular they are automatically invariant under conjugation.
 If a group $G$ is amenable, then every homogeneous quasimorphism on $G$ is in fact a homomorphism. This applies in particular to all compact, all abelian and, more generally, all virtually solvable groups. Constructing homogeneous quasimorphisms which are not homomorphisms is more complicated; we will discuss various examples below.

 \begin{definition}
 A normal subgroup $H \lhd G$ is called a \emph{period group} for $f \in \mathcal{HQM}(G)$ if $f|_H= 0$; the quasimorphism $f$ is called \emph{aperiodic} if it does not admit a non-trivial period group. A maximal (with respect to inclusion) period subgroup of $f$ is called a \emph{kernel} of $f$.
 \end{definition}
  Given two period groups $H_1, H_2$ for a quasimorphism $f$, their product $H_1H_2$ is again a period group. Indeed, the product is a normal subgroup of $G$, since both $H_j$ are, and $f$ is bounded (by its defect) on the group $H_1H_2$, hence trivial. This implies that every homogeneous quasimorphism $f$ admits a unique kernel, which we denote by $\ker(f)$. We then obtain the following canonical factorization:
\begin{proposition}\label{Factorization1} Let $G$ be topological group and $f \in \mathcal{HQM}(G)$ be a homogeneous quasimorphism. Then there exists a unique topological group $\hat G$ and a unique factorization of $f$ as
\[f= f_0 \circ p: G \xrightarrow{p} \hat G \xrightarrow{f_0} \R,\]
where $p$ is an open surjective continuous homomorphism and $f_0$ is an aperiodic homogeneous quasimorphism.
\end{proposition}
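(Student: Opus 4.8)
The plan is to take $\hat G := G/\ker(f)$ with $p$ the canonical projection, and to descend $f$ along $p$. First I would record that $N := \ker(f)$ is closed: since $f$ is continuous its zero set is closed, and the closure $\overline{N}$ is again a normal subgroup on which $f$ vanishes by continuity, so $\overline{N}$ is a period group containing $N$; maximality of the kernel forces $\overline{N} = N$. Consequently $\hat G$ is a topological group and $p$ is a continuous, surjective, open homomorphism, as for any quotient of a topological group by a closed normal subgroup (openness because $p^{-1}(p(U)) = UN$ is open for open $U$).

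The heart of the matter is to show that $f$ is constant on cosets of $N$, so that it descends to a well-defined $f_0 \colon \hat G \to \R$. The mere vanishing $f|_N = 0$ does not suffice; one must exploit homogeneity. Fix $g \in G$ and $n \in N$. Since $N$ is normal, an inductive expansion gives $(gn)^k = g^k n_k$ with $n_k$ a product of conjugates of $n$, hence $n_k \in N$ and $f(n_k) = 0$. Applying the defect bound to the product $g^k n_k$ and using homogeneity yields
\[ |k\,f(gn) - k\,f(g)| = |f((gn)^k) - f(g^k)| = |f(g^k n_k) - f(g^k) - f(n_k)| \le D(f), \]
so that $|f(gn) - f(g)| \le D(f)/k$ for every $k$, and letting $k \to \infty$ gives $f(gn) = f(g)$. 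Thus $f$ is right $N$-invariant and descends to a unique $f_0$ with $f = f_0 \circ p$. That $f_0$ is a homogeneous quasimorphism is then immediate, since $\partial f_0(p(g),p(h)) = \partial f(g,h)$ is bounded by $D(f)$ and $f_0(p(g)^n) = f(g^n) = n\,f(g) = n\,f_0(p(g))$.

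Next I would verify that $f_0$ is aperiodic. Given any period group $H \lhd \hat G$ for $f_0$, its preimage $p^{-1}(H)$ is a normal subgroup of $G$ containing $N$ on which $f = f_0 \circ p$ vanishes; by maximality of $N$ this forces $p^{-1}(H) = N$, whence $H = p(N)$ is trivial. Hence $f_0$ has no non-trivial period group.

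For uniqueness, suppose $f = f_1 \circ q$ is a second such factorization, with $q \colon G \to G_1$ open, continuous and surjective and $f_1$ aperiodic. I would show $\ker(q) = N$. On one hand $f$ vanishes on $\ker(q)$, so $\ker(q) \subseteq N$ by maximality of the kernel. On the other hand $q(N)$ is a normal subgroup of $G_1$ on which $f_1$ vanishes (since $f_1 \circ q = f$ vanishes on $N$), so aperiodicity of $f_1$ forces $q(N) = \{e\}$, i.e. $N \subseteq \ker(q)$. Since $q$ is open and surjective with kernel $N$, the induced continuous bijective homomorphism $\hat G = G/N \to G_1$ is open, hence a topological isomorphism intertwining $f_0$ and $f_1$; this is the asserted uniqueness. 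The one genuinely non-formal step is the descent of $f$ in the second paragraph: a homogeneous quasimorphism vanishing on a normal subgroup need not otherwise be invariant under it, and it is precisely the $k \to \infty$ averaging argument, powered by homogeneity and the defect bound, that upgrades vanishing to coset-invariance.
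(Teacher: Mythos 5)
Your proof is correct and takes exactly the route the paper intends: the paper states Proposition \ref{Factorization1} without an explicit proof, immediately after constructing the unique kernel $\ker(f)$, and your construction $\hat G := G/\ker(f)$ together with the homogeneity-plus-defect argument upgrading $f|_{\ker(f)}=0$ to right-coset invariance is precisely the intended verification, with the aperiodicity and uniqueness arguments handled as the kernel's maximality dictates. The only micro-point left unsaid is that $f_0$ is continuous (the paper's quasimorphisms are continuous by definition), which is immediate from the quotient topology since $p$ is open and surjective and $f_0 \circ p = f$ is continuous.
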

In view of the proposition we will mainly focus on aperiodic quasimorphisms in the sequel.\\

We now recall some results from \cite{BSH}, which motivated the present investigation. The main upshot of that article is that quasimorphisms are closely related to bi-invariant partial orders on $G$. Here a partial order $\leq$ on $G$ is called \emph{bi-invariant},  if for all $g,h,k \in G$ the implication
\[g \leq h \Rightarrow (kg \leq kh \wedge gk \leq hk)\]
holds. Equivalently, its order semigroup
\[G^+_{\leq} := G^+ := \{g \in G\,|\, g \geq e\}\]
is a conjugation-invariant submonoid subject to the condition $G^+ \cap (G^+)^{-1} = \{e\}$. (A set with the latter property will be called \emph{pointed}.)  Bi-invariant orders have the following important multiplicativity property:
\begin{lemma}\label{MultiplyInequalities} For every bi-invariant partial order $\preceq$ on $G$ we have
\[g_1 \preceq g_2, \; h_1 \preceq h_2 \Rightarrow g_1h_1 \preceq g_2h_2.\]
\end{lemma}
\begin{proof} If $G^+$ denotes the associated order semigroup, then $h_2h_1^{-1} \in G^+$ and thus $g_2h_2h_1^{-1} g_2^{-1} \in G^+$ by conjugation invariance. Now using the semigroup property of $G^+$ we find $(g_2h_2h_1^{-1} g_2^{-1})(g_2g_1^{-1}) \in
G^+$.
\end{proof}
We now recall from \cite{BSH} that every nonzero quasimorphism $f \in \mathcal{HQM}(G)$ defines a bi-invariant partial order $\leq$ with order semigroup $G^+_f$ given by
\[G^+_f := \{g \in G\,|\, f(g) > D(f)\} \cup \{e\},\]
and that $f$ can be recovered from $G^+$ up to a positive multiple via the notion of relative growth as introduced in \cite{EP}. In fact, $f$ can be recovered up to a positive multiple from any bi-invariant partial order $\leq$ on $G$, whose order semigroup $G^+$ satisfying the following more general condition \cite[Prop. 3.3]{BSH}:
\begin{eqnarray}\label{sw}\exists C_1, C_2 \in \R: \, \{g \in G\,|\, f(g) \geq C_1\} \subset G^+_{\leq} \subset \{g \in G \,|\, f(g) \geq C_2\}.\end{eqnarray}
This motivates a systematic investigation of the collection $\mathcal IPO_f(G)$ of all orders satisfying \eqref{sw}, or equivalently, the collection $\mathcal M_f(G)$ of associated order semigroups. We refer to condition \eqref{sw} as the \emph{sandwich condition}, and say that elements of  $\mathcal IPO_f(G)$ or $\mathcal M_f(G)$ are \emph{sandwiched} by $f$. The present formulation of the sandwich condition is very symmetric and stresses the idea of obtaining sandwiched orders by varying the constant in $G^+_f$. However, for practical purposes, the following asymmetric version is more useful:
\begin{lemma}[{\cite[Lemma 3.2]{BSH}}]\label{NoUpperBound}Let $G$ be group, $f: G \to \R$ a non-trivial homogeneous quasimorphism and $\leq$ be a bi-invariant partial oder on $G$ with order semigroup $G^+$. If there exists $C_1>0$ with
\[\{g \in G\,|\, f(g) \geq C_1\} \subset G^{+},\]
then $\leq$ is sandwiched by $f$ and \eqref{sw} is satisfied with $C_2 := 0$.
\end{lemma}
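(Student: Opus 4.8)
The plan is to notice that the left-hand inclusion of the sandwich condition \eqref{sw} is already handed to us by the hypothesis (take the constant $C_1$ exactly as given), so the entire content of the lemma is the right-hand inclusion $G^+ \subset \{g \in G \,|\, f(g) \geq 0\}$; that is, I must show that every element of the order semigroup has nonnegative $f$-value. I would establish this by contradiction, playing off the semigroup property of $G^+$ against the pointedness condition $G^+ \cap (G^+)^{-1} = \{e\}$ and the homogeneity of $f$.

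So suppose $g \in G^+$ with $f(g) < 0$, and set $\delta := -f(g) > 0$. The hypothesis only controls elements on which $f$ is large and positive, but homogeneity lets me manufacture such an element out of $g$ by passing to inverse powers: since $f$ is homogeneous, $f(g^{-n}) = -n f(g) = n\delta$, which exceeds $C_1$ once $n$ is large enough. For such $n$ the hypothesis then yields $g^{-n} \in G^+$. On the other hand $g^n \in G^+$ as well, simply because $G^+$ is a submonoid and $g \in G^+$. Hence both $g^n$ and its inverse $g^{-n}$ lie in $G^+$, so pointedness forces $g^n = e$.

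I would then close the loop using the standard fact that homogeneous quasimorphisms vanish on torsion elements: from $g^n = e$ together with $f(e)=0$ and $f(g^n) = n f(g)$ we obtain $n f(g) = 0$, hence $f(g) = 0$, contradicting $f(g) < 0$. This proves $G^+ \subset \{g \in G\,|\, f(g)\geq 0\}$ and thereby verifies \eqref{sw} with $C_2 := 0$.

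The argument is short, and the single idea that makes it work is the passage to powers. The hypothesis is deliberately asymmetric — it pins down $G^+$ only from the side where $f$ is large — so a priori it says nothing about an element $g$ carrying a small negative $f$-value. The device of replacing $g$ by $g^{-n}$ amplifies that small value into a large positive one on which the hypothesis can finally bite, after which semigroup closure and pointedness finish the job. I do not expect any genuine obstacle beyond spotting this reduction; continuity of $f$ and bi-invariance of $\leq$ enter only insofar as they guarantee that $G^+$ is a pointed submonoid.
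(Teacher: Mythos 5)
Your proof is correct, and it is essentially the standard argument behind this statement: the paper itself quotes the lemma from \cite[Lemma 3.2]{BSH} without reproducing a proof, and the inverse-power amplification ($f(g^{-n})=-nf(g)\geq C_1$ for large $n$) played against semigroup closure and pointedness of $G^+$ is exactly the mechanism used there. One cosmetic simplification: once you have $g^n, g^{-n}\in G^+$ you can conclude immediately, since $f(g^n)=nf(g)<0=f(e)$ already shows $g^n\neq e$, so pointedness yields the contradiction directly and your final torsion-vanishing step is unnecessary.
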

From now on we assume that $f$ is aperiodic and $G \neq \{e\}$. This implies in particular $f \neq 0$, whence $\mathcal IPO_f(G)$ and $\mathcal M_f(G)$ are infinite. We are interested in finding a canonical representative for $\mathcal IPO_f(G)$. To this end we observe that $\mathcal M_f(G)$ is partially ordered by inclusion; this induces a partial order on $\mathcal IPO_f(G)$. Under the present assumptions there is always a unique maximal element:
\begin{proposition}\label{CalegariOrder} Suppose $f \in \mathcal{HQM}(G)$ is aperiodic. Then there exists a unique maximal element $\leq \in \mathcal IPO_f(G)$. Its associated order semigroup is explicitly given by
\begin{eqnarray}\label{GMax}G^+_{\max} = \{g \in G \,|\, \forall h \in G: f(gh) \geq f(h)\}.\end{eqnarray}
\end{proposition}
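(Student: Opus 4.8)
The plan is to show that $S := G^+_{\max}$ is the order semigroup of a bi-invariant partial order sandwiched by $f$, and that it contains every other such order semigroup; uniqueness of the maximal element then follows. Throughout I would work with the auxiliary function $\tau(g) := \inf_{h \in G}(f(gh) - f(h))$, so that $S = \{g : \tau(g) \geq 0\}$. Taking $h = e$ gives $\tau(g) \leq f(g)$, while $f(gh) - f(h) = f(g) + \partial f(g,h) \geq f(g) - D(f)$ gives $\tau(g) \geq f(g) - D(f)$; hence $f(g) - D(f) \leq \tau(g) \leq f(g)$. That $S$ is a submonoid is immediate: $e \in S$, and if $g_1, g_2 \in S$ then $f(g_1 g_2 h) \geq f(g_2 h) \geq f(h)$ for every $h$, so $g_1 g_2 \in S$.

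For conjugation invariance I would argue that $\tau$ is itself a class function. Substituting $h = x h' x^{-1}$ in the defining infimum and using that homogeneous quasimorphisms are conjugation invariant, one finds $f(xgx^{-1}\cdot xh'x^{-1}) = f(x(gh')x^{-1}) = f(gh')$ and $f(xh'x^{-1}) = f(h')$, so that $\tau(xgx^{-1}) = \inf_{h'}(f(gh') - f(h')) = \tau(g)$. Hence $S$ is conjugation invariant. The sandwich condition is then read off from the bounds above: $S \subseteq \{g : f(g) \geq 0\}$ (the upper inclusion, with $C_2 = 0$), while $\{g : f(g) \geq C_1\} \subseteq S$ for any $C_1 > D(f) \geq 0$, so that once $S$ is known to be an order semigroup, Lemma~\ref{NoUpperBound} shows it is sandwiched by $f$.

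The one place where aperiodicity is essential is pointedness, $S \cap S^{-1} = \{e\}$. The set $S \cap S^{-1}$ is a subgroup of $G$ (it is closed under products as an intersection of submonoids and closed under inversion by construction) and is normal, being conjugation invariant by the previous paragraph. If $g \in S \cap S^{-1}$ then $0 \leq \tau(g) \leq f(g)$ and $0 \leq \tau(g^{-1}) \leq f(g^{-1}) = -f(g)$, forcing $f(g) = 0$; thus $f$ vanishes on $S \cap S^{-1}$, which is therefore a period group for $f$. Aperiodicity gives $S \cap S^{-1} = \{e\}$, so $S$ is pointed and hence is the order semigroup of a bi-invariant partial order sandwiched by $f$.

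The crux is maximality: every order semigroup $G^+$ sandwiched by $f$ satisfies $G^+ \subseteq S$. By Lemma~\ref{NoUpperBound} I may assume $\{g : f(g) \geq C_1\} \subseteq G^+ \subseteq \{g : f(g) \geq 0\}$. Given $g \in G^+$ and $h \in G$, the point is to beat the defect by passing to powers. Since $G^+$ is conjugation invariant, each conjugate $h^k g h^{-k}$ lies in $G^+$, and the telescoping identity $\prod_{k=0}^{n-1} h^k g h^{-k} = (gh)^n h^{-n}$ shows $(gh)^n h^{-n} \in G^+ \subseteq \{g : f(g) \geq 0\}$. Estimating with the quasimorphism inequality and homogeneity, $0 \leq f((gh)^n h^{-n}) \leq f((gh)^n) + f(h^{-n}) + D(f) = n\bigl(f(gh) - f(h)\bigr) + D(f)$, whence $f(gh) - f(h) \geq -D(f)/n$; letting $n \to \infty$ yields $f(gh) \geq f(h)$, i.e. $g \in S$. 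I expect this telescoping-plus-homogenization step to be the main obstacle, since it is exactly what converts the defect-bounded single inequality into a sharp one. Combining maximality with the fact that $S$ itself is sandwiched identifies $S = G^+_{\max}$ as the unique maximal element of $\mathcal{M}_f(G)$.
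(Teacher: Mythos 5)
Your proof is correct and takes essentially the same route as the paper's: the same sandwich bounds $f(g)-D(f)\leq \tau(g)\leq f(g)$, pointedness from aperiodicity via the normal subgroup $S\cap S^{-1}$, and maximality by passing to powers of $(gh)^nh^{-n}$ and letting $n\to\infty$. Your telescoping identity $\prod_{k=0}^{n-1}h^kgh^{-k}=(gh)^nh^{-n}$ merely inlines the paper's appeal to Lemma~\ref{MultiplyInequalities}, which is itself proved by exactly that conjugation-and-multiply argument.
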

\begin{proof} We may assume $G \neq \{e\}$. We then have to show that $G^+_{\max}$ is the unique maximal element of
$\mathcal M_f(G)$ with respect to inclusion. For this let $G^+ \in \mathcal M_f(G)$. By Lemma \ref{NoUpperBound} we may assume that \eqref{sw} is satisfied with $C_2 := 0$. Now let $g \in G^+$ and $h \in G$ arbitrary. Then $gh \geq h$, whence $(gh)^n \geq h^n$ for all $n \in \mathbb N$ by Lemma \ref{MultiplyInequalities}. Since
\[G^+ \subset \{g \in G\,|\, f(g) \geq 0\}\]
we deduce that
\[f((gh)^nh^{-n}) \geq 0,\]
whence
\[f(gh) - f(h) + \frac{D(f)}{n} \geq 0.\]
For $n \to \infty$ we obtain $g \in G^+_{\max}$ and thus $G^+ \subset G^+_{\max}$. It remains to show that $G^+_{\max} \in \mathcal M_f(G)$. If $g_1, g_2 \in G^+_{\max}$ then for all $h \in G$ we have
\[f(g_1g_2h) \geq f(g_2h) \geq f(h), \]
whence $g_1g_2 \in G^+_{\max}$, while $e \in G^+_{\max}$ is obvious. Finally, we recall that every homogeneous quasimorphism is conjugation-invariant, hence for all $g \in  G^+_{\max}$ and $h, k \in G$ we obtain
\[f(kgk^{-1}h) = f(gk^{-1}hk) \geq f(k^{-1}hk) = f(h).\]
We now claim that $G^+_{\max}$ is pointed. Otherwise, $H := G^+_{\max} \cap (G^+_{\max})^{-1}$ is a non-trivial normal subgroup of $G$. Moreover, for $g \in H$ we have
\[g \in G^+_{\max} \Rightarrow f(g) = f(ge) \geq f(e) = 0\]
and similarly
\[g \in (G^+_{\max})^{-1} \Rightarrow g^{-1} \in  G^+_{\max} \Rightarrow -f(g) = f(g^{-1}) = f(g^{-1}e) \geq f(e) = 0,\]
whence $f(g) = 0$. Thus $f|_H \equiv 0$, contradicting the aperiodicity of $f$. Finally, let us show that $f$ sandwiches $G^+_{\max}$: If $f(g) \geq D(f)$, then for all $h \in G$ we have
\[f(gh) \geq f(g) + f(h) - D(f) \geq f(h)\]
showing that $g \in G^+_{\max}$. By Lemma \ref{NoUpperBound} this suffices to finish the proof.
\end{proof}
\begin{remark}
The definition of $G^+_{\max}$ looks asymmetric on the first sight: One could as well ask for the condition $f(hg) \geq f(h)$ for all $h \in G$. However, due to the conjugation-invariance of
$f$ we have
\[\forall h \in G: f(gh) \geq f(h) \Leftrightarrow  \forall h \in G: f(hg) \geq f(h).\]
Therefore it is enough to demand one of the two conditions here.
\end{remark}
We refer to the order defined in Proposition \ref{CalegariOrder} as the \emph{maximal order} associated to the aperiodic quasimorphism $f$. It is easy to deduce from either the explicit formula or the abstract maximality property that maximal orders are closed. Here maximality has to be understood in the following sense:
\begin{proposition}\label{PointedMaximality} Let $\leq$ be the maximal order associated with some aperiodic nonzero $f \in \mathcal{HQM}(G)$. Then the associated order semigroup $G^{+}_{\max}$ is a maximal pointed conjugation-invariant subsemigroup of $G$, i.e. there is no pointed conjugation-invariant semigroup $S$ with
\[G^+_{\max} \subsetneq S \subsetneq G.\]
\end{proposition}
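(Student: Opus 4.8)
The plan is to argue by contradiction. Suppose $S$ is a pointed conjugation-invariant subsemigroup with $G^+_{\max} \subsetneq S$. Since $G \neq \{e\}$ is itself never pointed, allowing $S = G$ would change nothing, so it suffices to contradict pointedness of such an $S$ — equivalently, to manufacture, using only the semigroup and conjugation operations applied to elements already in $S$, an element $s \in S$ with $s \neq e$ and $s^{-1} \in S$ as well.

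First I would unpack non-membership. Pick $g_0 \in S \setminus G^+_{\max}$. By the explicit formula \eqref{GMax} there is a witness $h_0 \in G$ with $f(g_0 h_0) < f(h_0)$; set $c := f(h_0) - f(g_0 h_0) > 0$. The core of the argument is to amplify this single strict inequality into an unbounded deficit. To that end I would consider the conjugates $c_k := h_0^{-k} g_0 h_0^{k}$, each of which lies in $S$ by conjugation-invariance, and form the product $Q_n := c_n c_{n-1}\cdots c_0 \in S$. A short induction gives the telescoping identity
\[ Q_n = h_0^{-n}(g_0 h_0)^n\, g_0 . \]
Applying the quasimorphism inequality twice, together with homogeneity (so that $f(h_0^{-n}) = -n f(h_0)$ and $f((g_0h_0)^n) = n f(g_0h_0)$), yields
\[ f(Q_n) \le f(h_0^{-n}) + f\big((g_0h_0)^n\big) + f(g_0) + 2D(f) = -nc + f(g_0) + 2D(f) , \]
so $f(Q_n) \to -\infty$; in particular $f(Q_n) < 0$, whence $Q_n \neq e$, for $n$ large.

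To close, I would exploit $f(Q_n^{-1}) = -f(Q_n) \ge nc - f(g_0) - 2D(f)$, which exceeds $D(f)$ once $n$ is large enough. As already established inside the proof of Proposition \ref{CalegariOrder}, $f(g) \ge D(f)$ forces $g \in G^+_{\max}$; hence $Q_n^{-1} \in G^+_{\max} \subseteq S$. For such $n$ we then have $Q_n \in S$, $Q_n^{-1} \in S$ and $Q_n \neq e$, contradicting pointedness of $S$, and the maximality assertion follows.

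The main obstacle, and the only genuinely nonroutine step, is the passage from a single point of failure $f(g_0 h_0) < f(h_0)$ to an element of arbitrarily negative quasimorphism value: the telescoping product $Q_n$ is precisely what converts the fixed deficit $c$ into a linearly growing one, and its membership in $S$ rests only on conjugation-invariance and the semigroup property. Once this element is in hand, everything else reduces to the elementary observation that a very positive $f$-value pushes an element into $G^+_{\max}$, producing the forbidden inverse pair.
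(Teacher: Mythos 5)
Your proof is correct, and it takes a genuinely different route from the paper's. The paper disposes of the proposition in one line: since $\{g \in G \mid f(g) \geq C\} \subseteq G^+_{\max} \subseteq S$ for a suitable constant $C > 0$, Lemma \ref{NoUpperBound} shows that any pointed conjugation-invariant semigroup $S \supsetneq G^+_{\max}$ lies in $\mathcal M_f(G)$, and the maximality of $G^+_{\max}$ in $\mathcal M_f(G)$ (Proposition \ref{CalegariOrder}) then forces $S \subseteq G^+_{\max}$, a contradiction. You instead manufacture the contradiction by hand: I have checked the telescoping identity $Q_n = c_n \cdots c_0 = h_0^{-n}(g_0h_0)^n g_0$, the estimate $f(Q_n) \leq -nc + f(g_0) + 2D(f)$ (two applications of the defect bound plus homogeneity), and the implication $f(g) \geq D(f) \Rightarrow g \in G^+_{\max}$, and all are valid; membership $Q_n \in S$ indeed uses only conjugation invariance and the semigroup property, so $Q_n \neq e$ together with $Q_n^{-1} \in G^+_{\max} \subseteq S$ kills pointedness. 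What your route buys: it is self-contained modulo the explicit formula \eqref{GMax}, bypassing both Lemma \ref{NoUpperBound} (which the paper imports from \cite{BSH} without proof) and the abstract maximality statement; moreover the telescoping product is genuinely needed in your setup, since $g_0 \notin G^+_{\max}$ only provides a local deficit at one witness $h_0$ and does not imply $f(g_0) < 0$, so the conjugate product is exactly the device that converts a pointwise failure into an element of $S$ of arbitrarily negative $f$-value. What the paper's route buys is brevity and reuse: once the sandwich formalism and the maximality of $G^+_{\max}$ in $\mathcal M_f(G)$ are in place, the proposition is a formal corollary, whereas your argument in effect re-proves a special case of that maximality (compare the amplification step in the proof of Proposition \ref{CalegariOrder}, which multiplies inequalities via Lemma \ref{MultiplyInequalities} and passes to the limit, achieving the same growth that your conjugate product achieves algebraically).
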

\begin{proof} In view of Lemma \ref{NoUpperBound} every pointed semigroup $S$ as above is contained in $\mathcal M_f(G)$, hence the result follows.
\end{proof}
In many special cases, such as simple Lie groups, maximal orders satisfy a much stronger maximality condition; we will return to this question below. In any case, we can now formulate the main problem to be discussed in this article:
\begin{problem*}\label{MainProblem}
Given a topological group $G$ and an aperiodic homogeneous quasimorphism $f$ on $G$, determine the associated maximal order $G^+_{\max}$ explicitly.
\end{problem*}
We will discuss this question for finite-dimensional Lie groups below, but before we can do so we need to introduce some general tools.

\subsection{Maximal dominant sets}
We recall that the \emph{dominant set} of a bi-invariant partial order $\leq$ on a topological group $G$ is the subsemigroup of the order semigroup $G^+$ given by the formula
\[G^{++} := \{h \in G^+ \setminus\{e\}\,|\, \forall g \in G\,\exists n \in \N:h^n \geq g\}.\]
This notion was introduced in \cite{EP}. If $\leq$ is sandwiched by a non-zero quasimorphism $f$ on $G$, then this is always non-empty.
For connected groups $G$ the dominant set is related to the interior of the order semigroup:
\begin{lemma}\label{TrivialTopologicalStuff}
Let $G$ be a connected topological group, $f \in \mathcal{HQM}(G) \setminus\{0\}$ and $\leq \in \mathcal{IPO}_f(G)$. Denote by $G^+$ respectively $G^{++}$ the order semigroup and dominant set of some bi-invariant partial order on $G$. Then the following hold:
\begin{itemize}
\item[(i)] ${\rm Int}(G^+) \subset G^{++}$.
\item[(ii)] There exists a constant $C>0$ such that for all $g \in G$
\[f(g) > C \Rightarrow g \in {\rm Int}(G^+).\]
\end{itemize}
\end{lemma}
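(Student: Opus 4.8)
My plan is to dispatch the two parts separately, treating (ii) as an immediate consequence of continuity together with the sandwich condition, and (i) as a short argument combining the openness of ${\rm Int}(G^+)$, the connectedness of $G$, and the multiplicativity of bi-invariant orders. For (ii) I would use only the left inclusion of the sandwich condition \eqref{sw}, namely that there is a constant $C_1$ with $\{g \in G \mid f(g) \geq C_1\} \subset G^+$. Since $f$ is continuous, the set $\{g \in G \mid f(g) > C_1\}$ is open and contained in $G^+$, hence contained in ${\rm Int}(G^+)$. Setting $C := \max\{C_1, 1\} > 0$ then gives the claim, since $f(g) > C$ forces $f(g) > C_1$ and therefore $g \in {\rm Int}(G^+)$; no quasimorphism estimates are needed.

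For (i), fix $h \in {\rm Int}(G^+)$. The key move is to convert membership in the interior into a uniform inequality: I would choose a symmetric open neighbourhood $V$ of $e$ with $hV \subset G^+$ (possible since $h^{-1}{\rm Int}(G^+)$ is an open neighbourhood of $e$). Then for every $v \in V$ we have $hv \geq e$, and right-multiplying by $v^{-1}$ yields $h \geq v^{-1}$; as $V$ is symmetric this says $w \leq h$ for all $w \in V$. Now I invoke connectedness of $G$: a symmetric open neighbourhood of $e$ generates $G$, so any $g \in G$ can be written as a finite product $g = v_1 \cdots v_m$ with $v_i \in V$. Applying Lemma \ref{MultiplyInequalities} repeatedly to the inequalities $v_i \leq h$ gives $g = v_1 \cdots v_m \leq h^m$, i.e. $h^m \geq g$. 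This is precisely the dominance condition, with the exponent $n = m$ allowed to depend on $g$ through its word length in $V$.

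It remains to check $h \neq e$, which is where pointedness enters. If $e$ lay in ${\rm Int}(G^+)$, then $G^+$ would contain a symmetric open neighbourhood of $e$; by the same connectedness argument every element of $G$ is a finite product of elements of that neighbourhood, so the semigroup $G^+$ would equal $G$, contradicting $G^+ \cap (G^+)^{-1} = \{e\}$ (note $G \neq \{e\}$ because $f \neq 0$). Hence every $h \in {\rm Int}(G^+)$ is distinct from $e$, and combined with the previous paragraph lies in $G^{++}$.

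Consistent with the name of the lemma, I expect no serious obstacle. The only points requiring care are bookkeeping the direction of the bi-invariant multiplication, so that the elements of $V$ end up below $h$ rather than merely above $h^{-1}$, and recording that the power in the dominance condition depends on $g$. It is worth remarking that (i) in fact uses neither $f$ nor the sandwich condition directly, relying only on connectedness, pointedness, and Lemma \ref{MultiplyInequalities}, while (ii) is the only place where the hypothesis $\leq \in \mathcal{IPO}_f(G)$ is genuinely needed.
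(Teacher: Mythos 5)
Your proof is correct, and both halves take a genuinely different route from the paper's. For (ii), the paper does not use the open set $\{g \in G \mid f(g) > C_1\}$ at all; instead it proves the stronger statement that $f$ is bounded on $\partial G^+$ by $2C_0$ (if a boundary point $g$ had $f(g) > 2C_0$, approximating $g$ by elements of $G \setminus G^+$ and using continuity would force those elements into $G^+$, a contradiction), and then deduces (ii) from the sandwich condition. Your one-line observation --- that $\{f > C_1\}$ is open by continuity and contained in $G^+$, hence in ${\rm Int}(G^+)$ --- is simpler, suffices for the stated claim, and quietly sidesteps the sequence-versus-net issue that the paper's approximation argument has in a general (not first-countable) topological group; what the paper's version buys is the extra information about boundedness of $f$ on the boundary. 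For (i), the paper uses an argument it attributes to K.-H.\ Hofmann: for $g \in {\rm Int}(G^+)$ the semigroup generated by $G^+$ and $g^{-1}$ contains an identity neighbourhood, hence equals $G$ by connectedness, and conjugation-invariance of $G^+$ is then used to normalize any $h \in G$ as $h = g^{-n}h_+$ with $h_+ \in G^+$, giving $g^n \geq h^{-1}$. You instead dominate a symmetric identity neighbourhood $V$ by $h$ via bi-invariance ($hv \geq e \Rightarrow v^{-1} \leq h$) and then use word length in $V$ together with Lemma \ref{MultiplyInequalities} to get $g = v_1 \cdots v_m \leq h^m$; connectedness enters both arguments in the same way (an identity neighbourhood generates a connected group), but you trade the paper's use of conjugation-invariance for the multiplicativity lemma. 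A small bonus of your write-up is that you verify explicitly that $e \notin {\rm Int}(G^+)$, which is genuinely needed since the dominant set $G^{++}$ excludes $e$ by definition --- a point the paper's proof passes over in silence.
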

\begin{proof} (i) Let $g \in {\rm Int}(G^+)$ and let $U \subset G$ be
open with $g \in U \subset G^+$. Then the semigroup generated by $G^+$ and $g^{-1}$ contains an open identity neighborhood, hence coincides with $G$, since $G$ is connected. Using the conjugation-invariance of $G^+$ this implies that every $h \in G$ may be written as $h = g^{-n}h_+$ for some $h_+\in G^+$ and $n \in \mathbb N$. We then have $g^nh = h_+ \geq e$; since $h\in G$ was arbitary, this implies that $g$ is dominant. (We learned this argument from K.-H. Hofmann.) (ii) In view of the sandwich condition it suffices to show that $f|_{\partial G^+}$ is bounded. For
this we argue as follows: Let $C_0$ be a sandwich constant so that $\{g \in G\,|\, f(g) \geq C_0\} \subset G^+$. We claim that $f|_{\partial G^+}$ is bounded by $C := 2C_0$.  Indeed,
suppose $g \in \partial G^+$ with $f(g) > 2C_0$. We then find a sequence of elements
$g_n \in G \setminus G^+$ with $g_n \to g$. In particular, $f(g_m) \geq C_0$ for $m$ sufficiently large, whence $g_m \in G^+$, a contradiction.
\end{proof}
We denote by $\mathcal D_f(G)$ the collection of dominant sets of the elements of $\mathcal M_f(G)$. We aim to describe $\mathcal D_f(G)$ in more intrinsic terms. The key observation allowing for such a description is as follows:
\begin{lemma}\label{DominantsFromOSG} Let $G$ be a group, $f \in \mathcal{HQM}(G) \setminus \{0\}$ and $G^+ \in \mathcal M_f(G)$. Then the dominant set $G^{++}$ of $G^+$ is given by
\[G^{++} =  \{g \in G^+\,|\,f(g) > 0\}.\]
\end{lemma}
\begin{proof} Let $g \in G^{++}$. By Lemma \ref{NoUpperBound} there exists $C_1 > 0$ such that
\begin{eqnarray}\label{SandwichG1}\{g \in G\,|\, f(g) \geq C_1\} \subset G^+ \subset \{g \in G\,|\, f(g) \geq 0\},\end{eqnarray}
whence $f(g) \geq 0$. Assume $f(g) = 0$ for contradiction and observe that $f$ as a nonzero homogeneous quasimorphism is unbounded. Choose $\epsilon > 0$ and $h \in G$ with $f(h) \leq -D(f) - \epsilon$. Thus for all $n \in \mathbb Z$,
\[f(g^nh) \leq nf(g) + f(h) + D(f) \leq - \epsilon < 0,\]
whence $g^nh \not \in G^+$, i.e. $g^n \not \geq h^{-1}$, showing that $g$ is not dominant. This contradiction shows $f(g) > 0$, which yields the inclusion $\subseteq$. Conversely, if  $g \in G^+$ satisfies $f(g) > 0$
then given any $h \in G$ we find $m \in \mathbb N$ such that $mf(g) \geq f(h) + D(f)+C_1$, where $C_1$ is as in \eqref{SandwichG1}. With $m$ chosen in this way we have $f(g^mh^{-1}) \geq C_1$, hence $g^m \geq h$ by \eqref{SandwichG1}. This shows $g \in G^{++}$.
\end{proof}
From this we deduce:
\begin{corollary}\label{DominantsChara} Let $f \in \mathcal{HQM}(G) \setminus \{0\}$. Then a subset $G^{(++)}$ is contained in $\mathcal D_f(G)$ iff it satisfies the following three conditions:
\begin{itemize}
\item[(D1)] $G^{(++)}$ is a conjugation-invariant semigroup.
\item[(D2)] $f|_{G^{(++)}} > 0$.
\item[(D3)] $\exists C > 0\; \forall g \in G: f(g) \geq C \Rightarrow g \in G^{(++)}$
\end{itemize}
\end{corollary}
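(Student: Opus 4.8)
The plan is to prove the two implications separately, with the entire argument resting on the explicit description of dominant sets furnished by Lemma \ref{DominantsFromOSG}: for every $G^+ \in \mathcal{M}_f(G)$ one has $G^{++} = \{g \in G^+ \mid f(g) > 0\}$. So the corollary is essentially a matter of reading off (D1)--(D3) from this formula in one direction, and reconstructing a suitable order semigroup in the other.

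For the necessity of (D1)--(D3), I would assume $G^{(++)} = G^{++}$ is the dominant set of some $G^+ \in \mathcal{M}_f(G)$. Then (D1) is immediate: the dominant set is a subsemigroup of $G^+$ by definition, and it is conjugation-invariant because, by Lemma \ref{DominantsFromOSG}, it equals $\{g \in G^+ \mid f(g) > 0\}$, where both $G^+$ and the homogeneous quasimorphism $f$ are conjugation-invariant. The very same formula yields (D2) at once. For (D3), Lemma \ref{NoUpperBound} provides a constant $C_1 > 0$ with $\{g \mid f(g) \geq C_1\} \subset G^+$; any such $g$ then satisfies $f(g) \geq C_1 > 0$ and hence lies in $G^{++}$, so (D3) holds with $C := C_1$.

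For the converse I would take $G^{(++)}$ satisfying (D1)--(D3) and set $G^+ := G^{(++)} \cup \{e\}$, then verify $G^+ \in \mathcal{M}_f(G)$. By (D1) it is a conjugation-invariant submonoid. It is pointed: were $g$ and $g^{-1}$ both in $G^{(++)}$, then (D2) together with the homogeneity relation $f(g^{-1}) = -f(g)$ would force $f(g) > 0$ and $-f(g) > 0$ simultaneously, which is absurd; hence $G^+ \cap (G^+)^{-1} = \{e\}$. Thus $G^+$ is the order semigroup of a bi-invariant partial order. By (D3) we have $\{g \mid f(g) \geq C\} \subset G^{(++)} \subset G^+$ for some $C > 0$, so Lemma \ref{NoUpperBound} shows this order is sandwiched by $f$, i.e.\ $G^+ \in \mathcal{M}_f(G)$. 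Finally, Lemma \ref{DominantsFromOSG} computes the dominant set of $G^+$ as $\{g \in G^+ \mid f(g) > 0\}$; since $f > 0$ on $G^{(++)}$ by (D2) while $f(e) = 0$, this set is exactly $G^{(++)}$, so $G^{(++)} \in \mathcal{D}_f(G)$.

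I expect no genuine obstacle here beyond careful bookkeeping; the two points that deserve attention are that (D2) forces $e \notin G^{(++)}$, so that adjoining $e$ recovers precisely the dominant set rather than secretly enlarging it, and that pointedness of the reconstructed semigroup is exactly the reflection of (D2) under $f(g^{-1}) = -f(g)$. Everything else is a direct application of Lemmas \ref{NoUpperBound} and \ref{DominantsFromOSG}.
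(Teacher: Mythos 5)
Your proposal is correct and follows essentially the same route as the paper's own proof: both directions reduce to the formula $G^{++} = \{g \in G^+ \mid f(g) > 0\}$ from Lemma \ref{DominantsFromOSG}, with the converse reconstructing $G^+ := G^{(++)} \cup \{e\}$ and checking pointedness via (D2) and sandwiching via (D3) and Lemma \ref{NoUpperBound}. The only (harmless) quibble is that for (D3) you invoke Lemma \ref{NoUpperBound} to get a positive constant, whereas one can simply enlarge the sandwich constant $C_1$ to make it positive; this does not affect correctness.
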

\begin{proof} First suppose, (D1)-(D3) are satisfied. We observe that (D2) implies in particular $G^{(++)} \cap (G^{(++)})^{-1} = \emptyset$. Combining this with (D1) we see that $G^+ := \{e\} \cup G^{(++)}$ is a pointed, conjugation-invariant monoid, hence the order semigroup of some bi-invariant partial order $\leq$, which by (D3) is sandwiched by $f$. Now Lemma
\ref{DominantsFromOSG} and (D2) imply that the set of dominants of $G^+$ is precisely $G^{(++)}$.\\
Conversely, supposse $G^{(++)}$ is the set of dominants for some partial order $\leq$ sandwiched by $f$ with order semigroup $G^+$. Then $G^+$ is conjugation invariant and since
$f$ is conjugation-invariant, (D1) follows from Lemma \ref{DominantsFromOSG}. The same lemma also yields (D2) immediately. Finally, (D3) follows
from the fact that $f$ sandwiches $\leq$ together with Lemma \ref{DominantsFromOSG}.
\end{proof}
Another consequence of Lemma \ref{DominantsFromOSG} is the following:
\begin{corollary}\label{DomIncl}If $G^+_1, G^+_2 \in \mathcal M_f(G)$ such that $G^+_1 \subset G^+_2$ then the associated sets of dominants $G^{++}_1, G^{++}_2 \in \mathcal D_f(G)$ satisfy
\[G^{++}_1 \subset G^{++}_2.\]
\end{corollary}
From this we deduce:
\begin{corollary} Let $f \in \mathcal{HQM}(G)$ be any nonzero homogeneous quasimorphism (not necessarily aperiodic). Then $\mathcal D_f(G)$ contains a unique maximal element $G^{++}_{\max}$. If $f$ is aperiodic, then $G^{++}_{\max}$ is the dominant set of $G^{+}_{\max}$.
\end{corollary}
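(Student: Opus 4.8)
The plan is to exhibit an explicit candidate for the maximal element and then show, using the characterization of dominant sets already at hand, that it contains every member of $\mathcal D_f(G)$. The naive guess $\{g \in G \mid f(g) > 0\}$ fails condition (D1): two elements with small positive $f$-value can have a product with negative $f$-value, since $f(g_1 g_2)$ may fall short of $f(g_1)+f(g_2)$ by as much as $D(f)$. The correct candidate is
\[G^{++}_{\max} := \{g \in G \mid f(g) > 0 \text{ and } \forall h \in G: f(gh) \geq f(h)\},\]
that is, the intersection of the region $\{f > 0\}$ with the (not necessarily pointed) maximal semigroup $\{g \mid \forall h: f(gh) \geq f(h)\}$ already met in Proposition \ref{CalegariOrder}.

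First I would verify that $G^{++}_{\max}$ satisfies (D1)--(D3) of Corollary \ref{DominantsChara}, so that it lies in $\mathcal D_f(G)$. Conditions (D2) and (D3) are immediate: (D2) holds by definition, while for (D3) one checks that $f(g) \geq D(f)+1$ forces both $f(g) > 0$ and, via $f(gh) \geq f(g) + f(h) - D(f)$, the inequality $f(gh) > f(h)$ for all $h$, so $C := D(f)+1$ works. The content is (D1). Conjugation-invariance follows from that of $f$, exactly as in the proof of Proposition \ref{CalegariOrder}. For the semigroup property, if $g_1, g_2 \in G^{++}_{\max}$ then applying the defining inequality of $g_1$ to $g_2$ gives $f(g_1 g_2) \geq f(g_2) > 0$, while for arbitrary $h$ one has $f(g_1 g_2 h) \geq f(g_2 h) \geq f(h)$; hence $g_1 g_2 \in G^{++}_{\max}$.

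Next I would show that $G^{++}_{\max}$ contains every element of $\mathcal D_f(G)$, which makes it the maximum and hence the unique maximal element. Given $G^{(++)} \in \mathcal D_f(G)$, Corollary \ref{DominantsChara} produces a sandwiched order with order semigroup $G^+ = \{e\} \cup G^{(++)}$, which by Lemma \ref{NoUpperBound} we may take to satisfy the sandwich condition with $C_2 = 0$. For $g \in G^{(++)}$ we already have $f(g) > 0$ by (D2), so it remains to establish the maximal-semigroup condition, and here I re-run the argument of Proposition \ref{CalegariOrder}: conjugation-invariance of $G^+$ yields $gh \geq h$, Lemma \ref{MultiplyInequalities} upgrades this to $(gh)^n \geq h^n$, the sandwich with $C_2 = 0$ gives $f((gh)^n h^{-n}) \geq 0$, and homogeneity together with the defect bound yields $f(gh) - f(h) \geq -D(f)/n$; letting $n \to \infty$ shows $f(gh) \geq f(h)$ for all $h$. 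Thus $g \in G^{++}_{\max}$, so $G^{(++)} \subset G^{++}_{\max}$.

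Finally, for aperiodic $f$, Proposition \ref{CalegariOrder} identifies $\{g \mid \forall h: f(gh) \geq f(h)\}$ with the order semigroup $G^+_{\max}$ of \eqref{GMax}, and Lemma \ref{DominantsFromOSG} computes its dominant set as $\{g \in G^+_{\max} \mid f(g) > 0\}$, which is precisely $G^{++}_{\max}$; this proves the second assertion. The only point demanding care is the non-aperiodic case: there the maximal semigroup is not pointed, so one cannot expect $\mathcal M_f(G)$ to possess a maximum, and the whole argument must be carried out on the level of dominant sets rather than order semigroups. This is exactly why intersecting with $\{f > 0\}$ is essential, and why Corollary \ref{DominantsChara} rather than Proposition \ref{CalegariOrder} is the decisive tool; one could alternatively reduce to the aperiodic situation through the factorization of Proposition \ref{Factorization1}, but the direct construction above avoids transporting dominant sets along $p$.
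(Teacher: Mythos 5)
Your proof is correct, and it takes a genuinely different route from the paper's. The paper disposes of the corollary in two lines: in the aperiodic case it invokes the existence of the maximum $G^+_{\max} \in \mathcal M_f(G)$ (Proposition \ref{CalegariOrder}) together with Corollary \ref{DomIncl}, which shows the dominant set of $G^+_{\max}$ contains every element of $\mathcal D_f(G)$; the general case is then reduced to the aperiodic one via the factorization $f = f_0 \circ p$ of Proposition \ref{Factorization1} --- a reduction that tacitly requires transporting dominant sets along $p$ (via Corollary \ref{DominantsChara} one checks that $\mathcal D_f(G)$ consists exactly of the preimages $p^{-1}(\hat S)$ for $\hat S \in \mathcal D_{f_0}(\hat G)$, so the maximal element downstairs is the preimage of the maximal element upstairs). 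You instead give a direct, uniform construction: the explicit candidate $\{g \in G \mid f(g) > 0,\ \forall h \in G: f(gh) \geq f(h)\}$, verified against (D1)--(D3), and shown to contain every member of $\mathcal D_f(G)$ by re-running the telescoping estimate from the proof of Proposition \ref{CalegariOrder}. Your key structural observation is sound and worth making explicit: aperiodicity enters the proof of Proposition \ref{CalegariOrder} only to establish pointedness of $G^+_{\max}$, whereas at the level of dominant sets pointedness is automatic from (D2) (an element and its inverse cannot both have positive $f$-value), so the containment argument survives without aperiodicity. What your route buys is an explicit formula for $G^{++}_{\max}$ valid for arbitrary nonzero $f$ --- the paper records this formula only in the aperiodic case, immediately after the corollary --- and it avoids the bookkeeping of the reduction along $p$, which the paper leaves entirely to the reader; what it costs is the page of direct verification. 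One trivial slip in wording: the inequality $gh \geq h$ for $g \in G^+$ follows from right-invariance of the order (since $(gh)h^{-1} = g \in G^+$), not from conjugation-invariance; this does not affect the argument. Your final identification of the candidate with the dominant set of $G^+_{\max}$ in the aperiodic case, via \eqref{GMax} and Lemma \ref{DominantsFromOSG}, is exactly the paper's point of view.
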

\begin{proof} In the aperiodic case this follows from Proposition \ref{CalegariOrder} and Corollary \ref{DomIncl}. The general case is reduced to this by means of Proposition \ref{Factorization1}.
\end{proof}
From now on we will assume $G \neq \{e\}$, so that in particular every aperiodic quasimorphism is nonzero. For $f$ aperiodic we then have
\[G^{++}_{\max} = \{g \in G \,|\, \forall h \in G: f(gh) \geq f(h), f(g) > 0\}.\]
Conversely, we can recover $G^+_{\max}$ from its dominant set:
\begin{proposition}\label{MaximalFromDom} If $f \in \mathcal{HQM}(G)$ is nonzero and aperiodic then the unique maximal element $G^+_{\max} \in \mathcal M_f(G)$ is given by.
\begin{eqnarray*}G^+_{\max} = \{g \in G\,|\,
gG^{++}_{\max} \subset G^{++}_{\max} \}.\end{eqnarray*}
\end{proposition}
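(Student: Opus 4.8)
The plan is to prove the two inclusions separately, writing $S := \{g \in G \mid g\,G^{++}_{\max} \subset G^{++}_{\max}\}$ for the right-hand side. The inclusion $G^+_{\max} \subset S$ is immediate: given $g \in G^+_{\max}$ and $d \in G^{++}_{\max}$, note that $d \in G^+_{\max}$, so $gd \in G^+_{\max}$ because $G^+_{\max}$ is a semigroup; moreover the defining property of $G^+_{\max}$ applied with $h = d$ gives $f(gd) \geq f(d) > 0$. Hence $gd \in G^{++}_{\max}$, and since $d$ was arbitrary, $g \in S$.

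For the reverse inclusion I would not verify the inequality $f(gh) \geq f(h)$ for $g \in S$ by hand; instead I would invoke the abstract maximality of $G^+_{\max}$ from Proposition \ref{CalegariOrder}. It therefore suffices to show $S \in \mathcal M_f(G)$, i.e.\ that $S$ is a pointed, conjugation-invariant submonoid sandwiched by $f$; maximality then forces $S \subset G^+_{\max}$. That $S$ is a submonoid containing $e$ is clear, and if $g_1,g_2 \in S$ then $g_1g_2\,G^{++}_{\max} \subset g_1\,G^{++}_{\max} \subset G^{++}_{\max}$. Conjugation-invariance of $S$ reduces to that of $G^{++}_{\max}$, which holds because $G^{++}_{\max} = \{g \in G^+_{\max} \mid f(g) > 0\}$ by Lemma \ref{DominantsFromOSG} while both $G^+_{\max}$ and $f$ are conjugation-invariant; writing an arbitrary $d \in G^{++}_{\max}$ as $d = k d' k^{-1}$ with $d' \in G^{++}_{\max}$ gives $kgk^{-1} d = k(gd')k^{-1} \in G^{++}_{\max}$. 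For the sandwich property I would use condition (D3) from Corollary \ref{DominantsChara}: there is $C > 0$ with $\{g \mid f(g) \geq C\} \subset G^{++}_{\max}$, and then for $f(g) \geq C + D(f)$ and any $d \in G^{++}_{\max}$ one has $f(gd) \geq f(g) + f(d) - D(f) \geq C$, so $gd \in G^{++}_{\max}$; thus $\{g \mid f(g) \geq C + D(f)\} \subset S$, and Lemma \ref{NoUpperBound} yields that $f$ sandwiches $S$.

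The main obstacle is pointedness of $S$, and this is exactly where aperiodicity must enter. I would argue that $H := S \cap S^{-1}$ is a normal subgroup (it is conjugation-invariant and closed under inverses) on which $f$ vanishes: fixing any $d \in G^{++}_{\max}$, for $g \in H$ all powers $g^n d$ with $n \in \Z$ lie in $G^{++}_{\max}$, so $f(g^n d) > 0$ for every $n$; combined with the quasimorphism estimate $|f(g^n d) - n f(g) - f(d)| \leq D(f)$ and the homogeneity $f(g^n)=nf(g)$, this is only possible if $f(g) = 0$. Hence $H$ is a period group, so $H = \{e\}$ by aperiodicity, and $S$ is pointed.

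This completes the verification that $S \in \mathcal M_f(G)$, whence $S = G^+_{\max}$. The one delicate point of organization is that pointedness must be established before invoking Lemma \ref{NoUpperBound}, which presupposes an order semigroup; so I would order the steps as submonoid, conjugation-invariance, pointedness (via aperiodicity), and finally the sandwich property. Note that $G^{++}_{\max} \neq \emptyset$ throughout, since $f$ is a nonzero homogeneous quasimorphism and hence unbounded, so that (D3) already forces $G^{++}_{\max}$ to be nonempty; this rules out the degenerate case $S = G$.
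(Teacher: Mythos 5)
Your proposal is correct, and its core coincides with the paper's: both proofs consist mainly of verifying that $S := \{g \in G \mid gG^{++}_{\max} \subset G^{++}_{\max}\}$ is a conjugation-invariant pointed submonoid sandwiched by $f$, with aperiodicity entering exactly where you place it, namely in showing that $f$ vanishes on the normal subgroup $H = S \cap S^{-1}$. Where you genuinely diverge is the endgame. The paper never checks the inclusion $G^+_{\max} \subseteq S$ directly; having shown $S \in \mathcal M_f(G)$, it proves $S$ is maximal by taking any $S' \in \mathcal M_f(G)$ with $S \subseteq S'$, deducing from Corollary \ref{DomIncl} that the dominant set of $S'$ contains, hence equals, $G^{++}_{\max}$, and then using that the dominant set is an \emph{ideal} in $S'$ to conclude $S' \subseteq S$; uniqueness of the maximal element then yields $S = G^+_{\max}$. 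You instead obtain $G^+_{\max} \subseteq S$ in one line from the explicit formula \eqref{GMax} together with Lemma \ref{DominantsFromOSG}, and the reverse inclusion from the greatest-element property of $G^+_{\max}$ established in Proposition \ref{CalegariOrder} (this is legitimate: the proof there shows every element of $\mathcal M_f(G)$ is contained in $G^+_{\max}$, not merely that $G^+_{\max}$ is maximal in the order-theoretic sense). Your route is a bit more economical, avoiding Corollary \ref{DomIncl} and the ideal observation, at the cost of leaning on the explicit formula; the paper's argument stays entirely within the dominant-set machinery it has just developed. Two further small differences, both in your favor or neutral: your pointedness computation using two-sided powers $g^nd$, $n \in \Z$, to force $f(g) = 0$ streamlines the paper's, which first proves $f \geq 0$ on all of $S$ via the estimate on $f(g^{mn}h^n)$; and your sandwich constant $C + D(f)$ is correct but avoidable, since $G^{++}_{\max}$ is itself a semigroup and therefore $G^{++}_{\max} \subseteq S$ holds directly, giving $\{g \mid f(g) \geq C\} \subseteq S$ at once --- which is precisely the shortcut the paper takes. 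Your remarks on the order of steps (pointedness before invoking Lemma \ref{NoUpperBound}) and on $G^{++}_{\max} \neq \emptyset$ are both sound.
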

\begin{proof}
The main part of the proof consists of showing that \[G^+ :=  \{g \in G\,|\,
gG^{++}_{\max} \subset G^{++}_{\max} \}\] is pointed. For this we first show that
\begin{eqnarray}\label{NonNegativef} g \in G^+ \Rightarrow f(g) \geq 0.\end{eqnarray}
Indeed, if $g \in G^+$ then for all $m, n \in \mathbb N$ we have $g^{mn}h^n \in G^{++}_{\max}$, whence
\[f(g^{mn}h^n) > 0 \Rightarrow mf(g) + f(h) + \frac{D(f)}{n} > 0 \Rightarrow f(g) \geq \frac{-f(h)}{m} \Rightarrow f(g) \geq 0.\]
This proves \eqref{NonNegativef} and shows in particular that $f$ vanishes on $H := G^+ \cap
(G^+)^{-1}$. However, since the latter is a normal subgroup of $G$ and $f$ is aperiodic, we obtain $H = \{e\}$, whence $G^+$ is pointed. Now we can show that  $G^+  \in \mathcal M_f(G)$: Firstly, $G^+$ is a semigroup, since $g,h \in G^+$ implies $ghG^{++}_{\max} \subset gG^{++}_{\max} \subset G^{++}_{\max}$, and conjugation invariant, since for $g \in G^+, h \in G$ and $x \in
G^{++}_{\max}$ we have $y := h^{-1}xh \in G^{++}_{\max}$ and hence
\[hgh^{-1}x = hgh^{-1}hyh^{-1} = hgyh^{-1} \in hG^{++}_{\max}h^{-1} \subset G^{++}_{\max}.\]
Since obviously $e \in G^+$, the latter is a conjugation-invariant pointed monoid. It remains to prove that $f$ sandwiches $G^+$. Now $G^{++}_{\max} \subset G^+$ since
$G^{++}_{\max}$ is a semigroup. Since $G^{++}_{\max} \in \mathcal D_f(G)$ we find $C> 0$ such that
\[\{g \in G\,|\, f(g) \geq C\} \subset G^{++}_{\max} \subset G^+.\]
By Lemma \ref{NoUpperBound} we thus obtain $G^+ \in \mathcal M_f(G)$. Now let $S \in \mathcal M_f(G)$ and assume $G^+ \subseteq S$. Then the dominant set $S^{++}$ of $S$ contains $G^{++}_{\max}$ by Corollary \ref{DomIncl}, hence coincides with $G^{++}_{\max}$. Then $S \subset G^+$ since $S^{++} = G^{++}_{\max}$ is an ideal in $S$. This shows maximality of $G^+$, hence $G^+ = G^+_{\max}$.
\end{proof}

\subsection{A first example: the translation number} One of the most classical quasimorphism is Poincar\`e's \emph{translation number} $T$ \cite{Po1, Po2}. It is also one of the most important quasimorphisms, not only from the point of view of the structure theory of general quasimorphisms \cite{BSH2}, but also in terms of applications. For instance, it is  one of the key tools in the modern theory of group actions on the circle \cite{Ghys1, Ghys2}. We will now present the solution of Problem \ref{MainProblem} in the case of the translation number. Thereby we hope to illustrate the usefulness of the theory of dominants developed in the last section. We start by recalling the definition of the translation number: Let $H_0$ denote the group of orientation-preserving homeomorphisms of the circle (equipped with the compact open topology) and let $H$ be its universal covering. Explicitly,
\[H = \{f \in {\rm Homeo}(\R)\,|\, f \text{ monotone}, f(x+1) = f(x)+1\}.\]
Then for any $x \in \R$ we have
\[T(g) = \lim_{n \to \infty}\frac{g^n.x-x}{n}.\]
From this description it is not easy to decide whether $T$ is continuous. In fact it is continuous, as follows for example from the alternative description in \cite{Ghys1, Ghys2} as lift of the rotation number (whose continuity was known to Poincar\'e). Given this fact it is quite easy to see from the above description that $T$ is a homogeneous quasimorphism on $H$ (see again \cite{Ghys1, Ghys2} or alternatively \cite{scl, BSH2} for details). We claim that $T$ is in fact aperiodic. Indeed, the central extension $p: H \to H_0$ is non-trivial (as an element of $H^2(H_0; \Z)$ it is given by the Euler class), $H_0$ is simple (see e.g. \cite[Thm. 4.3]{Ghys2}), and $T$ restricts to an injective homomorphism on the kernel of $p$ (by the explicit formula). Thus aperiodicity of $T$ follows from the following general lemma:
\begin{lemma}
Let $G_0$ be a simple group, and $0 \to \mathcal Z \hookrightarrow G \xto{p} G_0 \to \{e\}$ be a non-trivial central extension. Then every homogeneous quasimorphism $f$ on $G$, which restricts to an injective homomorphism on $\mathcal Z$, is aperiodic.
\end{lemma}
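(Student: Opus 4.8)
The plan is to argue by contradiction. Suppose $f$ admits a non-trivial period group, i.e.\ a normal subgroup $\{e\} \neq H \lhd G$ with $f|_H = 0$; I will derive a contradiction by playing the simplicity of $G_0$ against the non-triviality of the extension and the injectivity of $f|_{\mathcal Z}$. The two objects to keep track of are the intersection $H \cap \mathcal Z$ sitting inside the center and the image $p(H)$ sitting inside $G_0$.

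The decisive use of the hypothesis on $f$ is that $H \cap \mathcal Z = \{e\}$: any $x \in H \cap \mathcal Z$ has $f(x) = 0$ because $x \in H$, and then $x = e$ because $f|_{\mathcal Z}$ is injective. (This is in fact the only place the hypothesis enters; neither the quasimorphism property nor homogeneity is needed beyond injectivity on the center.) Next, since $H$ is normal in $G$ and $p$ is surjective, $p(H)$ is normal in $G_0$, so by simplicity $p(H) = \{e\}$ or $p(H) = G_0$. In the first case $H \subseteq \ker p = \mathcal Z$, whence $H = H \cap \mathcal Z = \{e\}$, contradicting the non-triviality of $H$.

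There remains the case $p(H) = G_0$, equivalently $H\mathcal Z = G$. Now $H$ and $\mathcal Z$ are both normal in $G$ (the latter because it is central), they generate $G$, and they intersect trivially, so $G = H \times \mathcal Z$ as an internal direct product. In particular $p|_H \colon H \to G_0$ is surjective with kernel $H \cap \mathcal Z = \{e\}$, hence an isomorphism, and $s := (p|_H)^{-1} \colon G_0 \to G$ is a homomorphic section of $p$. This splits the central extension, contradicting its non-triviality. Both cases being impossible, $f$ has no non-trivial period group and is therefore aperiodic.

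The argument is essentially formal; the only real decisions are to reduce to the dichotomy for $p(H)$ and to notice that the surjective case manufactures a splitting. The single point worth stating precisely is the reading of ``non-trivial'': I take it to mean that the extension does not split as a group extension, which is exactly what the section $s$ would contradict (and which, in the motivating case of $\mathrm{Homeo}^+(S^1)$, is guaranteed by the non-vanishing of the Euler class).
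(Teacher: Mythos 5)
Your proof is correct and follows essentially the same route as the paper's: you use injectivity of $f|_{\mathcal Z}$ to get $H \cap \mathcal Z = \{e\}$, invoke simplicity of $G_0$ for the dichotomy $p(H) = \{e\}$ or $p(H) = G_0$, and in the latter case produce a homomorphic section $(p|_H)^{-1}$ splitting the extension, contradicting non-triviality. The only difference is expository: you spell out the internal direct product $G = H \times \mathcal Z$ and the reading of ``non-trivial extension,'' which the paper leaves implicit.
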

\begin{proof} Assume that $N \lhd G$ is a period subgroup for $f$; then we have a short exact sequence
\[\{0\} \to N \cap \mathcal Z \to N \to p(N) \to \{e\}.\]
By assumption $N \cap \mathcal Z = \{0\}$. Since $G_0$ is simple we have either $p(N) = \{e\}$ or $p(N) = H_G$. In the second case we obtain a splitting of the extension defined by $p$. Since the extension was assumed to be non-trivial, this is impossible. Thus we are in the first case and $N = \{e\}$. Since the period group $N$ was arbitrary, this shows that $f$ is aperiodic.
\end{proof}
Now we claim:
\begin{proposition}\label{MonotoneConv} Let $g \in H$. Then $T(g) > 0$ if and only if $g.x > x$ for all $x \in \R$.
\end{proposition}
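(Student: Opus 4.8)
The plan is to analyze the displacement function $\phi(x) := g.x - x$. Since elements of $H$ satisfy $g.(x+1) = g.x + 1$, this function is continuous and $1$-periodic, so $\phi$ is completely determined by its restriction to the compact interval $[0,1]$; this periodicity plus compactness is the one structural feature I will exploit repeatedly.

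For the direction $(\Leftarrow)$ I would argue directly. Assume $g.x > x$ for all $x$, so $\phi > 0$ everywhere. By periodicity and continuity $\phi$ attains a minimum $\delta := \min_{[0,1]}\phi$, which is strictly positive. Hence $g.x \geq x + \delta$ for all $x$, and iterating gives $g^n.x \geq x + n\delta$. Dividing by $n$ and letting $n \to \infty$ in the defining formula yields $T(g) \geq \delta > 0$.

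For the direction $(\Rightarrow)$ I would proceed by contraposition, splitting the failure of $g.x > x$ into two cases controlled by the classical dichotomy relating fixed points to the vanishing of $T$. First, if $g$ has a fixed point $x_0$, i.e.\ $g.x_0 = x_0$, then $g^n.x_0 = x_0$ for every $n$ and the formula for $T$ immediately gives $T(g) = 0$. Second, if $g$ has no fixed point then $\phi$ is nowhere zero, and being continuous and $1$-periodic it has constant sign on all of $\R$ (here one invokes the intermediate value theorem: a sign change would force a zero). The case $\phi > 0$ everywhere is exactly the desired conclusion $g.x > x$. In the remaining case $\phi < 0$ everywhere, the same uniform estimate as in the first paragraph, applied to $-\phi$, produces $g^n.x \leq x - n\delta'$ with $\delta' > 0$ and hence $T(g) \leq -\delta' < 0$. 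Combining the two observations: whenever $T(g) > 0$, the map $g$ must be fixed-point free \emph{and} have displacement of positive sign, which is precisely $g.x > x$ for all $x$.

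The only mildly delicate point, which I have used implicitly, is the standard fact that the limit defining $T(g)$ exists and is independent of the chosen basepoint $x$; I would either cite this from the literature on the translation/rotation number (e.g.\ \cite{Ghys1, Ghys2}) or recall the elementary reason, namely that monotonicity and periodicity give the cocycle bound $\abs{(g^n.x - x) - (g^n.y - y)} < 1$ for $x,y$ in a common fundamental domain, so the $x$-dependence washes out after dividing by $n$. Granting that, the argument is entirely elementary, and the genuine content is the dichotomy ``$g$ has a fixed point'' versus ``$\phi$ is sign-definite''; this is the step I expect to be the conceptual crux rather than any computation.
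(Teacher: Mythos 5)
Your proof is correct, and it takes a genuinely different route from the paper in the harder direction $(\Leftarrow)$. The paper never invokes compactness or the extreme value theorem: assuming $g.x > x$ for all $x$, it first gets $T(g) \geq 0$ by reversing the inequalities of the forward direction, and then rules out $T(g) = 0$ by a separate argument, showing that $T(g)=0$ forces $g^n.x \leq x+1$ for all $n$ (if $g^{n_0}.x > x+1$, then monotonicity and equivariance under integer translations give $g^{mn_0}.x > x+m$, whence $T(g) \geq 1/n_0$), so the monotone sequence $g^n.x$ is bounded and converges to a fixed point of $g$, contradicting $g.x>x$ everywhere. Your argument replaces this two-step detour with the uniform displacement bound $\delta := \min_{[0,1]}(g.x-x) > 0$, which yields the stronger quantitative conclusion $T(g) \geq \delta$ in one line; the price is that you use continuity of the displacement and compactness of the fundamental domain (both available here, since elements of $H$ are homeomorphisms commuting with $\tau_1$), whereas the paper's estimate uses only monotonicity and equivariance, with continuity entering solely at the final fixed-point step. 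For $(\Rightarrow)$ the two arguments essentially coincide (a fixed point forces $T(g)=0$; $g.x_0 < x_0$ at a single point propagates under iteration to $T(g) \leq 0$), except that the paper works directly from the bad basepoint $x_0$ and thus never needs your intermediate-value-theorem dichotomy between fixed points and sign-definite displacement --- your trichotomy is valid but slightly roundabout. Two cosmetic remarks: your cocycle bound should read $\leq 1$ rather than $<1$ (harmless, since only boundedness is used), and your appeal to basepoint independence of the limit matches the paper's own implicit reliance on the standard theory of the translation number.
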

\begin{proof} Suppose $g \in H$ satisfies $T(g) > 0$. If $g$ had a fixed point $x$, then $g^n.x = x$ for all $n \in \mathbb N$ and hence $T(g) = 0$ by definition. Thus $g$ cannot have a fixed point. If $g.x < x$ for some $x$, then by monotonicity $g^n.x \leq g^{n-1}.x$, hence $g^{n}.x < x$ by induction and thus we get the contradiction
\[T(g) = \lim_{n \to \infty}\frac{g^n.x-x}{n} \leq 0.\]
Thus $g^n.x > x$ for all $x \in \R$. Conversely assume $g.x > x$ for all $x \in \R$. Reversing inequality signs in the above argument we get $T(g) \geq 0$. It thus remains only to show that $T(g) \neq 0$. Assume $T(g) = 0$ for contradiction; it will suffice to show that
\begin{eqnarray}\label{InductiveCircle}
g^n.x \leq x+1
\end{eqnarray} 
for all $n \in \mathbb N$. Indeed, \eqref{InductiveCircle} implies that the monotone sequence $g^n.x$ is bounded, and therefore converges to a fixed point of $g$, which yields the desired contradiction. We now prove \eqref{InductiveCircle} assuming $T(g) = 0$: Suppose $g^{n_0}(x) > x+1$ for some $n_0 \in \mathbb N$; since $g$ is monotone and commutes with integral translations we have
\[g^{2n_0}(x) = g^{n_0}(g^{n_0}(x)) \geq g^{n_0}(x+1) = g^{n_0}(x)+1 > x+2,\]
and inductively we obtain $g^{mn_0}(x) > x+m$ for every $m \in \mathbb N$. This in turn implies $T(g) \geq \frac{1}{n_0} > 0$ contradicting $T(g) = 0$.
\end{proof}
\begin{corollary}\label{DominantsH} Let $G$ be a subgroup of $H$ for which $T|_G \not\equiv 0$. Then the unique maximal element $G^{++}_{\max}$ of $\mathcal D_{T|_G}(G)$ is given by
\begin{eqnarray}
G^{++}_{\max} := \{g \in G\,|\, T(g) > 0\} = \{g \in G\,|\, \forall x \in \R:\, g.x > x\}.
\end{eqnarray}
\end{corollary}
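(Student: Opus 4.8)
The plan is to identify the set $P := \{g \in G\,|\, T(g) > 0\}$ with the maximum of $\mathcal D_{T|_G}(G)$ by verifying the intrinsic characterization of dominant sets from Corollary \ref{DominantsChara} and then using condition (D2) to pin down maximality. Observe first that the stated second equality $\{g \in G\,|\, T(g) > 0\} = \{g \in G\,|\, \forall x \in \R:\, g.x > x\}$ is nothing but Proposition \ref{MonotoneConv} restricted to $G$, so it needs no further argument; all the content lies in proving $G^{++}_{\max} = P$.

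First I would check that $P$ satisfies conditions (D1)--(D3) of Corollary \ref{DominantsChara} with respect to $f := T|_G$, which is nonzero by hypothesis. Conditions (D2) and (D3) are immediate: (D2) holds by the very definition of $P$, and (D3) holds with any positive constant, say $C = 1$, since $T(g) \geq 1$ forces $g \in P$. For (D1), conjugation-invariance is clear because $T$, being a homogeneous quasimorphism, is conjugation-invariant, so that $T(hgh^{-1}) = T(g) > 0$ whenever $g \in P$ and $h \in G$.

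The crux is the semigroup property in (D1), and this is where I expect the main obstacle to lie. The naive estimate $T(gh) \geq T(g) + T(h) - D(T)$ coming from the quasimorphism inequality is useless near the boundary, since $T(g), T(h) > 0$ need not sum to more than the defect. Instead I would appeal to the dynamical characterization of Proposition \ref{MonotoneConv}: if $g, h \in P$ then $g.y > y$ and $h.y > y$ for every $y \in \R$, whence $(gh).x = g.(h.x) > h.x > x$ for all $x$. Since $gh \in G$, applying Proposition \ref{MonotoneConv} in the reverse direction yields $T(gh) > 0$, that is $gh \in P$. Thus $P$ is a conjugation-invariant semigroup, (D1) holds, and $P \in \mathcal D_{T|_G}(G)$.

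Finally, maximality is forced by (D2): for any $G^{(++)} \in \mathcal D_{T|_G}(G)$, condition (D2) gives $T|_{G^{(++)}} > 0$, so $G^{(++)} \subset P$. Hence $P$ contains every member of $\mathcal D_{T|_G}(G)$ and, being itself a member, is the maximum and therefore the unique maximal element $G^{++}_{\max}$. Existence and uniqueness of such a maximum was already guaranteed by the corollary preceding Proposition \ref{MaximalFromDom}, so this argument simultaneously reidentifies that abstract element as the explicit set $P$, completing the proof.
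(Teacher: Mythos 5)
Your proof is correct and takes essentially the same approach as the paper: the dynamical description from Proposition~\ref{MonotoneConv} supplies the semigroup property, the description via $T(g)>0$ yields the remaining conditions of Corollary~\ref{DominantsChara}, and maximality follows because every element of $\mathcal D_{T|_G}(G)$ satisfies (D2) (equivalently, Lemma~\ref{DominantsFromOSG}) and is hence contained in $\{g \in G \mid T(g)>0\}$. You merely spell out details the paper leaves implicit.
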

\begin{proof} The equality of the last two sets follows from Proposition \ref{MonotoneConv}. Let us denote this set by $S$; from the second description it follows immediately, that $S$ is a semigroup. On the other hand, the first description yields properties (D1)--(D3) from Corollary \ref{DominantsChara}. Thus $S \in \mathcal D_T(G)$, and maximality follows from the first description and Proposition \ref{DominantsFromOSG}.
\end{proof}
For any $y \in \R$ we denote by $\tau_y: \R \to \R$ the translation map $\tau_y(x) := x+y$. Since these commute with integer translations we have $\tau_y \in H$ for all $y \in \R$. A subgroup $G<H$ is said to \emph{contain small translations}, if for every $\delta > 0$ there exists $\epsilon > 0$ with $\epsilon < \delta$ and $\tau_\epsilon \in G$. Then we have:
\begin{proposition}\label{SubgroupsOfH} Let $G$ be a subgroup of $H$ containing small translations such that the restriction $T|_G$ is aperiodic. Then the unique maximal element of $\mathcal M_{T|_G}(G)$ is given by
\[G^{+}_{\max} := \{g \in G \,|\, \forall x \in \R:\, g.x \geq x\}.\]
\end{proposition}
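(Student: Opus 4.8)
The plan is to reduce everything to the two results immediately preceding the statement: Corollary \ref{DominantsH} identifies the maximal dominant set as
\[G^{++}_{\max} = \{g \in G\,|\, \forall x \in \R:\, g.x > x\},\]
and Proposition \ref{MaximalFromDom} (applicable since $T|_G$ is aperiodic, hence nonzero) reconstructs the maximal order from its dominant set via $G^+_{\max} = \{g \in G\,|\, gG^{++}_{\max} \subseteq G^{++}_{\max}\}$. So the entire task is to show that this ``one-sided stabilizer'' of the dominant set coincides with $S := \{g \in G\,|\, \forall x \in \R:\, g.x \geq x\}$, i.e.\ to verify
\[\{g \in G\,|\, g G^{++}_{\max} \subseteq G^{++}_{\max}\} = \{g \in G\,|\, \forall x \in \R:\, g.x \geq x\}.\]
I would prove this by two inclusions.

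For $S \subseteq G^+_{\max}$, I would take $g \in S$ and $h \in G^{++}_{\max}$ and check $gh \in G^{++}_{\max}$ directly: for every $x \in \R$ we have $h.x > x$, and applying the defining inequality of $S$ at the point $y = h.x$ gives $(gh).x = g.(h.x) \geq h.x > x$. This direction uses only the pointwise inequality $g.y \geq y$ (not even monotonicity) and is immediate.

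The substantial direction is $G^+_{\max} \subseteq S$, and I would argue it by contradiction. Suppose $g \in G^+_{\max}$ but $g.x_0 < x_0$ for some $x_0 \in \R$. The idea is to probe this strict inequality with an arbitrarily small positive translation. Because $g$ is a homeomorphism of $\R$, it is continuous, so there is $\delta > 0$ with $g.(x_0+\epsilon) < x_0$ for all $\epsilon \in (0,\delta)$; because $G$ contains small translations, I may choose such an $\epsilon$ with $\tau_\epsilon \in G$. Then $\tau_\epsilon \in G^{++}_{\max}$, since $\tau_\epsilon.x = x+\epsilon > x$ for all $x$, and the hypothesis $g \in G^+_{\max}$ forces $g\tau_\epsilon \in G^{++}_{\max}$. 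Evaluating at $x_0$ yields $g.(x_0+\epsilon) = (g\tau_\epsilon).x_0 > x_0$, contradicting the choice of $\epsilon$. Hence $g.x \geq x$ for all $x$, i.e.\ $g \in S$.

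The main obstacle is precisely this reverse inclusion, and it is exactly where the two standing hypotheses are consumed: the supply of arbitrarily small translations in $G$ produces elements of $G^{++}_{\max}$ acting by an arbitrarily small uniform shift, while continuity of $g$ guarantees that a single pointwise strict inequality $g.x_0 < x_0$ persists under such a perturbation. Aperiodicity of $T|_G$ plays the more structural role of making Proposition \ref{MaximalFromDom} available at all. I expect the only genuine care to be needed in the continuity-plus-small-translation step; without the small-translations hypothesis one could reconstruct $G^+_{\max}$ only along the available translation lengths, so the hypothesis is essential rather than cosmetic.
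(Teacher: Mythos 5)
Your proposal is correct and follows essentially the same route as the paper: reduce via Corollary \ref{DominantsH} and Proposition \ref{MaximalFromDom} to identifying the stabilizer of $G^{++}_{\max}$, note the easy inclusion, and derive a contradiction from a small translation $\tau_\epsilon \in G^{++}_{\max}$. The only (cosmetic) difference is in the contradiction step: you multiply by $\tau_\epsilon$ on the right and invoke continuity of $g$ to propagate the strict inequality $g.x_0 < x_0$ to $g.(x_0+\epsilon) < x_0$, whereas the paper uses the conjugate $h := g^{-1}\tau_\epsilon g \in G^{++}_{\max}$ so that $gh = \tau_\epsilon g$ and $(gh).x = g.x + \epsilon \leq x - \delta + \epsilon < x$ follows directly, with no appeal to continuity.
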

\begin{proof}
By Corollary \ref{DominantsH} and Corollary \ref{DomIncl} the dominant set of the maximal element of  $\mathcal M_{T_G}(G)$ is  given by.
\begin{eqnarray*}
G^{++}_{\max} := \{g \in G\,|\, T(g) > 0\} = \{g \in G \,|\, \forall x \in \R:\, g.x > x\}.
\end{eqnarray*}
By Proposition \ref{MaximalFromDom} it thus remains only to show that
\begin{eqnarray}\label{MaximalSLTrick}G^+_{\max} = \{g \in G\,|\,
gG^{++}_{\max} \subset G^{++}_{\max} \}.\end{eqnarray}
The inclusion $\subseteq$ is obvious. For the other inclusion we argue by contradicition: Assume $gG^{++}_{\max} \subset G^{++}_{\max}$, but $g.x -x \leq -\delta < 0$ for some $x \in \R$ and $\delta > 0$. Choose  $\epsilon > 0$ with $\epsilon < \delta$ and $\tau_\epsilon \in G$ and put $h := g^{-1}\tau_{\epsilon}g$; then $h \in G^{++}_{\max}$, since
$\tau_{\epsilon}$ has translation number $\epsilon > 0$ and $G^{++}_{\max}$ is conjugation-invariant. By assumption, this implies $gh \in G^{++}_{\max}$. On the other hand
\[(gh).x = \tau_{\epsilon}(g.x) = g.x + \epsilon = x + (gx - x) + \epsilon \leq x,\]
which is a contradiction. This establishes \eqref{MaximalSLTrick} and finishes the proof.
\end{proof}
For $G = H$ we obtain:
\begin{corollary}
The maximal order on the universal covering $H$ of ${\rm Homeo}^+(S^1)$ with respect to the translation number is given by
\[g \geq h \Leftrightarrow \forall x \in \R: g.x \geq h.x \quad(g,h \in H).\]
\end{corollary}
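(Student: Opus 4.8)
The plan is to deduce this corollary directly from Proposition \ref{SubgroupsOfH} by specializing to the case $G = H$, so the entire task reduces to verifying that $H$ itself satisfies the two hypotheses of that proposition: that $H$ contains small translations, and that the restriction $T|_H = T$ is aperiodic. Aperiodicity has already been established earlier in this subsection (via the general lemma on central extensions of simple groups, applied to $p\colon H \to H_0$), so I can simply invoke that. For the small translations hypothesis, I observe that for every $\epsilon > 0$ the translation map $\tau_\epsilon$ commutes with integer translations and is an orientation-preserving homeomorphism of $\R$, hence lies in $H$; thus $H$ trivially contains small translations (indeed it contains $\tau_\epsilon$ for \emph{every} $\epsilon > 0$).

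With both hypotheses verified, Proposition \ref{SubgroupsOfH} immediately gives
\[
H^+_{\max} = \{g \in H \,|\, \forall x \in \R:\, g.x \geq x\},
\]
which is the order semigroup of the maximal order associated to $T$. It then remains only to translate this description of the order semigroup into the stated description of the order relation itself. Recalling that a bi-invariant order is recovered from its order semigroup via $g \geq h \Leftrightarrow gh^{-1} \in H^+_{\max}$, I would compute: $g \geq h$ holds precisely when $gh^{-1} \in H^+_{\max}$, i.e. when $(gh^{-1}).y \geq y$ for all $y \in \R$. Substituting $y = h.x$ and using that $h$ is a bijection of $\R$ (so $y$ ranges over all of $\R$ as $x$ does), this reads $(gh^{-1}).(h.x) = g.x \geq h.x$ for all $x \in \R$, which is exactly the claimed equivalence.

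The proof is therefore essentially a matter of checking hypotheses and performing a routine change of variables; there is no genuine obstacle, since all the substantive work has been carried out in Proposition \ref{MonotoneConv}, Corollary \ref{DominantsH} and Proposition \ref{SubgroupsOfH}. The only point requiring a moment's care is the change of variables in the final step, where one must use that $h$ acts as a bijection of $\R$ to conclude that the conditions $(gh^{-1}).y \geq y$ for all $y$ and $g.x \geq h.x$ for all $x$ are equivalent rather than merely one implying the other.
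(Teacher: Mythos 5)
Your proposal is correct and matches the paper's intended argument: the paper states this corollary as the immediate specialization of Proposition \ref{SubgroupsOfH} to $G = H$, with aperiodicity of $T$ already established earlier in the subsection and the small-translations hypothesis trivially satisfied since $\tau_\epsilon \in H$ for all $\epsilon > 0$. Your explicit verification of the change of variables $y = h.x$ (using that $h$ is a bijection of $\R$) spells out a step the paper leaves implicit, but it is the same proof.
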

For another important special case of Proposition \ref{SubgroupsOfH} see Section \ref{SecSL2case} below.

\subsection{An abstract criterion for maximality}
In the above examples, the maximal dominant sets have been open. Conversely, open dominant sets in connected groups tend to be maximal. The following theorem makes this statement precise. For the statement we refer to a continuous map $T: \R^{\geq 0} \to G$ as one-parameter semigroup if $T(t+s) = T(t)T(s)$ and $T(0) = e$.
\begin{theorem}\label{Recognition} Let $G$ be a connected topological group and $f \in \mathcal{HQM}({G}) \setminus \{0\}$ be aperiodic. Suppose $G^+ \in \mathcal M_f(G)$ is closed and satisfies:
\begin{itemize}
\item[$(\dagger)$] The dominant set $G^{++}$ of $G^+$ is open in $G$.
\item[$(\dagger\dagger)$] ${\rm Int}(G^+)$ is path-connected, dense in $G^+$ and there exists a one-parameter semigroup $T: \R^{\geq 0} \to G$ with $T(\R^{>0})\subset {\rm Int}(G^+)$.
\end{itemize}
Then $G^+ = G^+_{\max}$ is the unique maximal element of $\mathcal M_f(G)$.
\end{theorem}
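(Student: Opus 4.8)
The plan is to prove the nontrivial inclusion $G^+_{\max} \subseteq G^+$ (the reverse being automatic, since $G^+ \in \mathcal M_f(G)$ and $G^+_{\max}$ is the unique maximal element of $\mathcal M_f(G)$ by Proposition \ref{CalegariOrder}). I would first reduce everything to the single identity $G^{++}_{\max} = {\rm Int}(G^+)$. Note that $(\dagger)$ already forces $G^{++} = {\rm Int}(G^+)$: the dominant set $G^{++}$ is contained in $G^+$ and open by $(\dagger)$, hence $G^{++} \subseteq {\rm Int}(G^+)$, while the reverse inclusion holds by Lemma \ref{TrivialTopologicalStuff}(i). Granting $G^{++}_{\max} = {\rm Int}(G^+)$, the inclusion $G^+_{\max} \subseteq G^+$ follows quickly: for $g \in G^+_{\max}$ Proposition \ref{MaximalFromDom} gives $gG^{++}_{\max} \subseteq G^{++}_{\max}$, so $gT(t) \in G^{++}_{\max} = {\rm Int}(G^+) \subseteq G^+$ for every $t>0$; letting $t \to 0^+$ and using that $G^+$ is closed yields $g \in G^+$.

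The first real step is to show that $G^{++}_{\max}$ is path-connected. By Lemma \ref{DominantsFromOSG} we have $G^{++}_{\max} = \{g \in G^+_{\max} : f(g) > 0\}$, a conjugation-invariant semigroup containing ${\rm Int}(G^+) = G^{++}$ (Corollary \ref{DomIncl}). Using $T(1) \in {\rm Int}(G^+)$, so $f(T(1))>0$, continuity of $f$ together with homogeneity and the semigroup law give $f(T(t)) \to \infty$ as $t \to \infty$; hence for fixed $g \in G^{++}_{\max}$ the defect estimate $f(gT(t)) \geq f(g) + f(T(t)) - D(f)$ tends to $\infty$, so $gT(t) \in {\rm Int}(G^+)$ for $t$ large by Lemma \ref{TrivialTopologicalStuff}(ii). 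The path $t \mapsto gT(t)$ stays in the semigroup $G^{++}_{\max}$ and joins $g$ to a point of ${\rm Int}(G^+)$; since ${\rm Int}(G^+)$ is path-connected by $(\dagger\dagger)$, this makes $G^{++}_{\max}$ path-connected.

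The decisive step is then a clopen argument inside the connected space $G^{++}_{\max}$. The subset ${\rm Int}(G^+)$ is nonempty (it contains $T(1)$) and open. It is also relatively closed in $G^{++}_{\max}$: since ${\rm Int}(G^+)$ is dense in $G^+$ and $G^+$ is closed, its closure in $G$ equals $G^+$, and one computes $G^+ \cap G^{++}_{\max} = \{g \in G^+ : f(g) > 0\} = G^{++} = {\rm Int}(G^+)$, the first equality because $G^{++}_{\max} \subseteq \{f>0\}$ and the last by $(\dagger)$. Hence the relative closure of ${\rm Int}(G^+)$ in $G^{++}_{\max}$ is again ${\rm Int}(G^+)$. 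A nonempty clopen subset of a connected space is the whole space, so $G^{++}_{\max} = {\rm Int}(G^+)$, supplying the identity needed in the reduction above.

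I expect the main obstacle to be exactly this relative-closedness: it is the one place where $(\dagger)$ is genuinely used, being converted (through $G^{++} = {\rm Int}(G^+)$ and Lemma \ref{DominantsFromOSG}) into the identity $G^+ \cap G^{++}_{\max} = {\rm Int}(G^+)$ that prevents $G^{++}_{\max}$ from protruding beyond $G^+$. The one-parameter semigroup supplied by $(\dagger\dagger)$ plays a double role --- it both connects $G^{++}_{\max}$ and furnishes the limiting argument that pulls $G^+_{\max}$ back into the closed set $G^+$ --- so verifying that $f(T(t)) \to \infty$ and that the paths $t \mapsto gT(t)$ remain in $G^{++}_{\max}$ are the routine but essential checks.
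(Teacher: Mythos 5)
Your proof is correct, and while it runs on the same fuel as the paper's, the organization is genuinely different. The paper first shows (Corollary \ref{RecognitionMain}) that $G^+_{\max}$ itself satisfies $(\dagger\dagger)$ --- in particular that ${\rm Int}(G^+_{\max})$ is dense in $G^+_{\max}$ and path-connected --- and then argues by contradiction via the point-set Lemma \ref{pointset1} applied to $A = G^+ \subsetneq B = G^+_{\max}$: a boundary point of $G^+$ lying in ${\rm Int}(G^+_{\max})$ would violate the identities $G^{++} = {\rm Int}(G^+) \subset G^{++}_{\max}$ and $G^{++} = G^{++}_{\max} \cap G^+$ (equations \eqref{OpenDom1} and \eqref{OpenDom2}, which your derivations from $(\dagger)$ and Lemma \ref{DominantsFromOSG} reproduce exactly). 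You instead run the connectedness argument inside the dominant set $G^{++}_{\max}$ rather than inside ${\rm Int}(G^+_{\max})$: using the ray argument $f(T(t)) \to \infty$ (the paper's Lemma \ref{RayExistence}) you show $G^{++}_{\max}$ is path-connected by flowing any dominant element into ${\rm Int}(G^+)$ along $t \mapsto gT(t)$, and then observe that ${\rm Int}(G^+)$ is nonempty, open, and --- since $\overline{{\rm Int}(G^+)} = G^+$ and $G^+ \cap G^{++}_{\max} = G^{++} = {\rm Int}(G^+)$ --- relatively closed, so the clopen-in-connected principle yields the clean identity $G^{++}_{\max} = {\rm Int}(G^+)$ directly, rather than by contradiction. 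What this buys: you bypass Lemma \ref{pointset1} entirely and need only the path-connectedness half of Corollary \ref{RecognitionMain}; its density statement (proved in the paper by a net argument) is replaced by your final limit $gT(t) \to g$ as $t \to 0^+$, which correctly combines the ideal-type property from Proposition \ref{MaximalFromDom} with the closedness of $G^+$. Moreover you obtain $G^{++}_{\max} = {\rm Int}(G^+)$ as an explicit intermediate result, which in the paper only follows a posteriori from $G^+ = G^+_{\max}$. The paper's route, in exchange, isolates a reusable topological lemma and a statement of independent interest (Corollary \ref{RecognitionMain} is invoked again in Proposition \ref{Evidence}). All your individual steps check out against the paper's toolkit: ${\rm Int}(G^+) = G^{++} \subseteq G^{++}_{\max}$ via Lemma \ref{TrivialTopologicalStuff}(i) and Corollary \ref{DomIncl}, and Lemma \ref{TrivialTopologicalStuff}(ii) to land in ${\rm Int}(G^+)$ for large $t$.
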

Note that in view of Lemma \ref{TrivialTopologicalStuff} assumption $(\dagger)$ implies
 \begin{eqnarray}\label{OpenDom1}
 G^{++} = {\rm Int}(G^+) \subset {\rm Int}(G^{+}_{\max}) \subset G^{++}_{\max},
 \end{eqnarray}
 where as before $G^{++}_{\max}$ denotes the dominant set of $G^+_{\max}$, or equivalently, the maximal object in $\mathcal D_f(G)$. On the other hand Lemma \ref{DominantsFromOSG} yields
 \begin{eqnarray}\label{OpenDom2}
 G^{++} = G_{\max}^{++} \cap G^+.
 \end{eqnarray}
Moreover we observe:
\begin{lemma}\label{RayExistence} Let $G$, $f$ as in Theorem \ref{Recognition} and suppose $G^+ \in \mathcal M_f(G)$ satisfies $(\dagger\dagger)$. Let $T: \R^{\geq 0} \to G$ be a one-parameter semigroup with $T(\R^{>0})\subset {\rm Int}(G^+)$. Then $f(T(t)) \to \infty$ as $t \to \infty$.
\end{lemma}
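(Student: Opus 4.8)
The plan is to study the scalar function $\phi \colon \R^{\geq 0} \to \R$, $\phi(t) := f(T(t))$, and to show directly that $\phi(t) \to \infty$. The starting point is that $\phi$ takes positive values on $(0,\infty)$: for $t > 0$ we have $T(t) \in {\rm Int}(G^+)$ by hypothesis, hence $T(t) \in G^{++}$ by Lemma \ref{TrivialTopologicalStuff}(i), and therefore $f(T(t)) > 0$ by the description of the dominant set in Lemma \ref{DominantsFromOSG}. Thus $\phi > 0$ on $(0,\infty)$ and $\phi(0) = f(e) = 0$, so in particular $\phi \geq 0$ everywhere.

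Next I would record two elementary facts about $\phi$ coming from the semigroup relation $T(t+s) = T(t)T(s)$. First, the defect bound applied to $g = T(t)$, $h = T(s)$ gives the approximate additivity
\[\phi(t+s) \geq \phi(t) + \phi(s) - D(f) \qquad (s,t \geq 0).\]
Second --- and this is the crucial point --- homogeneity of $f$ together with $T(t)^n = T(nt)$ yields the exact identity
\[\phi(nt) = f(T(t)^n) = n\,f(T(t)) = n\,\phi(t) \qquad (t \geq 0,\ n \in \N).\]

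With these in hand the conclusion is short. Fix any $t_0 > 0$ and set $c := \phi(t_0) > 0$. For arbitrary $t \geq t_0$ write $t = n t_0 + r$ with $n := \lfloor t/t_0 \rfloor$ and $0 \leq r < t_0$. Approximate additivity followed by the homogeneity identity give
\[\phi(t) \geq \phi(n t_0) + \phi(r) - D(f) \geq n\,\phi(t_0) - D(f) = nc - D(f),\]
where I used $\phi(r) \geq 0$. Since $n = \lfloor t/t_0 \rfloor \to \infty$ as $t \to \infty$ and $c > 0$, the right-hand side tends to $+\infty$, which proves $f(T(t)) = \phi(t) \to \infty$.

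The only genuine obstacle is the interplay between the defect and the growth rate: approximate additivity alone only gives $\phi(n t_0) \gtrsim n(c - D(f))$, which is worthless when the slope $c = \phi(t_0)$ does not exceed the defect $D(f)$. This is exactly why one must not merely treat $\phi$ as a subadditive-up-to-a-constant function but instead exploit the \emph{exact} homogeneity $\phi(n t_0) = nc$, which cancels the defect at the integer multiples and leaves only a bounded error $D(f)$ on the fractional remainder $r$. Everything else --- positivity of $\phi$ and the two displayed estimates --- is routine once Lemmas \ref{TrivialTopologicalStuff} and \ref{DominantsFromOSG} are invoked.
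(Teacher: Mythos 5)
Your proof is correct, and it diverges from the paper's at the decisive step. Both arguments start identically: fix $t_0>0$, use Lemma \ref{TrivialTopologicalStuff}(i) to place $T(t_0)\in G^{++}$, and conclude $\phi(t_0) := f(T(t_0))>0$ from the description of the dominant set in Lemma \ref{DominantsFromOSG}. The paper then observes that $H:=\{T(t)\,|\,t\geq 0\}\cup\{T(t)^{-1}\,|\,t\geq 0\}$ is an \emph{abelian subgroup} of $G$, invokes the standard fact (recorded earlier in the paper) that homogeneous quasimorphisms restrict to homomorphisms on amenable subgroups, and so gets the exact additivity $\phi(t+s)=\phi(t)+\phi(s)$, from which divergence is immediate. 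You avoid that black box entirely: you use only the exact homogeneity identity $\phi(nt)=n\phi(t)$ (valid because $T(t)^n=T(nt)$), the defect inequality $\phi(t+s)\geq\phi(t)+\phi(s)-D(f)$, and the positivity $\phi\geq 0$ on the fractional remainder $r\in[0,t_0)$, arriving at $\phi(t)\geq \lfloor t/t_0\rfloor\,\phi(t_0)-D(f)\to\infty$. Your route is more elementary and self-contained, needing nothing beyond the definition of a homogeneous quasimorphism plus the two dominance lemmas; the paper's is shorter and yields a slightly stronger structural fact (exact additivity of $\phi$ along the whole semigroup, not just at integer multiples). Your closing remark is also on point: naive subadditivity-up-to-defect alone is worthless when $\phi(t_0)\leq D(f)$, and it is precisely the exact homogeneity at integer multiples (or, in the paper's version, exact additivity via amenability) that neutralizes the defect. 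All intermediate steps check out, including the positivity of $\phi$ on $(0,\infty)$, which legitimately uses the connectedness of $G$ assumed in Theorem \ref{Recognition}.
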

\begin{proof} Fix $t_0 > 0$ and observe that by Lemma \ref{TrivialTopologicalStuff} we have $T(t_0)\in G^{++}$. In particular,
 $f(T({t_0})) > 0$ by Lemma \ref{DominantsFromOSG}. Now $H := \{T(t)\,|\,t \geq 0\} \cup \{T(t)^{-1}\,|\,t \geq 0\}$ is an abelian subgroup of $G$, hence $f$ restricts to a homomorphism on $H$. As $f(T({t_0})) > 0$ we see that $f(T(t)) \to \infty$ as $t \to \infty$.
\end{proof}
From this we deduce:
\begin{corollary}\label{RecognitionMain} Let $G$, $f$ as in Theorem \ref{Recognition}. If $G^+ \in \mathcal M_f(G)$ is closed and satisfies $(\dagger\dagger)$, then $G^{+}_{\max}$ satisfies $(\dagger\dagger)$.
\end{corollary}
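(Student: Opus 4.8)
The plan is to verify the three clauses of $(\dagger\dagger)$ for $G^+_{\max}$ one at a time, feeding in the corresponding properties of $G^+$ and the one-parameter semigroup $T$ supplied by the hypothesis. The structural fact driving everything is that the interior of any submonoid $S \subset G$ is a two-sided ideal of $S$: since left and right translations are homeomorphisms, $s\cdot{\rm Int}(S) = {\rm Int}(sS) \subset {\rm Int}(S)$ and ${\rm Int}(S)\cdot s = {\rm Int}(Ss)\subset{\rm Int}(S)$ for every $s\in S$. I will apply this to $S=G^+_{\max}$. Moreover, since $G^+\subset G^+_{\max}$ and the interior operation is monotone under inclusion, we have ${\rm Int}(G^+)\subset{\rm Int}(G^+_{\max})$; in particular the given $T$ already satisfies $T(\R^{>0})\subset{\rm Int}(G^+)\subset{\rm Int}(G^+_{\max})$, so the ray clause of $(\dagger\dagger)$ holds for $G^+_{\max}$ with the very same $T$.

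For density of ${\rm Int}(G^+_{\max})$ in $G^+_{\max}$, I would take an arbitrary $g\in G^+_{\max}$ and look at the curve $t\mapsto g\,T(t)$. For $t>0$ we have $T(t)\in{\rm Int}(G^+_{\max})$, and since ${\rm Int}(G^+_{\max})$ is a left ideal in $G^+_{\max}$ the point $g\,T(t)$ lies in ${\rm Int}(G^+_{\max})$; by continuity of $T$ it converges to $g\,T(0)=g$ as $t\to 0^+$. Hence $g\in\overline{{\rm Int}(G^+_{\max})}$, and density follows.

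The main work, and the step I expect to be the real obstacle, is path-connectedness of ${\rm Int}(G^+_{\max})$, because this set may a priori contain points lying far outside the closed semigroup $G^+$. The idea is to push every point back into the smaller set ${\rm Int}(G^+)$, which is already path-connected by $(\dagger\dagger)$ for $G^+$. Fix the base point $p:=T(1)\in{\rm Int}(G^+)$ and let $g\in{\rm Int}(G^+_{\max})$ be arbitrary. The curve $s\mapsto g\,T(s)$ stays in ${\rm Int}(G^+_{\max})$, this time because ${\rm Int}(G^+_{\max})$ is a right ideal and $g\in{\rm Int}(G^+_{\max})$. The quasimorphism inequality gives $f(g\,T(s))\ge f(g)+f(T(s))-D(f)$, and since $f(T(s))\to\infty$ by Lemma \ref{RayExistence}, we obtain $f(g\,T(s))\to\infty$ as $s\to\infty$. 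Applying Lemma \ref{TrivialTopologicalStuff}(ii) to the order $G^+$ produces a constant $C>0$ with $\{h\in G\mid f(h)>C\}\subset{\rm Int}(G^+)$; choosing $s_0$ large enough that $f(g\,T(s_0))>C$ therefore places the endpoint in ${\rm Int}(G^+)$. Thus $s\mapsto g\,T(s)$ on $[0,s_0]$ is a path inside ${\rm Int}(G^+_{\max})$ joining $g$ to a point of ${\rm Int}(G^+)$, and I can complete the journey to $p$ by a path lying in the path-connected set ${\rm Int}(G^+)\subset{\rm Int}(G^+_{\max})$. Concatenating, every $g\in{\rm Int}(G^+_{\max})$ is joined to $p$ within ${\rm Int}(G^+_{\max})$, so the latter is path-connected.

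In summary, the density and ray clauses are essentially formal consequences of the ideal structure of interiors of submonoids, while the crux is path-connectedness. That step rests on the interplay of three ingredients: the growth $f(g\,T(s))\to\infty$ (Lemma \ref{RayExistence} together with the defect inequality), the absorption principle of Lemma \ref{TrivialTopologicalStuff}(ii) that returns large-$f$ elements to ${\rm Int}(G^+)$, and the ideal property keeping the connecting curve inside ${\rm Int}(G^+_{\max})$ throughout. The closedness of $G^+$ does not seem to be needed for this particular conclusion, though it is of course available.
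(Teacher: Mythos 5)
Your proof is correct and follows essentially the same route as the paper: the crucial path-connectedness step (push $g\in{\rm Int}(G^+_{\max})$ along $\gamma_g(s)=gT(s)$, use $f(\gamma_g(s))\to\infty$ via Lemma \ref{RayExistence} and the absorption statement of Lemma \ref{TrivialTopologicalStuff}(ii) to land in ${\rm Int}(G^+)$, then finish inside the path-connected set ${\rm Int}(G^+)$) is exactly the paper's argument, and the ray clause is handled identically. The only cosmetic difference is the density step, where you approximate $g$ by $gT(t)$ as $t\to 0^{+}$ instead of the paper's net $gx_i$ with $x_i\in{\rm Int}(G^+)$, $x_i\to e$; both rest on the same ideal property of the interior, and your variant incidentally confirms your closing remark that closedness of $G^+$ (and even density of ${\rm Int}(G^+)$ in $G^+$) is not needed for this particular conclusion.
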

\begin{proof} Since $e \in G^+$ and ${\rm Int}(G^+)$ is dense in $G^+$, we see that $e$ is an accumulation point of ${\rm Int}(G^+)$. We may thus choose a net $x_i \in {\rm Int}(G^+)$ with $x_i \to e$; then in particular $x_i \in {\rm Int}(G^+_{\max})$. Now the latter is an ideal in $G^{+}_{\max}$; thus for every $g \in G^{+}_{\max}$ we have $gx_i \in  {\rm Int}(G^{+}_{\max})$ and $gx_i \to g$. This shows that ${\rm Int}(G^{+}_{\max})$ is dense in $G^{+}_{\max}$. Now choose $T:  \R^{\geq 0} \to G$ as in Lemma \ref{RayExistence}. Then for every $g \in {\rm Int}(G^{+}_{\max})$ the curve $\gamma_g(t) := gT(t)$ is contained in ${\rm Int}(G^{+}_{\max})$ and satisfies $f(\gamma_g(t)) \to \infty$. By Lemma \ref{TrivialTopologicalStuff} we thus find $t_0 \in \R$ with $\gamma_g(t_0) \in {\rm Int}(G^+)$. This shows that every $g \in {\rm Int}(G^{+}_{\max})$ can be connected by a curve inside ${\rm Int}(G^{+}_{\max})$ to an element in ${\rm Int}(G^+)$; since the latter is path-connected, we deduce that also ${\rm Int}(G^{+}_{\max})$ is path-connected. Finally, every one-parameter semigroup contained in ${\rm Int}(G^+)$ is contained in ${\rm Int}(G^+_{\max})$.
\end{proof}
We will combine this observation with the following elementary lemma from point-set topology:
\begin{lemma}\label{pointset1}
Let $X$ be a topological space with non-empty subsets $A,B$. Assume that ${\rm Int}_X(B)$ is connected and that
\[A = \overline{{\rm Int}_X(A)} \subsetneq B = \overline{{\rm Int}_X(B)}.\]
Then
\[\partial A \cap {\rm Int}_X(B) \neq \emptyset.\]
\end{lemma}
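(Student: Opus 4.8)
The plan is to argue by contradiction: I would assume $\partial A \cap {\rm Int}_X(B) = \emptyset$ and derive a contradiction with the connectedness of ${\rm Int}_X(B)$. Before starting, I would extract two elementary consequences of the hypothesis $A = \overline{{\rm Int}_X(A)}$. First, $A$ is closed, being a closure. Second, ${\rm Int}_X(A) \neq \emptyset$: otherwise $A = \overline{\emptyset} = \emptyset$, contradicting the assumption that $A$ is non-empty.

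Write $U := {\rm Int}_X(B)$. The heart of the argument is to decompose $U$ into two disjoint open pieces,
\[U = (U \cap A) \,\sqcup\, (U \setminus A).\]
The piece $U \setminus A = U \cap (X \setminus A)$ is open because $A$ is closed. For the piece $U \cap A$, I would invoke the standing assumption: since $A = {\rm Int}_X(A) \sqcup \partial A$ and, by hypothesis, $U \cap \partial A = \emptyset$, we get $U \cap A = U \cap {\rm Int}_X(A)$, which is open as an intersection of two open sets. Thus $U$ is partitioned into two open sets, and since $U$ is connected one of them must be empty. This openness of $U \cap A$ is the only place where the contradiction hypothesis is genuinely used, and it is the main (mild) obstacle; everything else is bookkeeping with the hypotheses.

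It then remains to rule out both alternatives. If $U \cap A = \emptyset$, I would observe that ${\rm Int}_X(A)$ is a non-empty open set contained in $A \subseteq B$, hence ${\rm Int}_X(A) \subseteq {\rm Int}_X(B) = U$; this forces $\emptyset \neq {\rm Int}_X(A) \subseteq U \cap A$, a contradiction. If instead $U \setminus A = \emptyset$, then ${\rm Int}_X(B) \subseteq A$, and passing to closures gives $B = \overline{{\rm Int}_X(B)} \subseteq \overline{A} = A$, contradicting the strict inclusion $A \subsetneq B$. Either way we reach a contradiction, so $\partial A \cap {\rm Int}_X(B) \neq \emptyset$, as claimed.
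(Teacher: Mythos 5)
Your proof is correct and takes essentially the same route as the paper's: both argue by contradiction that, when $\partial A \cap {\rm Int}_X(B) = \emptyset$, the set $U \cap A = {\rm Int}_X(A)$ is clopen in the connected space $U = {\rm Int}_X(B)$, and then rule out the two alternatives $U \cap A = \emptyset$ and $U \cap A = U$ using $A \neq \emptyset$ and $A \subsetneq B$. The only cosmetic difference is that the paper verifies relative closedness of ${\rm Int}_X(A)$ in ${\rm Int}_X(B)$ by computing its relative closure directly, whereas you verify openness of the complement $U \setminus A$ via the closedness of $A$ --- equivalent formulations of the same step.
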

\begin{proof} Abbreviate $A^\circ := {\rm Int}_X(A)$, $B^\circ = {\rm Int}_X(B)$ and observe that $A^\circ \subset B^\circ$ is an open subset. Now assume $\partial A \cap B^\circ = \emptyset$ for contradiction and  denote by $cl_{B^\circ} A^\circ$ the closure of $A^\circ$ in $B^\circ$. Then
\[cl_{B^\circ} A^\circ = \overline{A^\circ} \cap B^\circ = A \cap B^\circ = (A^\circ \cup \partial A) \cap B^\circ = (A^\circ \cap B^\circ)  \cup (\partial A \cap B^\circ) = A^\circ.\]
This shows that $A^\circ$ is both closed and open in $B^\circ$. Since $B^\circ$ is assumed connected we either have  $A^\circ = \emptyset$ or $A^\circ = B^\circ$. Taking closure we thus end up with either of the two contradictions $A =
\emptyset$ or $A = B$.
\end{proof}
\begin{proof}[Proof of Theorem \ref{Recognition}]
In view of Corollary \ref{RecognitionMain} we can apply Lemma \ref{pointset1} with $A := G^+$, $B := G_{\max}^+$ and $X := G$. If we assume $A \neq B$, then the lemma implies
\[\partial G^+ \cap {\rm Int}(G_{\max}^+) \neq \emptyset.\]
Now observe that by \eqref{OpenDom1} we have $G^{++} = {\rm Int}(G^+)$. In particular, $\partial G^+ = G^+ \setminus G^{++}$. On the other hand Lemma \ref{TrivialTopologicalStuff} yields $ {\rm Int}(G_{\max}^+) \subset G_{\max}^{++}$. We thus obtain
\[(G^+ \setminus G^{++}) \cap G_{\max}^{++} \neq \emptyset,\]
which contradicts \eqref{OpenDom2}.
\end{proof}

\section{Maximal and continuous orders on Lie groups}\label{SecLie}

\subsection{Classification of aperiodic quasimorphisms on finite-dimensional Lie groups}
In this subsection we provide a classification of quasimorphisms on finite-dimensional connected Lie groups. (Recall that for us a quasimorphism is by definition assumed to be continuous.)  By Proposition \ref{Factorization1}  we may restrict attention to aperiodic quasimorphisms, whose
classification is an easy consequence of results from \cite{BuMo, Shtern, Surface} and probably known to people working on bounded cohomology. However, to the best of our knowledge the explicit classification statement has never appeared in print and certainly is not widely known among Lie theorists. We therefore explain the classification in some details, starting from the following result:
\begin{proposition}[Burger--Monod, Shtern, \cite{BuMo, Shtern}]\label{BMS} Let $G$ be a connected semisimple Lie group with Lie algebra $\L g$, $\L k$ a maximal compact Lie subalgebra of $\L g$, $K$ the corresponding analytic subgroup of $G$ and $Z(G)$ and $Z(K)$ the centers of $G$ and $K$ respectively.
\begin{itemize}
\item[(i)] $Z(G)$ is finite if and only if $Z(K)$ is compact. In this case, $G$ does not admit a nonzero homogeneous quasimorphism.
\item[(ii)] If $G$ is simple and $Z(G)$ has infinite center, then the space of homogeneous quasimorphisms on $G$ is one-dimensional. This statement remains true even if $G$ is considered as a discrete group.
\end{itemize}
\end{proposition}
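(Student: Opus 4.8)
The plan is to route the entire statement through the standard bridge between homogeneous quasimorphisms and second bounded cohomology. For any topological group $G$ there is an exact sequence
\[
0 \ra \Hom_c(G,\R) \ra \mathcal{HQM}(G) \ra H^2_{cb}(G;\R) \xrightarrow{c} H^2_c(G;\R),
\]
in which the image of $\mathcal{HQM}(G)$ is precisely $\ker(c)$, so that $\mathcal{HQM}(G) \cong \Hom_c(G,\R) \+ \ker(c)$. Since $G$ is (semi)simple it is topologically perfect, whence $\Hom_c(G,\R) = 0$ and everything reduces to computing $\ker(c)$. The two inputs I would feed in are (a) the Burger--Monod computation of continuous bounded cohomology, together with the injectivity of $c$ in the finite-centre case \cite{BuMo}, and (b) the van Est isomorphism $H^*_c(G;\R) \cong H^*(\L g, \L k_{\max};\R)$, where $\L k_{\max}$ is the Lie algebra of a \emph{genuinely compact} maximal subgroup. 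The crux is that this second group behaves very differently according to whether $Z(G)$ is finite or infinite.

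For part (i) I would first establish the structural dichotomy. Writing $\L g = \L k \+ \L p$ for the Cartan decomposition, the map $K \times \L p \ra G$, $(k,X) \mapsto k\exp(X)$, is a diffeomorphism, so every central element lies in $K$ and in fact in $Z(K)$, giving $Z(G) \subseteq Z(K) \subseteq K$. Since $\L k = \mathfrak{z}(\L k) \+ [\L k,\L k]$ with $[\L k,\L k]$ of compact type, its analytic subgroup is compact, so $K$ is compact iff $Z(K)$ is compact. Finally $K$ is compact iff $Z(G)$ is finite: a compact $K$ forces the closed discrete subgroup $Z(G)$ to be finite, and conversely finiteness of $Z(G)$ forces $K$ compact by standard structure theory. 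Granting this, in the finite-centre case $K$ is compact, van Est gives $H^2_c(G;\R) \cong H^2(\L g,\L k;\R)$, and Burger--Monod guarantee that $c$ is injective; hence $\ker(c)=0$ and $\mathcal{HQM}(G)=0$.

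For part (ii), simplicity of $\L g$ forces $\dim \mathfrak{z}(\L k) \le 1$, and $Z(G)$ infinite is equivalent to $\mathfrak{z}(\L k) = \R$, i.e. $G$ is of Hermitian type. The key computational shift is that the central circle of $K$ is unwound in $G$, so the genuine maximal compact subalgebra is the semisimple part $[\L k,\L k]$, and van Est now yields $H^2_c(G;\R) \cong H^2(\L g, [\L k,\L k];\R) = 0$: the would-be Kähler class becomes exact once the central direction is reinstated as a coordinate (concretely, for $\widetilde{SL_2(\R)}$ this is just $H^2(\L g;\R)=0$). On the other hand Burger--Monod compute $H^2_{cb}(G;\R) = \R$, generated by the bounded Kähler class, and this is unchanged under passage to covers, the central subgroup being amenable. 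Since $c$ now lands in the zero group, $\ker(c) = H^2_{cb}(G;\R) = \R$, giving a one-dimensional space of continuous homogeneous quasimorphisms.

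It remains to upgrade this to arbitrary, not necessarily continuous, quasimorphisms, and this is the step I expect to be the main obstacle. The computation above controls $H^2_{cb}(G;\R)$, but an abstract homogeneous quasimorphism defines a class in $H^2_b(G^\delta;\R)$ for the underlying discrete group $G^\delta$, and one must show that every such class in the relevant kernel is represented by a continuous one, i.e. that the comparison $H^2_{cb}(G;\R) \ra H^2_b(G^\delta;\R)$ hits the whole space of abstract quasimorphisms. This is exactly the content of Shtern's rigidity theorem \cite{Shtern} (see also \cite{Surface}), which I would invoke to conclude that $\mathcal{HQM}(G)$ remains one-dimensional even for $G$ discrete. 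I would flag that this automatic-continuity input is genuinely the hard part: the bounded-cohomological passage between the topological and the discrete group is not formal, and it is where the real work of \cite{Shtern, Surface} lies.
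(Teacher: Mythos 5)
The paper offers no proof of Proposition \ref{BMS} to compare against: it is stated as a quotation from \cite{BuMo, Shtern}, and the authors themselves remark that the classification "is an easy consequence of results from \cite{BuMo, Shtern, Surface}". So the real question is whether your reconstruction of the argument underlying those references is sound, and in outline it is: the exact sequence $0 \ra \Hom_c(G,\R) \ra \mathcal{HQM}(G) \ra H^2_{cb}(G;\R) \ra H^2_c(G;\R)$ with image equal to $\ker(c)$, perfectness of connected semisimple $G$ killing $\Hom_c(G,\R)$, Burger--Monod's injectivity of the comparison map in the finite-center case for (i), the computation $\ker(c) \cong H^2_{cb}(G;\R) \cong \R$ (invariance of $H^2_{cb}$ under the amenable central kernel, plus vanishing of $H^2_c$ of the infinite cover) for (ii), and Shtern's automatic continuity to pass to $G$ viewed as a discrete group. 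This is precisely the route the cited literature takes, and your closing remark correctly locates the genuinely hard input in Shtern's theorem --- which, incidentally, is also what guarantees that the homogenization of a continuous quasimorphism on such $G$ is again continuous, a point the continuous exact sequence quietly uses when one claims the image of $\mathcal{HQM}(G)$ is all of $\ker(c)$.

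Two spots deserve tightening. First, in (ii) you assert that $Z(G)$ infinite is "equivalent to" $\mathfrak{z}(\L k) = \R$; it only implies it ($SL_2(\R)$ itself is Hermitian with finite center), and infiniteness of the center depends on the covering, not just on $\L g$. This is harmless here since you only use the implication, but as stated it is false. Second, the van Est computation $H^2(\L g, [\L k,\L k];\R) = 0$ is asserted via the heuristic that the K\"ahler class becomes relatively exact. The heuristic is correct --- $\omega(X,Y) = \langle Z_0, [X,Y]\rangle$ is the relative coboundary of the $1$-cochain dual to the central element $Z_0$ --- but vanishing of the whole cohomology group requires excluding other invariant classes, which you do not do. The cleanest repair: for the universal cover, degree-two continuous cohomology classifies central $\R$-extensions, so $H^2_c(\widetilde{G};\R) \cong H^2(\L g;\R) = 0$ by Whitehead's lemma; then $H^2_c(G;\R) \cong H^2_c(\widetilde{G};\R)$ because the kernel of $\widetilde{G} \ra G$ is finite (any subgroup of $Z(\widetilde{G}) \cong \Z \times F$ with infinite quotient is finite) and $H^\bullet_c(\,\cdot\,;\R)$ is insensitive to compact normal subgroups. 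With these repairs your proposal is a complete and faithful rendering of the Burger--Monod/Shtern argument that the paper cites.
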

Let us now describe the quasimorphisms appearing in (ii) explicitly: A connected simple real Lie group $G$ can only have infinite center if the associated symmetric space admits an invariant complex structure; we then call $G$ a \emph{Hermitian} Lie group. Thus assume that $G_0$ is an adjoint simple Hermitian Lie group and fix an Iwasawa decomposition $G_0 = K_0AN$; then the universal covering $\widetilde{G}$ of $G_0$ has a compatible decomposition of the form $\widetilde{G}  = \widetilde{K}AN$, where $\widetilde{K}$ now has a one-dimensional non-compact center $Z$. Fix an isomorphism $Z\cong \R$ and denote by $\pi$ the projection map
\[\pi: \widetilde{G} = \widetilde{K}AN \to Z \cong \R.\]
Then the homogeneization
\begin{eqnarray}\label{GW}
\mu_{\widetilde{G}}: \widetilde{G} \to \R, \quad g \mapsto \lim_{n \to \infty} \frac{\pi(g^n)}{n}\end{eqnarray}
of $\pi$ defines a homogeneous quasimorphism on $\widetilde{G}$, called the \emph{Guichardet-Wigner quasimorphism} of $\widetilde{G}$ \cite{GuichardetWigner, Shtern}. We warn the reader that due to the homogeneization process involved in its definition, $\mu_{\widetilde{G}}$ does not respect the Iwasawa decomposition in any reasonable way. In fact, the above definition is rather useless for practical computations. To actually compute $\mu_{\widetilde{G}}$ one has to use the refined Jordan decomposition of $G_0$; see \cite{Surface} for details.\\

By Proposition \ref{BMS} every homogeneous quasimorphism on $\widetilde{G}$ is a multiple of $\mu_{\widetilde{G}}$; moreover $\mu_{\widetilde{G}}$ descends to a homogeneous quasimorphism $\mu_G$ on every finite central quotient $G$ of $\widetilde{G}$, but not to any infinite quotient. In particular, $\mu_{\widetilde{G}}$ descends to an aperiodic homogeneous quasimorphism $\mu_{{G}}$ on $G := \widetilde{G}/\pi_1(G_0)_{\rm tors}$. By a slight abuse of language we will refer to $\mu_G$ as an \emph{aperiodic Guichardet-Wigner quasimorphism}. This terminology understood, the aperiodic quasimorphism on simple Lie groups are precisely the aperiodic Guichardet-Wigner quasimorphisms. This classification result can be extended to the semisimple case using the following simple observation:
\begin{lemma}
Let $G = G_1 \times G_2$ be a direct product of topological groups and $f: G \to \R$ a homogeneous quasimorphism. Then there exist homogeneous quasimorphisms $f_j: G_j \to \R$ such that $f(g_1, g_2) = f_1(g_1)+f_2(g_2)$.
\end{lemma}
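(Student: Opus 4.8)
The plan is to decompose a homogeneous quasimorphism on a direct product $G = G_1 \times G_2$ into contributions from each factor. First I would define the candidate factor quasimorphisms by restricting $f$ to the two natural copies of $G_1$ and $G_2$ inside $G$, namely setting
\[
f_1(g_1) := f(g_1, e), \qquad f_2(g_2) := f(e, g_2).
\]
Each $f_j$ is continuous and homogeneous because $f$ is, and restricting a quasimorphism to a subgroup yields a quasimorphism, so $f_1, f_2 \in \mathcal{HQM}(G_1)$, respectively $\mathcal{HQM}(G_2)$. The goal is then to show that $f(g_1, g_2) = f_1(g_1) + f_2(g_2)$ for all $(g_1, g_2)$.

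The key step exploits that the two copies $G_1 \times \{e\}$ and $\{e\} \times G_2$ commute elementwise in $G$. For commuting elements $a, b$ of a group, a homogeneous quasimorphism satisfies $f(ab) = f(a) + f(b)$; this is a standard fact, which follows from homogeneity applied to $(ab)^n = a^n b^n$ together with the bounded defect. I would apply this with $a = (g_1, e)$ and $b = (e, g_2)$, whose product is $(g_1, g_2)$, to obtain directly
\[
f(g_1, g_2) = f(g_1, e) + f(e, g_2) = f_1(g_1) + f_2(g_2).
\]

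The main obstacle, such as it is, lies in justifying the additivity of a homogeneous quasimorphism on commuting elements. One shows that since $a$ and $b$ commute, $(ab)^n = a^n b^n$, so by homogeneity and the quasimorphism property
\[
n f(ab) = f((ab)^n) = f(a^n b^n),
\]
and $|f(a^n b^n) - f(a^n) - f(b^n)| \leq D(f) = |n f(a) + n f(b) - f(a^n b^n)|^{\text{(after homogeneity)}}$ is bounded independently of $n$; dividing by $n$ and letting $n \to \infty$ forces $f(ab) = f(a) + f(b)$. This is the only place where real content enters, and it is precisely the well-known multiplicativity of homogeneous quasimorphisms on commuting (indeed on abelian) subgroups already invoked elsewhere in the paper, for instance in the proof of Lemma \ref{RayExistence}. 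Once this is in hand, the factorization is immediate and the lemma follows.
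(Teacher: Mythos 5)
Your proof is correct and follows essentially the same route as the paper: both restrict $f$ to the factors and use that a homogeneous quasimorphism is additive on the abelian subgroup generated by the commuting elements $(g_1,e)$ and $(e,g_2)$. The only difference is that you spell out the additivity via $(ab)^n = a^n b^n$ and the defect bound, where the paper simply invokes the standard fact that homogeneous quasimorphisms restrict to homomorphisms on abelian subgroups.
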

\begin{proof} Set $f_j := f|_{G_j}$ and let $g_1 \in G_1$, $g_2 \in G_2$. The subgroup of $G$ generated by $g_1$ and $g_2$ is abelian, hence $f$ restricts to a homomorphism on this subgroup. In particular, $f(g_1, g_2) = f(g_1)+f(g_2) = f_1(g_1)+f_2(g_2)$.
\end{proof}
Thus if $G= G_1 \times \dots \times G_m$ is simply-connected semisimple with simple factors $G_j$ and $G_1, \dots, G_l$ are Hermitian, while $G_{l+1}, \dots, G_m$ are not, then the space of homogeneous quasimorphism on $G$ is spanned by the pullbacks of the Guichardet-Wigner quasimorphisms of $G_1, \dots G_l$ to $G$. In particular, every homogeneous quasimorphism on $G$ factors through a homogeneous quasimorphism on $G_1 \times \dots \times G_l$, and $f$ can be aperiodic only if all almost simple factors of $G$ are Hermitian. From this observation a classification of aperiodic homogeneous quasimorphisms on semisimple groups is immediate. Indeed, assume $f|_{G}$ is aperiodic; then then universal covering $\widetilde{G}$ of $G$ is of the form
$\widetilde{G}= G_1 \times \dots \times G_l$ with $G_j$ simply-connected Hermitian simple. Moreover, $G = \widetilde{G}/\Gamma$, where
\[\Gamma :=\{g \in Z(G_1)\times \dots \times Z(G_l)\,|\,f(g) = 0\}.\]
The general classification is reduced to the semisimple case by means of the following observation:
\begin{proposition}\label{ClassifMain}
Let $G$ be a connected finite-dimensional Lie group and $f: G \to \R$ be an aperiodic quasimorphism. Then $G$ is reductive and the center of $G$ is at most one-dimensional.
\end{proposition}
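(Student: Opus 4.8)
The plan is to descend to the Lie algebra and play conjugation-invariance of $f$ off against amenability of nilpotent subgroups and the aperiodicity hypothesis. Write $\mathfrak g$ for the Lie algebra of $G$, let $\mathfrak n \subseteq \mathfrak g$ be its nilradical, and let $N \lhd G$ be the corresponding connected normal analytic subgroup. Since $N$ is nilpotent, hence amenable, the restriction $f|_N$ is a homogeneous quasimorphism on an amenable group and therefore, by the amenability fact recalled above, an honest homomorphism $N \to \R$. Being a continuous homomorphism of Lie groups it is smooth, so its differential $\lambda := d(f|_N)_e \colon \mathfrak n \to \R$ is a Lie-algebra homomorphism; in particular $\lambda|_{[\mathfrak n,\mathfrak n]} = 0$ and $f(\exp X) = \lambda(X)$ for every $X \in \mathfrak n$.

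Next I would extract the consequence of conjugation-invariance. For $X \in \mathfrak n$ and $g \in G$ one has $\Ad(g)X \in \mathfrak n$, and since homogeneous quasimorphisms are conjugation-invariant, $t\,\lambda(\Ad(g)X) = f\bigl(\exp(t\Ad(g)X)\bigr) = f\bigl(g\exp(tX)g^{-1}\bigr) = f\bigl(\exp(tX)\bigr) = t\,\lambda(X)$ for all $t$. Thus $\lambda$ is $\Ad(G)$-invariant; differentiating gives $\lambda([\mathfrak g,\mathfrak n]) = 0$, and the same invariance shows that $\mathfrak k := \ker\lambda \subseteq \mathfrak n$ is an $\Ad(G)$-invariant subspace, hence an ideal of $\mathfrak g$ contained in $\mathfrak n$.

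The decisive step uses aperiodicity. Let $K \lhd G$ be the connected analytic subgroup with Lie algebra $\mathfrak k$; it is normal because $\mathfrak k$ is an ideal and $G$ is connected. Since $K \subseteq N$ and $f|_N$ is a homomorphism with $f(\exp X) = \lambda(X) = 0$ for every $X \in \mathfrak k$, we get $f|_K \equiv 0$, so $K$ is a period group and aperiodicity forces $K = \{e\}$, i.e. $\mathfrak k = \ker\lambda = 0$. As $\ker\lambda$ has codimension at most one in $\mathfrak n$, this already yields $\dim\mathfrak n \le 1$; and if $\dim\mathfrak n = 1$ then $\lambda \neq 0$, so $\lambda([\mathfrak g,\mathfrak n]) = 0$ together with $\lambda|_{\mathfrak n}\neq 0$ forces $[\mathfrak g,\mathfrak n] \subsetneq \mathfrak n$, whence $[\mathfrak g,\mathfrak n]=0$ and $\mathfrak n$ is central. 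The structure-theoretic endgame is then routine: writing $\mathfrak r$ for the radical, $[\mathfrak r,\mathfrak r]$ is a nilpotent ideal of $\mathfrak g$ (nilpotent by Lie's theorem, an ideal since it is characteristic in the ideal $\mathfrak r$), hence $[\mathfrak r,\mathfrak r]\subseteq\mathfrak n$. If $\mathfrak n = 0$ then $\mathfrak r$ is abelian, so $\mathfrak r\subseteq\mathfrak n = 0$ and $\mathfrak g$ is semisimple; if $\dim\mathfrak n = 1$ then $[\mathfrak r,\mathfrak r]\subseteq\mathfrak n\subseteq\mathfrak z(\mathfrak g)$ gives $[\mathfrak r,[\mathfrak r,\mathfrak r]]=0$, so $\mathfrak r$ is nilpotent and $\mathfrak r = \mathfrak n$ is one-dimensional and central. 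In both cases the radical equals the center, so $\mathfrak g$ is reductive and $\dim Z(G) = \dim\mathfrak z(\mathfrak g) = \dim\mathfrak r \le 1$.

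I expect the main obstacle to be the bookkeeping around the subgroup $K$: one must check that it is genuinely normal (so that aperiodicity is applicable) and that $f$ vanishes on it, and both of these rest on the amenability-driven fact that $f|_N$ is a true homomorphism rather than merely a quasimorphism. Once $\dim\mathfrak n \le 1$ is established the rest is essentially forced, since a one-dimensional nilradical must be central and drags the whole radical onto a central line.
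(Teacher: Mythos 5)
Your proof is correct, and it reorganizes the paper's argument in a genuinely different way, although both run on the same engine: the restriction of a homogeneous quasimorphism to an amenable normal subgroup is a homomorphism, hence smooth with ${\rm Ad}(G)$-invariant derivative, and aperiodicity kills any normal subgroup on which $f$ vanishes. The paper applies this to the radical $RG$: assuming $\dim RG \geq 2$, it produces a connected codimension-one subgroup $H \lhd RG$ with $f|_H = 0$ and must then deal with the possibility that $H$ is not normal in $G$, which it does by a dichotomy --- either $H \lhd G$ (a period group, contradiction), or $\L r = \L h + {\rm Ad}(g)\L h$ for some $g$, forcing $df_0|_{\L r} = 0$ and making $RG$ itself a period group. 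You work with the nilradical $\L n$ instead and sidestep the dichotomy altogether: the ${\rm Ad}(G)$-invariance of $\lambda = d(f|_N)_e$ makes $\ker\lambda$ an ideal of $\L g$, so its analytic subgroup is automatically normal and aperiodicity applies in one stroke; the same observation would equally streamline the paper's radical-based proof. The price of choosing $\L n$ is a slightly longer endgame: you must drag the radical into the nilradical via $[\L r, \L r] \subseteq \L n$ and two-step nilpotency, whereas the paper, having bounded $\dim RG \leq 1$ directly, concludes at the group level using the Levi decomposition of the universal cover and the absence of nontrivial one-dimensional representations of semisimple groups. A merit of your version worth noting is that it never needs $N$ to be closed in $G$: amenability is invoked for $N$ as an abstract nilpotent group, and continuity of $f|_N$ in the intrinsic Lie topology already yields smoothness. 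One caveat on phrasing only: your closing summary that ``a one-dimensional nilradical must be central'' is false as a general structural fact --- in $\L{aff}(1)$, spanned by $X,Y$ with $[X,Y]=Y$, the nilradical is the one-dimensional non-central ideal $\R Y$ --- but this does not affect your proof, where centrality is correctly derived from $[\L g, \L n] \subseteq \ker\lambda = 0$, i.e.\ from the quasimorphism, not from structure theory.
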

\begin{proof} We use the basic fact that the restriction of a homogeneous quasimorphism to an amenable group is a homomorphism. We claim that this implies that the radical $RG$ of $G$ has dimension $\leq 1$. Assume otherwise; then $f|_{RG}$ is a homomorphism since $RG$ is amenable. Since $\dim RG \geq 2$ there is a non-trivial connected normal subgroup $H$ of codimension $1$ in $RG$, on which $f$ vanishes. If $H$ is normal in $G$, then it is a period subgroup of $f$. Otherwise there exists $g \in G$ such that $gHg^{-1} \neq H$. Now denote by $\L h$ and $\L r$ the Lie algebras of $H$ and $RG$ respectively; for dimension reasons we have $\L r = \L h + {\rm Ad}(g)(\L h)$. Since $f_0 = f|_{RG}$ is a homomorphism into $\R$, it is smooth with ${\rm Ad}$-invariant derivative $df_0$. Since $df_0|_{\L h} = 0$ this implies $df_0|_{\L r} = 0$. This in turn means that $RG$ is a period group for $f$ in this case. In any case, $f$ cannot be aperiodic. This contradiction establishes $\dim RG \leq 1$. If $RG$ is trivial, then $G$ is semisimple.
Otherwise the universal cover of $G$ is a semidirect product of $\R$ and a semisimple group. Since a semisimple group does not admit a one-dimensional non-trivial representation, this semidirect product is in fact direct. This shows that $G$ is reductive also in this case.
\end{proof}
Combining the previous observations and Proposition \ref{Factorization1} we finally obtain the following result:
\begin{theorem}\label{Classification}
Let $H$ be a connected finite-dimensional Lie group and $f:H \to \R$ a homogeneous quasimorphism. Then $f$ factors uniquely as
\[H \xto{p} G \xto{f_0} \R,\]
where $p$ is a continuous homomorphism of Lie groups and $f_0$ is an aperiodic homogeneous quasimorphism. The universal covering $\widetilde{G}$ of $G$ is of the form
\[\widetilde{G} = H \times G_1 \times \dots\times G_m,\]
where $H$ is either trivial or isomorphism to $\R$ and $G_1,\dots, G_m$ are simple Hermitian Lie groups. Moreover, the lift of $f_0$ to $\widetilde{G}$ is given by
\[(h, g_1 \dots, g_n) \mapsto f_H(h) + f_1(g_1) + \dots + f_m(g_m),\]
where $f_H$ is either trivial or an isomorphism and $f_j$ is some multiple of the Guichardet-Wigner quasimorphism on $G_j$ for $j= 1, \dots, m$.
\end{theorem}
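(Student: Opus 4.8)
The plan is to synthesize the statement from the structural results already in hand, with the only real work being to track the aperiodicity hypothesis faithfully through each successive decomposition. I would begin by applying Proposition \ref{Factorization1} to the given quasimorphism $f$: this produces the factorization $f = f_0 \circ p$ through an open surjective continuous homomorphism $p$ onto a Lie group $G$, with $f_0 \in \mathcal{HQM}(G)$ aperiodic, and at the same time supplies the uniqueness clause of the theorem. Everything else is then a statement about the pair $(G, f_0)$, and I may assume $f_0 \neq 0$ since $G \neq \{e\}$.

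Next I would determine the coarse structure of $G$. Since $f_0$ is a nonzero aperiodic quasimorphism on $G$, Proposition \ref{ClassifMain} shows that $G$ is reductive with center of dimension at most one. Passing to the universal cover therefore gives a direct-product splitting $\widetilde{G} = H \times \widetilde{G}_{\mathrm{ss}}$, in which the radical factor $H$ is either trivial or isomorphic to $\R$ and $\widetilde{G}_{\mathrm{ss}}$ is simply connected semisimple. Decomposing $\widetilde{G}_{\mathrm{ss}} = G_1 \times \dots \times G_m$ into its simple factors and iterating the splitting lemma for homogeneous quasimorphisms on direct products, the lift of $f_0$ to $\widetilde{G}$ takes the additive form $(h, g_1, \dots, g_m) \mapsto f_H(h) + f_1(g_1) + \dots + f_m(g_m)$, where $f_H$ is a homomorphism on $H$ and each $f_j \in \mathcal{HQM}(G_j)$.

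The heart of the argument is to identify each summand, and here aperiodicity does all the work. On the radical, a homogeneous quasimorphism on $\R$ is linear, so $f_H$ is either zero or an isomorphism; moreover, if $H \cong \R$ then $f_H$ cannot vanish, for otherwise the image of $H$ in $G$ is a nontrivial central, hence normal, subgroup on which $f_0$ vanishes, i.e. a nontrivial period subgroup, contradicting aperiodicity. Thus $f_H$ is an isomorphism exactly when $H$ is nontrivial. For the semisimple factors I would invoke Proposition \ref{BMS}(i): any non-Hermitian simple $G_j$ has finite center and admits no nonzero homogeneous quasimorphism, so $f_j = 0$; but then the image of $G_j$ in $G$ is again a nontrivial normal period subgroup, contradicting aperiodicity. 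Hence every $G_j$ is Hermitian, and Proposition \ref{BMS}(ii) then forces $\mathcal{HQM}(G_j)$ to be one-dimensional, spanned by the Guichardet--Wigner quasimorphism; consequently each $f_j$ is a multiple of it, as claimed.

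The one point demanding genuine care --- what I would regard as the main, if modest, obstacle --- is the passage from the factor decomposition of $\widetilde{G}$ to honest normal period subgroups of $G$ itself, since the splitting is a priori only available upstairs on the universal cover. To make the period-subgroup contradictions rigorous I would argue that each direct factor of $\widetilde{G}$ is normal, that $G = \widetilde{G}/\Gamma$ for a discrete central $\Gamma$ so that the image of each factor is normal in $G$, and that the lift of $f_0$ vanishing on a factor forces $f_0$ itself to vanish on its image (the covering restricts to a surjection onto that image). Once this bookkeeping is recorded, all remaining assertions follow immediately by combining Propositions \ref{Factorization1}, \ref{ClassifMain}, and \ref{BMS}.
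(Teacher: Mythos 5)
Your proposal is correct and follows essentially the same route as the paper, which proves Theorem \ref{Classification} precisely by combining Proposition \ref{Factorization1}, the product-splitting lemma, Proposition \ref{ClassifMain}, and Proposition \ref{BMS} in the order you use them. The one place you add detail --- descending the period-subgroup contradictions from the universal cover $\widetilde{G}$ to $G$ via normality of the images $\pi(G_j)$ and the identity $\tilde f_0 = f_0 \circ \pi$ --- is exactly the bookkeeping the paper leaves implicit, and you carry it out correctly.
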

Theorem \ref{Classification} reduces Problem \ref{MainProblem} for connected finite-dimensional Lie groups to the study of maximal orders associated with linear combinations of Guichardet-Wigner quasimorphisms. Here we will focus on the following subproblem:
\begin{problem*}\label{LieProblem} Describe explicitly the maximal orders associated to aperiodic Guichardet-Wigner quasimorphism.
\end{problem*}
In the remainder of this section we will obtain a complete answer to this problem for the universal covering of $SL_2(\R)$ and a partial answer in the general case.

\subsection{The case of simple Lie groups}
We now turn to the study of Problem \ref{LieProblem}. Thus let $G_0$ be a connected simple adjoint Hermitian Lie group, $\widetilde{G}$ its universal cover and $G := \widetilde{G}/\pi_1(G_0)_{\rm tors}$. We observe that $G \to G_0$ is an infinite cyclic covering, while $\widetilde{G} \to G$ is a finite covering. All aperiodic homogeneous quasimorphisms on $G$ are of the form $f = \lambda \cdot \mu_G$, where $\mu_G:G \to R$ is the aperiodic Guichardet-Wigner quasimorphism as given by \eqref{GW}. We want to determine the maximal orders corresponding to these quasimorphisms. Since the maximal order corresponding to a quasimorphism is invariant under taking positive multiples, it suffices to consider $\pm \mu_G$; moreover, if $G^+_{\max}$ denotes the maximal order semigroup for $\mu_G$, then the maximal order semigroup for $-\mu_G$ is given by $(G^+_{\max})^{-1}$. It thus suffices to determine $G^+_{\max}$. This semigroup has the following strong maximality property:
\begin{proposition}
The maximal order semigroup $G^+_{\max}$ associated with $\mu_G$ is a maximal conjugation-invariant semigroup of $G$ in the sense that there does not exist a conjugation-invariant semigroup $S$ with $G^+_{\max} \subsetneq S \subsetneq G$.
\end{proposition}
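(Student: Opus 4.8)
The plan is to bootstrap from the pointed maximality already established in Proposition \ref{PointedMaximality}: the only genuinely new phenomenon is that a conjugation-invariant semigroup $S$ lying strictly between $G^+_{\max}$ and $G$ need no longer be pointed. So suppose $G^+_{\max} \subsetneq S \subsetneq G$ with $S$ conjugation-invariant, and set $H := S \cap S^{-1}$. If $H = \{e\}$ then $S$ is pointed and Proposition \ref{PointedMaximality} already gives a contradiction; thus I may assume $H \neq \{e\}$. Since $S$ is conjugation-invariant, $H$ is a normal subgroup of $G$, and the entire difficulty is to exploit this one normal subgroup. Throughout I write $p\colon G \to G_0$ for the infinite cyclic covering projection; as $G_0$ is adjoint its kernel is exactly the center $Z(G)$, which is infinite cyclic, say $Z(G) = \langle z_0 \rangle \cong \Z$. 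Because $\mu_G$ is aperiodic it cannot vanish on the nontrivial normal subgroup $Z(G)$, so $\mu_G(z_0) \neq 0$; after replacing $z_0$ by $z_0^{-1}$ if necessary I assume $\mu_G(z_0) > 0$.

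Two structural facts drive the argument. First, $p(G^+_{\max}) = G_0$. Indeed, for any $g \in G$ the elements $g$ and $z_0$ commute, so $\mu_G(gz_0^k) = \mu_G(g) + k\,\mu_G(z_0)$; choosing $k$ large makes this exceed $D(\mu_G)$, whence $gz_0^k \in G^+_{\max}$ by the inclusion $\{h \in G \,|\, \mu_G(h) \geq D(\mu_G)\} \subset G^+_{\max}$ established in the proof of Proposition \ref{CalegariOrder}. Thus every $Z(G)$-coset meets $G^+_{\max}$. Second, $G$ admits no nonzero homomorphism to $\Z$, even as an abstract group: such a homomorphism would be a homogeneous quasimorphism of zero defect, hence a multiple of $\mu_G$ by Proposition \ref{BMS}(ii) (which applies to $G$ viewed discretely), contradicting the fact that $\mu_G$ is a genuine quasimorphism — a homomorphism would vanish on its kernel, a nontrivial normal subgroup, violating aperiodicity.

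With these in hand I first show $Z(G) \cap H \neq \{e\}$. Since the adjoint group $G_0$ is simple as an abstract group (classical), the normal subgroup $p(H)$ is either trivial or all of $G_0$. If $p(H) = \{e\}$ then $\{e\} \neq H \subset Z(G)$ and we are done. If $p(H) = G_0$ then $H\,Z(G) = G$; were $H \cap Z(G) = \{e\}$, the centrality of $Z(G)$ would force an internal direct product $G \cong H \times Z(G)$, yielding a surjection $G \to Z(G) \cong \Z$ and contradicting the second structural fact. Either way $z_0^m \in H$ for some $m \geq 1$, so $z_0^{\pm m} \in S$. I now upgrade this to $Z(G) \subset S$: for arbitrary $k \in \Z$ choose $N$ so large that $\mu_G(z_0^{k+Nm}) = (k+Nm)\,\mu_G(z_0) \geq D(\mu_G)$; then $z_0^{k+Nm} \in G^+_{\max} \subset S$ while $(z_0^{-m})^N \in S$, so $z_0^k = z_0^{k+Nm}(z_0^{-m})^N \in S$. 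Finally, for any $g \in G$ the first structural fact provides $g_+ \in G^+_{\max}$ and $z \in Z(G)$ with $g = g_+ z$, whence $g \in G^+_{\max}\cdot Z(G) \subset S$. Thus $S = G$, contradicting $S \subsetneq G$.

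I expect the crux to be the dichotomy in the third paragraph, and specifically ruling out the splitting $G \cong H \times Z(G)$ when $p(H) = G_0$. This is where the rigidity of $\mu_G$ enters essentially: it is precisely because $\mu_G$ is a true quasimorphism rather than a homomorphism that $G$ cannot surject onto its center, and this is what prevents a large normal subgroup $H$ from being disjoint from $Z(G)$. The remaining ingredients are soft: abstract simplicity of the adjoint $G_0$ is classical, and once $Z(G) \subset S$ is known the conclusion $S = G$ follows formally from $p(G^+_{\max}) = G_0$.
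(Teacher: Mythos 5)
Your proposal is correct and takes essentially the same route as the paper: pointed maximality (Proposition~\ref{PointedMaximality}) makes $H = S \cap S^{-1}$ a nontrivial normal subgroup, simplicity forces $H$ to meet the center $Z(G) \cong \Z$, and then central elements of $S$ realizing both signs of $\mu_G$, combined with the inclusion $\{g \in G \,|\, \mu_G(g) \geq D(\mu_G)\} \subset G^+_{\max}$, absorb all of $G$ into $S$. The only real difference is your handling of the case $p(H) = G_0$: the paper invokes that a proper normal subgroup of the simple Lie group $G$ is discrete, hence central, while you rule out the splitting $G \cong H \times Z(G)$ via the discrete-group quasimorphism classification of Proposition~\ref{BMS} --- valid, but heavier than needed, since $p(H) = G_0$ with $Z(G)$ central already yields $H \supseteq [G,G] = G$ directly.
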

\begin{proof} If $G^+_{\max} \subsetneq S$, then $S$ cannot be pointed by Proposition \ref{PointedMaximality}. Thus $H := S \cap S^{-1}$ is a non-trivial normal subgroup of $G$. Since $G$ is simple, it either coincides with $G$ or is discrete. Assume the latter; then $H$ is central in $G$. However, $Z(G) = \ker(G \to G_0) \cong \Z$. Since $H$ is a non-trivial subgroup, it must thus be of finite index in $Z(G)$. We deduce that $S$ contains an element $g$ with $\mu_{G}(g) < 0$. Now every $x \in G$ can be written as $x = g^{-n}g^nx$ and for any $C> 0$ we can chosse $n$ so large that $\mu_{G}(g^nx) > C$. We can thus ensure that $g^nx \in \hat G^+_{\max} \subset S$ by choosing $n$ large enough; this, however, implies $x = g^{-n}g^nx \in S$, so $S = G$ and hence $H=G$ contradicting discreteness. We thus have $S = G$ whenever $G^+_{\max} \subsetneq S$.
\end{proof}
We now determine the shape of $G^+_{\max}$ at least infinitesimally. For this we compare $G^+_{\max}$ with continuous orders on $G$. Recall that a bi-invariant order on a Lie group $G$ is called \emph{continuous} if its order semigroup is closed and locally topologically generated. By a result of Neeb \cite{Neeb} this implies that $G^+$ is a \emph{Lie semigroup}. This means that $G^+$ can be reconstructed from its Lie wedge
\[{\bf L}(G^+) := \{X \in \L g\,|\, \forall t >0: \;\exp(tX) \in G^+\}\]
as the closure of the semigroup generated by $\exp({\bf L}(G^+))$, i.e.
\[G^+ = \overline{\langle \exp({\bf L}(G^+))\rangle}.\]
Now let $G$ be a Hermitian Lie group admitting an aperiodic quasimorphism. By results of Ol{\cprime}shanski{\u\i} \cite{Olshanski} there is a unique (up to inversion) maximal continuous bi-invariant order on $G$, which we denote by $G^+_{cont}$. The Lie wedge $C^+ := {\bf L}(G^+_{cont})$ is an ${\rm Ad}$-invariant cone in $\L g$. Ol{\cprime}shanski{\u\i} has proved\footnote{Strictly speaking, Ol{\cprime}shanski{\u\i} always works with simply-connected groups, but it is easy to see that his results carry over to the case considered here. See \cite{HartnickDiss} for details.} that this cone is the  maximal ${\rm Ad}$-invariant pointed cone in $\L g$ if and only if $G$ is of tube type; in the non-tube type case he proved that for every ${\rm Ad}$-invariant cone $C \supsetneq C^+$ we have
\begin{eqnarray}\label{OlshanskiMaximality}
 \overline{\langle \exp(C)\rangle} = G.
\end{eqnarray}
The latter results thus holds independent of whether $G$ is of tube type or not. Now we can prove:
\begin{proposition}\label{SameWedge} The semigroups $G^+_{cont}$ and $G^+_{\max}$ have the same Lie wedge $C^+$. Moreover,  $G^+_{cont} \subseteq G^+_{\max}$.
\end{proposition}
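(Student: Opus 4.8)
The plan is to deduce both assertions from the single inclusion $G^+_{cont}\subseteq G^+_{\max}$ together with Ol{\cprime}shanski{\u\i}'s maximality property \eqref{OlshanskiMaximality} of the cone $C^+$. First I would establish the inclusion itself. The semigroup $G^+_{cont}$ is closed, pointed and conjugation-invariant, being the order semigroup of a continuous bi-invariant order. The essential extra input, which I would take from \cite{BSH}, is that $G^+_{cont}$ is sandwiched by $\mu_G$, that is $G^+_{cont}\in\mathcal M_{\mu_G}(G)$; concretely one produces a constant $C_1>0$ with $\{g\in G\,|\,\mu_G(g)\geq C_1\}\subseteq G^+_{cont}$ and appeals to Lemma \ref{NoUpperBound}. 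Granting this, Proposition \ref{CalegariOrder} gives at once $G^+_{cont}\subseteq G^+_{\max}$. Since Lie wedges are monotone under inclusion of subsemigroups --- if $\exp(tX)\in G^+_{cont}$ for all $t>0$ then \emph{a fortiori} $\exp(tX)\in G^+_{\max}$ --- this already yields $C^+={\bf L}(G^+_{cont})\subseteq {\bf L}(G^+_{\max})$.

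For the reverse inclusion I would work with $W:={\bf L}(G^+_{\max})$. The first task is to check that $W$ is an ${\rm Ad}$-invariant pointed wedge. It is a closed convex cone because $G^+_{\max}$ is closed, by the general theory of tangent wedges of closed subsemigroups \cite{HHL}; it is ${\rm Ad}$-invariant because $g\exp(tX)g^{-1}=\exp(t\,{\rm Ad}(g)X)$ and $G^+_{\max}$ is conjugation-invariant; and it is pointed because if $X\neq 0$ satisfied $\pm X\in W$ then for small $t>0$ both $\exp(tX)$ and $\exp(-tX)=\exp(tX)^{-1}$ would lie in $G^+_{\max}$, forcing $\exp(tX)\in G^+_{\max}\cap (G^+_{\max})^{-1}=\{e\}$, a contradiction. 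Now I argue by contradiction: suppose $W\supsetneq C^+$. Then \eqref{OlshanskiMaximality} applies to the ${\rm Ad}$-invariant cone $W$ and yields $\overline{\langle\exp(W)\rangle}=G$. On the other hand $\exp(W)\subseteq G^+_{\max}$ directly from the definition of the wedge, and as $G^+_{\max}$ is a closed semigroup we conclude $\overline{\langle\exp(W)\rangle}\subseteq G^+_{\max}$. Hence $G^+_{\max}=G$, which is absurd since $G^+_{\max}$ is pointed and $G\neq\{e\}$. Therefore $W=C^+$, and combined with the first paragraph this proves ${\bf L}(G^+_{cont})={\bf L}(G^+_{\max})=C^+$.

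The hard part is the very first step, namely the passage from ``$G^+_{cont}$ is a continuous order'' to ``$G^+_{cont}$ is sandwiched by $\mu_G$''. This is the genuinely analytic point --- morally, that the relative growth of the continuous order is a positive multiple of the Guichardet--Wigner quasimorphism --- and I do not expect it to follow from Ol{\cprime}shanski{\u\i}'s cone theory alone, since a priori a continuous order need not be comparable with $\mu_G$ at all. Once this compatibility is in hand, everything else is formal: the only substantial ingredient is Ol{\cprime}shanski{\u\i}'s maximality \eqref{OlshanskiMaximality}, which is exactly what forbids enlarging $C^+$ to a bigger ${\rm Ad}$-invariant pointed cone without generating all of $G$.
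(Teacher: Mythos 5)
Your proposal is correct and follows essentially the same route as the paper: cite \cite[Lemma 3.4]{BSH} to get $G^+_{cont}\in\mathcal M_{\mu_G}(G)$, deduce $G^+_{cont}\subseteq G^+_{\max}$ and hence ${\bf L}(G^+_{cont})\subseteq{\bf L}(G^+_{\max})$, then rule out a proper inclusion via Ol{\cprime}shanski{\u\i}'s maximality \eqref{OlshanskiMaximality} against the pointedness (and closedness) of $G^+_{\max}$. You merely spell out details the paper leaves implicit (that ${\bf L}(G^+_{\max})$ is a closed ${\rm Ad}$-invariant wedge); note that its pointedness, which you also verify, is not actually needed, since \eqref{OlshanskiMaximality} is stated for arbitrary ${\rm Ad}$-invariant cones strictly containing $C^+$.
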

\begin{proof} It was established in \cite[Lemma 3.4]{BSH} that $ G^+_{cont} \in \mathcal M_{\mu_{  G}}(  G)$. Consequently, $  G^+_{cont} \subseteq   G^+_{\max}$ and thus ${\bf L}(  G^+_{cont}) \subseteq {\bf L}(  G^+_{\max})$. If the inclusion was proper, then $C := {\bf L}(  G^+_{\max})$ would satisfy \eqref{OlshanskiMaximality}, contradicting the pointedness of $  G^+_{\max}$.
\end{proof}
As mentioned in the introduction, we believe that $G^+_{cont}$ and $G^+_{\max}$ coincide not only infinitesimally, but even globally. For $SL_2(\R)$ we present a proof in the next section. The following proposition collects some evidence for our conjecture in the general case:
\begin{proposition}\label{Evidence}
The semigroups $G^+_{cont}$ and $G^+_{\max}$ share the following properties:
\begin{itemize}
\item[(i)] $G^+_{cont}$ and $G^+_{\max}$ have the same Lie wedge.
\item[(ii)] $G^+_{cont}$ and $G^+_{\max}$ are path-connected.
\item[(iii)] $G^+_{cont}$ and $G^+_{\max}$ have dense path-connected interiors.
\item[(iv)] $G^+_{cont}$ and $G^+_{\max}$ are closed.
\item[(v)] Let $g \in G$; then $g^n \in {\rm Int}(G^+_{cont})$ for some $n \in \mathbb N$, if and only if $g^m \in {\rm Int}(G^+_{\max})$ for some $m \in \mathbb N$.
\end{itemize}
\end{proposition}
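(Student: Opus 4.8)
The plan is to verify the five shared properties one at a time, leaning heavily on the abstract machinery already developed in Section \ref{AbstractNonsense} together with the Lie-semigroup results of Ol{\cprime}shanski{\u\i} and Neeb. Property (i) is already Proposition \ref{SameWedge}, so nothing remains to be done there. For (iv), the closedness of $G^+_{cont}$ is immediate from the definition of a continuous order (its order semigroup is closed by hypothesis), while the closedness of $G^+_{\max}$ was observed right after Proposition \ref{CalegariOrder} and follows either from the explicit formula \eqref{GMax} and continuity of $f$, or from the abstract maximality in Proposition \ref{PointedMaximality}.

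The substance lies in (ii), (iii) and (v). First I would treat $G^+_{cont}$: as a Lie semigroup it equals $\overline{\langle \exp(C^+)\rangle}$ for the pointed $\Ad$-invariant cone $C^+$, and since $C^+$ is convex with nonempty interior (because $\mu_G$ is a nonzero aperiodic quasimorphism, so $C^+ \neq \{0\}$), the one-parameter semigroups $t \mapsto \exp(tX)$ for $X \in \mathrm{Int}(C^+)$ give rays into $\mathrm{Int}(G^+_{cont})$; path-connectedness of $G^+_{cont}$ and of its interior follows from the convexity of the cone and connectedness of $G$, and density of the interior is the standard statement that a generating Lie wedge with interior points yields a semigroup that is the closure of its interior. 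The cleanest route for $G^+_{\max}$ is to invoke the transfer results of Section \ref{AbstractNonsense}: by Proposition \ref{SameWedge} we have $G^+_{cont} \subseteq G^+_{\max}$ with both in $\mathcal M_{\mu_G}(G)$, and $G^+_{cont}$ satisfies hypothesis $(\dagger\dagger)$ of Theorem \ref{Recognition} (take $T(t) = \exp(tX)$ for a fixed interior ray). Corollary \ref{RecognitionMain} then propagates $(\dagger\dagger)$ from $G^+_{cont}$ to $G^+_{\max}$, giving at once that $\mathrm{Int}(G^+_{\max})$ is path-connected and dense in $G^+_{\max}$, which is (iii); path-connectedness of $G^+_{\max}$ itself in (ii) then follows because every point of the closed set $G^+_{\max}$ is a limit of points in its path-connected dense interior, and one can join any boundary point to the interior by pushing along the ray $\gamma_g(t) = gT(t)$ as in the proof of Corollary \ref{RecognitionMain}.

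For (v) the point is that $g^n \in \mathrm{Int}(G^+_{cont})$ for some $n$ is, via Lemma \ref{TrivialTopologicalStuff}(i) together with Lemma \ref{DominantsFromOSG}, equivalent to $\mu_G(g) > 0$, and likewise $g^m \in \mathrm{Int}(G^+_{\max})$ for some $m$ is equivalent to $\mu_G(g) > 0$ using the same two lemmas applied to $G^+_{\max}$. Concretely, if $\mu_G(g) > 0$ then homogeneity gives $\mu_G(g^n) = n\mu_G(g) \to \infty$, so for large $n$ the sandwich condition forces $g^n$ into $\{x \mid \mu_G(x) \geq C\} \subset \mathrm{Int}(G^+)$ by Lemma \ref{TrivialTopologicalStuff}(ii), and this works verbatim for both semigroups; conversely $g^n \in \mathrm{Int}(G^+) \subset G^{++}$ forces $\mu_G(g^n) > 0$ by Lemma \ref{DominantsFromOSG}, hence $\mu_G(g) > 0$. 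Thus both conditions decouple into the single condition $\mu_G(g) > 0$, and (v) follows.

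The main obstacle I anticipate is not any single property but the bookkeeping needed to apply Theorem \ref{Recognition} and Corollary \ref{RecognitionMain} cleanly, since those results are phrased for a semigroup already known to be maximal-compatible; the care lies in checking that $G^+_{cont}$ genuinely satisfies $(\dagger)$ as well (its dominant set is open, equal to $\mathrm{Int}(G^+_{cont})$ by Lemma \ref{TrivialTopologicalStuff}(i) and the characterization in Lemma \ref{DominantsFromOSG}), so that the hypotheses of the transfer machinery are met. Once that is in place, everything else is a matter of assembling the already-proven lemmas rather than fresh argument.
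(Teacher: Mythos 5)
Your proposal is correct and follows essentially the same route as the paper: (i) is Proposition \ref{SameWedge}; (iv) is immediate from the definitions; (iii) for $G^+_{cont}$ is the standard Lie semigroup theory (which the paper packages as Lemma \ref{Smoothness}, citing Hilgert--Neeb, rather than re-deriving it from convexity of $C^+$ as you sketch) and then transfers to $G^+_{\max}$ via Corollary \ref{RecognitionMain}; (ii) follows from (iii) (the paper invokes the fact that the closure of a path-connected open subset of a manifold is path-connected, whereas you push boundary points into the interior along $\gamma_g(t) = gT(t)$ --- your variant is if anything slightly more careful); and (v) is exactly the paper's argument, reducing both conditions to $\mu_G(g) > 0$ via Lemmas \ref{TrivialTopologicalStuff} and \ref{DominantsFromOSG}.

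One correction to your final paragraph: you do \emph{not} need, and should not claim, that $G^+_{cont}$ satisfies $(\dagger)$. Corollary \ref{RecognitionMain} requires only that $G^+ \in \mathcal M_f(G)$ be closed and satisfy $(\dagger\dagger)$; the open-dominants hypothesis $(\dagger)$ enters only in Theorem \ref{Recognition} itself. More importantly, your assertion that the dominant set of $G^+_{cont}$ equals ${\rm Int}(G^+_{cont})$ and is open does not follow from the lemmas you cite: Lemma \ref{TrivialTopologicalStuff}(i) gives only the inclusion ${\rm Int}(G^+_{cont}) \subset G^{++}_{cont}$, and Lemma \ref{DominantsFromOSG} identifies $G^{++}_{cont}$ with $\{g \in G^+_{cont} \mid \mu_G(g) > 0\}$, which may well contain boundary points of $G^+_{cont}$ when $G$ is not locally isomorphic to $SL_2(\R)$. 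Indeed the paper states explicitly in the introduction that the open-dominants hypothesis fails for maximal pointed invariant Lie subsemigroups of general Hermitian Lie groups --- if your claim were true, Theorem \ref{Recognition} would settle Conjecture \ref{MainConjecture}, which is precisely what the authors cannot do. Since Proposition \ref{Evidence} never uses $(\dagger)$, this misstep is harmless here, but it is worth recognizing that it is exactly the obstruction to extending the $SL_2(\R)$ argument.
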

The proof uses the following fundamental results from Lie semigroup theory \cite[Cor. 3.11 and Prop. 3.13]{HiNe}:
\begin{lemma}\label{Smoothness}
Let $S$ be a conjugation-invariant Lie semigroup in a simple Lie group. Then ${\rm Int}(S)$ is path-connected and dense in $S$.
\end{lemma}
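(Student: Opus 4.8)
The plan is to deduce the statement from the structure theory of Lie semigroups, using simplicity and conjugation--invariance only to reduce to the case of a \emph{generating} tangent wedge, after which the two conclusions follow from the exponential description of $S$. Write $W := {\bf L}(S)$ for the tangent wedge, so that $S = \overline{\langle \exp(W)\rangle}$. Since $S$ is conjugation--invariant, $W$ is ${\rm Ad}$--invariant, and hence both its linear span and its edge $W \cap (-W)$ are ${\rm Ad}$--invariant subspaces of $\L g$, i.e. ideals. As $\L g$ is simple, each is either $\{0\}$ or all of $\L g$. If $W = \L g$ then $S = G$ and the assertion is trivial; we may therefore assume $W \neq \L g$, so that $W$ is pointed, and (discarding the degenerate case $W = \{0\}$, i.e. $S = \{e\}$, in which ${\rm Int}(S) = \emptyset$) the linear span of $W$ equals $\L g$. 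Thus the essential reduction is that \emph{$W$ contains a vector-space basis of $\L g$}; this is precisely the generating hypothesis needed below, and it is the only place simplicity enters.

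Next I would record two elementary facts about ${\rm Int}(S)$. Fixing a basis $X_1,\dots,X_d \in W$ of $\L g$, consider the chart $\Phi(t_1,\dots,t_d) := \exp(t_1X_1)\cdots\exp(t_dX_d)$. Its differential at $0$ carries the standard basis to $X_1,\dots,X_d$, so $\Phi$ is a local diffeomorphism near $0$; since $\exp(t_iX_i)\in\exp(W)\subseteq S$ for $t_i\geq 0$, the open set $\Phi(\{0<t_i<\varepsilon\})$ lies in $S$. This shows ${\rm Int}(S)\neq\emptyset$ and, letting the $t_i\to 0$, that ${\rm Int}(S)$ accumulates at $e$. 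Secondly, ${\rm Int}(S)$ is a two--sided ideal of $S$: for $b\in S$ the left and right translations by $b$ are homeomorphisms of $G$, so $b\,{\rm Int}(S)$ and ${\rm Int}(S)\,b$ are open subsets of $S$ and therefore contained in ${\rm Int}(S)$.

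Density of ${\rm Int}(S)$ is then immediate: choosing a net $a_i\in{\rm Int}(S)$ with $a_i\to e$ (possible by the first fact), for every $s\in S$ we have $sa_i\in S\cdot{\rm Int}(S)\subseteq{\rm Int}(S)$ and $sa_i\to s$, so $S=\overline{{\rm Int}(S)}$. For path--connectedness I would first reduce to \emph{connectedness}: as an open subset of the (locally path--connected) manifold $G$, ${\rm Int}(S)$ has open, path--connected components, hence it is path--connected as soon as it is connected. To prove connectedness I would use the local model of a Lie semigroup at the identity: in a suitable chart around $e$ the set $S$ coincides with $\exp(W\cap U')$ and ${\rm Int}(S)$ with $\exp({\rm int}(W)\cap U')$; since ${\rm int}(W)$ is a convex cone, the local piece $V := {\rm Int}(S)\cap N$ is connected. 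It then suffices to connect an arbitrary $g\in{\rm Int}(S)$ to $V$ inside ${\rm Int}(S)$. Writing $g$ as a limit of products of elements of $\exp(W)$ and using that ${\rm Int}(S)$ is open, I would approximate $g$ within a path--ball in ${\rm Int}(S)$ by a product $w=\exp(Y_1)\cdots\exp(Y_m)$, and then contract $w$ toward $e$ along $s\mapsto \exp(sY_1)\cdots\exp(sY_m)$, correcting by a fixed small interior factor so that the curve stays inside the ideal ${\rm Int}(S)$; as $s\to 0$ the curve enters the connected piece $V$, placing $g$ in its component.

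The main obstacle is the last step, namely the two foundational inputs from Lie semigroup theory: that for a generating Lie wedge $W$ the generated semigroup $\overline{\langle\exp(W)\rangle}$ has interior whose exponential image realizes ${\rm int}(W)$, and the local homeomorphism near $e$ between ${\rm Int}(S)$ and ${\rm int}(W)$. These require the tangent--wedge machinery and are exactly the content of the cited results of Hilgert--Neeb. The self--contained part of the argument is the reduction via the ideal property of ${\rm Ad}$--invariant cones in a simple Lie algebra, which supplies the generating hypothesis, together with the ideal property of ${\rm Int}(S)$; from these, density follows directly and path--connectedness reduces to the local analysis at the identity.
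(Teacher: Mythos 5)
Your proposal is correct in substance, but it takes a genuinely different route from the paper, for the simple reason that the paper contains no proof at all: Lemma \ref{Smoothness} is quoted as a specialization of the fundamental results \cite[Cor.~3.11 and Prop.~3.13]{HiNe}, with even the reduction to the generating case left implicit. You make that reduction explicit and correct -- conjugation-invariance makes ${\bf L}(S)$ an ${\rm Ad}$-invariant wedge, so its span and edge are ideals, and simplicity forces the trichotomy $S = G$, $S = \{e\}$, or ${\bf L}(S)$ pointed and generating; your aside that the degenerate case $S=\{e\}$ must be discarded is a legitimate observation (the statement fails literally there, consistent with the generating hypothesis in \cite{HiNe}, and harmless here since the paper only applies the lemma to $G^+_{cont}$, whose wedge is the generating cone $C^+$). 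Your density argument is then complete and elementary: the product chart $\Phi(t_1,\dots,t_d)=\exp(t_1X_1)\cdots\exp(t_dX_d)$ built from a basis of $\L g$ inside $W$ gives ${\rm Int}(S)\neq\emptyset$ with $e\in\overline{{\rm Int}(S)}$, and the ideal property of ${\rm Int}(S)$ finishes. This buys a self-contained proof of half the lemma where the paper buys only a citation.

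The one genuine flaw is your local model at the identity: the claim that in a chart around $e$ one has $S=\exp(W\cap U')$ and ${\rm Int}(S)=\exp({\rm int}(W)\cap U')$ is the assertion of \emph{local exponentiality}, which is a special property requiring proof and not a general feature of Lie semigroups (already in the Heisenberg group, the closed semigroup generated by $\exp(\R^{\geq 0}X)\cup\exp(\R^{\geq 0}Y)$ is a Lie semigroup with nonempty interior whose tangent wedge spans only a plane, so $S$ near $e$ looks nothing like $\exp(W\cap U')$); where such statements hold for invariant cones they rest on nontrivial globality theorems, i.e.\ exactly the machinery you were trying to avoid. Fortunately your own contraction argument makes this step superfluous, because connecting everything to a \emph{single} interior point suffices and no connected local piece $V$ is needed: given $g\in{\rm Int}(S)$, pick a path-connected open ball $B$ with $g\in B\subset{\rm Int}(S)$ and, by density of $\langle\exp(W)\rangle$ in $S$, a word $w=\exp(Y_1)\cdots\exp(Y_m)\in B$ with $Y_i\in W$; then with $u:=\Phi(t_0,\dots,t_0)$ the curve $s\mapsto w\,\Phi(s,\dots,s)$, $s\in[0,t_0]$, joins $w$ to $wu$ inside ${\rm Int}(S)$ (ideal property, and $w\in{\rm Int}(S)$ at $s=0$), and the curve $s\mapsto\exp(sY_1)\cdots\exp(sY_m)\,u$, $s\in[0,1]$, joins $wu$ to $u$ inside $S\cdot{\rm Int}(S)\subset{\rm Int}(S)$. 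Since every point of ${\rm Int}(S)$ reaches the same $u$, path-connectedness follows using only the definition $S=\overline{\langle\exp({\bf L}(S))\rangle}$ and your two elementary facts. I recommend deleting the local-model sentence and stating the argument in this form, at which point your proof is fully self-contained and strictly more informative than the paper's citation.
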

\begin{proof}[Proof of Proposition \ref{Evidence}] (i) was established in Proposition \ref{SameWedge}. (iii) for $G^+_{cont}$ is a special case of Lemma \ref{Smoothness}; for $G^+_{\max}$ the corresponding properties then follow from Corollary \ref{RecognitionMain}. (ii) is a consequence of (iii), since the closure of a path-connected open subset of a manifold is path-connected. (iv) is obvious from the definitions. Finally, by Lemma \ref{TrivialTopologicalStuff} and Lemma \ref{DominantsFromOSG} both conditions in (v) are equivalent to $\mu_G(g) > 0$.
\end{proof}
We do not know any example of a closed semigroup of $G$ besides $G^+_{cont}$, which has dense path-connected interior and shares both the Lie wedge and the divisible hull of the interior with $G^+_{cont}$. It thus seems possible that Proposition \ref{Evidence} already implies Conjecture \ref{MainConjecture}. In any case, we do not see how to  prove this.

\subsection{Orders on the universal covering of $SL_2(\R)$}\label{SecSL2case}

Throughout this subsection let $G$ denote the universal covering group of $SL_2(\R)$. Our goal is still to describe the maximal order semigroup $G^+_{\max}$  associated with the Guichardet-Wigner quasimorphism on $G$; we first provide a geometric description. To this end we observe that the group $SL_2(\R)$ acts on the circle, extending its isometric action on the Poincar\'e disc. The corresponding homomorphism into ${\rm Homeo}^+(S^1)$ lifts to an embedding $G \hookrightarrow H$, hence the translation quasimorphism restricts to a homogeneous quasimorphism on $G$, which is nonzero, since it does not vanish on the universal cover of the rotation group. By the classification, this restriction is a multiple of the Guichardet-Wigner quasimorphism, and we can choose our sign in such a way that it is a positive multiple. Then $G^+_{\max}$ is the maximal order semigroup associated with the restriction of the translation number, and we obtain the following special case of Proposition \ref{SubgroupsOfH}:
\begin{corollary}\label{SL2Max1} The unique maximal element of $\mathcal M_{T_G}(G)$ is given by
\[G^{+}_{\max} := \{g \in G \,|\, \forall x \in \R:\, g.x \geq x\};\]
its dominant set is given by
\begin{eqnarray*}
G^{++}_{\max} := \{g \in G\,|\, T(g) > 0\} = \{g \in G \,|\, \forall x \in \R:\, h.x > x\}.
\end{eqnarray*}
\end{corollary}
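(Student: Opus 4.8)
The plan is to recognize Corollary \ref{SL2Max1} as a direct application of Proposition \ref{SubgroupsOfH} to the specific subgroup $G \hookrightarrow H$, where $G$ is the universal cover of $SL_2(\R)$ embedded in $H = \widetilde{{\rm Homeo}^+(S^1)}$ via the circle action. To invoke Proposition \ref{SubgroupsOfH} I need to verify its two hypotheses for this $G$: first, that the restriction $T|_G$ is aperiodic, and second, that $G$ contains small translations in the sense defined just before that proposition.

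First I would establish aperiodicity of $T|_G$. The preceding discussion already notes that $T|_G$ is a positive multiple of the Guichardet-Wigner quasimorphism $\mu_G$, and that $\mu_G$ is aperiodic on $G = \widetilde{SL_2(\R)}$ by the general classification (this group is $\widetilde{G}/\pi_1(G_0)_{\rm tors}$ with $\pi_1(PSL_2(\R))_{\rm tors}$ trivial, so $G = \widetilde{SL_2(\R)}$ and $Z(G) \cong \Z$ is torsion-free). Alternatively, aperiodicity follows from the earlier Lemma about simple groups and nontrivial central extensions: $PSL_2(\R)$ is simple, $G \to PSL_2(\R)$ is a nontrivial central extension, and $T$ restricts to an injective homomorphism on the infinite cyclic center (which is generated by a full lift of the rotation by $2\pi$, on which $T$ takes a nonzero integer value). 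Hence $T|_G$ is aperiodic.

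The main point to check is that $G$ contains small translations. Here the key geometric fact is that the maximal compact $SO(2) \subset SL_2(\R)$ acts on the Poincar\'e disc by rotations, and on the boundary circle $S^1$ precisely as the rotation group. Lifting to $H$, the universal cover $\widetilde{SO(2)} \subset G$ maps onto the subgroup $\{\tau_y \mid y \in \R\}$ of integer-commuting translations of $\R$: the lift of rotation by angle $\theta$ is exactly the translation $\tau_{\theta/2\pi}$ (up to the chosen normalization of coordinates on the circle). Since $\widetilde{SO(2)} \cong \R$ sits inside $G$ and surjects onto all translations $\tau_y$, in particular $\tau_\epsilon \in G$ for arbitrarily small $\epsilon > 0$, so $G$ contains small translations. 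I expect this identification of the lifted rotation subgroup with the translation subgroup to be the step requiring the most care, since it hinges on matching the parametrization of the boundary circle action with the translation coordinate on $\R$; but it is essentially the standard fact that rotations of the disc act as rigid rotations of the Shilov boundary.

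With both hypotheses verified, Proposition \ref{SubgroupsOfH} immediately yields the stated formula for $G^+_{\max}$, and Corollary \ref{DominantsH} gives the description of the dominant set $G^{++}_{\max}$, with the equality of the two descriptions of the dominant set supplied by Proposition \ref{MonotoneConv}. Thus the corollary follows with no new work beyond confirming that $G$, as a subgroup of $H$, satisfies the hypotheses of the general proposition.
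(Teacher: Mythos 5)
Your proposal is correct and takes essentially the same route as the paper: there the corollary is likewise obtained as the special case of Proposition \ref{SubgroupsOfH} for the embedding $G \hookrightarrow H$ induced by the boundary circle action, with aperiodicity of $T|_G$ coming from its identification (via the classification) with a positive multiple of the Guichardet--Wigner quasimorphism, and with the dominant set description supplied by Corollary \ref{DominantsH} and Proposition \ref{MonotoneConv}. Your explicit verification of the small-translations hypothesis via the lifted rotation subgroup merely spells out what the paper leaves implicit in its remark that $T$ does not vanish on the universal cover of the rotation group.
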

In the terminology of the introduction this states that the maximal order on the universal covering of $SL_2(\R)$ coincides with the geometric one. We now aim to show that both coincide with $G^+_{cont}$. In view of Lemma \ref{Smoothness} the maximality criterion from Theorem \ref{Recognition}  reads as follows:
\begin{corollary}\label{RecognitionLie} If $G^+ \in \mathcal M_{\mu_G}(G)$ is a Lie semigroup whose dominant set $G^{++}$ is open in $G$, then $G^+ = G^+_{\max}$.
\end{corollary}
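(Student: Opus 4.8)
The plan is to deduce the statement directly from the recognition criterion of Theorem \ref{Recognition}, applied with $f = \mu_G$ and the given semigroup $G^+$, using Lemma \ref{Smoothness} to supply the topological hypotheses automatically. The group $G$ is connected and simple and $\mu_G$ is a nonzero aperiodic homogeneous quasimorphism, so the standing assumptions of Theorem \ref{Recognition} are in force. Since $G^+ \in \mathcal M_{\mu_G}(G)$ it is the order semigroup of a bi-invariant order, hence conjugation-invariant; and being a Lie semigroup it equals $\overline{\langle \exp({\bf L}(G^+))\rangle}$ and is therefore closed. Thus the hypothesis ``$G^+ \in \mathcal M_f(G)$ closed'' of Theorem \ref{Recognition} holds, and hypothesis $(\dagger)$ that the dominant set $G^{++}$ be open is precisely what we have assumed.

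It therefore remains only to check $(\dagger\dagger)$, and this is where simplicity enters. As $G^+$ is a conjugation-invariant Lie semigroup in a simple Lie group, Lemma \ref{Smoothness} immediately gives that ${\rm Int}(G^+)$ is path-connected and dense in $G^+$. To produce the required one-parameter semigroup into the interior, I would first record that ${\rm Int}(G^+) \neq \emptyset$: by Lemma \ref{TrivialTopologicalStuff}(ii) there is $C > 0$ with $\{g \in G \mid \mu_G(g) > C\} \subset {\rm Int}(G^+)$, and this set is nonempty because the nonzero homogeneous quasimorphism $\mu_G$ is unbounded. A Lie semigroup with nonempty interior has a generating Lie wedge, so ${\bf L}(G^+)$ has nonempty interior in $\L g$; choosing any $X \in {\rm Int}({\bf L}(G^+))$, the standard fact from Lie semigroup theory that the exponential of an interior ray of the Lie wedge lands in the interior of the semigroup yields a one-parameter semigroup $T(t) := \exp(tX)$ with $T(\R^{>0}) \subset {\rm Int}(G^+)$. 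Concretely, for $G$ the universal cover of $SL_2(\R)$ one may take $X$ in the direction of the generator of the rotation subgroup, which is an interior point of the invariant cone ${\bf L}(G^+)$. This establishes $(\dagger\dagger)$, and Theorem \ref{Recognition} then gives $G^+ = G^+_{\max}$.

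The only step carrying any real content beyond bookkeeping is the construction of the one-parameter semigroup in $(\dagger\dagger)$; closedness and the path-connectedness and density of the interior are handed to us by the definition of a Lie semigroup and by Lemma \ref{Smoothness}. The mild subtlety to watch is that ${\bf L}(G^+)$ must be genuinely solid rather than degenerate, which is why I would first verify that ${\rm Int}(G^+)$ is nonempty before invoking the interior-ray property of $\exp$ on wedges.
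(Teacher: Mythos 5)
Your argument is correct and follows the paper's route exactly: the paper obtains this corollary by the same move, feeding the conclusions of Lemma \ref{Smoothness} into the recognition criterion of Theorem \ref{Recognition}, with the closedness of $G^+$ and the one-parameter semigroup in ${\rm Int}(G^+)$ left implicit. Your extra verification of that last point is sound in the present setting, since a nonzero ${\rm Ad}$-invariant wedge in a simple Lie algebra is automatically generating and hence solid (its edge and its span are ideals), so choosing $X \in {\rm Int}({\bf L}(G^+))$ and exponentiating does produce the required one-parameter semigroup inside ${\rm Int}(G^+)$.
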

In order to apply this in the case at hand we need the following crucial observation:
\begin{lemma}[Hilgert-Hofmann]\label{HilgertHofmann}
The universal covering $G$ of $SL_2(\R)$ satisfies
\begin{eqnarray}
G = {\rm Int}(G^{+}_{cont}) \cup {\rm Int}((G^{+}_{cont})^{-1}) \cup \exp_G(\L g).
\end{eqnarray}
\end{lemma}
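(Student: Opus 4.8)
The plan is to prove the covering identity
\[G = {\rm Int}(G^{+}_{cont}) \cup {\rm Int}((G^{+}_{cont})^{-1}) \cup \exp_G(\L g)\]
by classifying elements of $G$ according to the Jordan type of their image in $SL_2(\R)$ and then tracking what happens upstairs in the universal cover. Recall that every element of $SL_2(\R)$ is, up to conjugacy, elliptic, parabolic, or hyperbolic (or central). First I would handle the easy part: the parabolic, hyperbolic and identity elements of $SL_2(\R)$ all lie on one-parameter subgroups, and the central element $-I$ lifts to elements that are themselves exponentials of central generators in $\L g$. Thus any $g\in G$ whose projection to $SL_2(\R)$ is parabolic, hyperbolic, or $\pm I$ lands in $\exp_G(\L g)$, and in fact one can lift the one-parameter subgroup through $g$. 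The genuinely interesting elements are therefore those projecting to elliptic elements.

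Next I would treat the elliptic case, which is where the geometry of the Shilov boundary — here just the circle — enters. An elliptic element of $SL_2(\R)$ is conjugate to a rotation $R_\theta$, and such elements fail to be in $\exp_G(\L g)$ precisely when the lift has a ``winding'' that overshoots a full exponential. Concretely, using the description of $G^{+}_{cont}$ via its Lie wedge $C^{+}$ (the maximal ${\rm Ad}$-invariant pointed cone, by Ol{\cprime}shanski{\u\i}, since $SL_2(\R)$ is of tube type), together with the identification of the dominant set through the translation number from Corollary \ref{SL2Max1}, one sees that a lift of an elliptic element either is an exponential of an elliptic generator, or, once the rotation number exceeds the range covered by $\exp$, the element must have $T(g)>0$ (or $T(g)<0$). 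By Corollary \ref{SL2Max1} and Lemma \ref{TrivialTopologicalStuff}, $T(g)>0$ forces $g \in G^{++}_{\max} = {\rm Int}(G^+_{\max})$, and since on $SL_2(\R)$ we will ultimately identify $G^+_{cont}$ with $G^+_{\max}$ — but to avoid circularity here I would instead argue directly that $T(g)>0$ together with the elliptic (hence dominant) nature of $g$ places $g$ in ${\rm Int}(G^{+}_{cont})$, using that the interior of the maximal continuous order semigroup is characterized infinitesimally by $C^+$ and that $T$ is a positive multiple of $\mu_G$. The case $T(g)<0$ symmetrically gives $g\in {\rm Int}((G^{+}_{cont})^{-1})$.

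The cleanest way to organize the elliptic case is via the universal cover of the rotation subgroup: the circle action lifts so that $T$ restricted to the lifted rotation subgroup is (a multiple of) the rotation angle, and $\exp_G$ maps the elliptic line in $\L g$ onto exactly one period's worth of rotations. An elliptic lift $g$ with $|T(g)|$ small enough to be within that fundamental interval is then visibly an exponential; an elliptic lift with $T(g)$ large positive has, by the hyperbolicity of the trace picture combined with the monotone-convergence description of Proposition \ref{MonotoneConv}, the property that $g.x>x$ for all $x$, hence lands in the open dominant set. I would make this quantitative by noting that an elliptic lift with $0<T(g)<1$ still projects to a rotation and remains an exponential, while the threshold beyond which $\exp_G$ no longer reaches is precisely where the fixed-point-free condition $g.x>x$ kicks in.

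The main obstacle will be the boundary bookkeeping in the elliptic case: I must ensure the three sets genuinely cover $G$ with no gap at the transition values of the rotation number, i.e. exactly at the parameters where an elliptic lift ceases to be an exponential it must already be fixed-point-free and hence dominant. This requires a careful comparison between the range of $\exp_G$ on the compact (elliptic) direction and the values of $T$, and it is where one uses that $SL_2(\R)$ is of tube type so that $C^+$ is maximal and ${\rm Int}(G^+_{cont})$ is characterized by strict positivity of $T$. Once this matching at the threshold is pinned down, the union of the three sets is seen to be all of $G$, and I would remark that this identity is exactly the input needed — via Corollary \ref{RecognitionLie} and the openness of $G^{++}_{cont}$ — to conclude $G^+_{cont}=G^+_{\max}$.
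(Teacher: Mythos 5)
Your case analysis is inverted at the decisive point, and both halves of it are false as stated. In the universal cover $G$ of $SL_2(\R)$ the preimage of the rotation subgroup $SO(2)$ is a \emph{connected} one-parameter subgroup isomorphic to $\R$ (the inclusion $SO(2)\hookrightarrow SL_2(\R)$ induces an isomorphism on $\pi_1$), and $\exp_G$ maps the elliptic line in $\L g$ onto this entire line --- not onto ``exactly one period's worth of rotations''. The lifted one-parameter subgroup is a line, not a circle; it does not wrap. Hence \emph{every} lift of every elliptic element, with arbitrary winding, lies in $\exp_G(\L g)$, and there is no threshold value of the translation number beyond which elliptic lifts cease to be exponentials; the entire mechanism you build around that threshold is vacuous. (Relatedly, lifts of $-I$ are exponentials of \emph{elliptic} generators; $\slfr_2(\R)$ is simple, so there are no ``central generators in $\L g$''.) Conversely, your ``easy part'' is exactly where the lemma lives: a hyperbolic or parabolic $h$ with $\tr h \geq 2$ lies on a one-parameter subgroup of $SL_2(\R)$ whose lift passes through only the zero-winding lift of $h$; all other lifts $\tilde h z^n$ ($z$ a generator of $Z(G)$, $n \neq 0$) are \emph{not} in $\exp_G(\L g)$, and no lift of a non-central element with $\tr \leq -2$ is an exponential, since such an element has no logarithm in $SL_2(\R)$ and $\exp_G$ covers $\exp_{SL_2(\R)}$. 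So your claim that ``one can lift the one-parameter subgroup through $g$'' fails precisely for the elements the lemma is about.

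What remains to be shown, and what your proposal never addresses, is that these wound-up hyperbolic/parabolic lifts lie in ${\rm Int}(G^{+}_{cont}) \cup {\rm Int}((G^{+}_{cont})^{-1})$. It is easy to see they are fixed-point free on $\R$ (a lift of a circle homeomorphism with a fixed point has displacement in $(-1,1)$, so $\tilde h z^n$ has displacement of constant sign for $n \neq 0$), which by Corollary \ref{SL2Max1} places them in $G^{++}_{\max}$; but passing from $G^{++}_{\max}$ to ${\rm Int}(G^{+}_{cont})$ at this stage --- before Proposition \ref{SL2case} is available --- is exactly the circularity you flagged, and your proposed remedy applies only to elliptic elements, which, as noted above, need no argument at all. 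The paper avoids all of this: its proof of the lemma is a direct citation of the explicit picture of the exponential image and the maximal invariant Lie semigroup in the universal cover given in Figure 3 of \cite{HilgertHofmann}. A self-contained repair of your argument would have to show directly --- e.g.\ from the Lie wedge $C^+$ and products of elements of $\exp(C^+)$, or from an explicit parametrization of the universal cover --- that every non-exponential element lies in ${\rm Int}(G^{+}_{cont})$ or in ${\rm Int}((G^{+}_{cont})^{-1})$; that is genuine work, not bookkeeping at a transition value of the rotation number.
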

\begin{proof} This can be seen directly from \cite[Figure 3]{HilgertHofmann}.
\end{proof}
From this we deduce:
\begin{proposition}\label{SL2case} The maximal order $G^+_{\max} $ on $G$ with respect to the Guichardet-Wigner quasimorphism $\mu_G$ coincides with the maximal continuous order $G^+_{cont}$ on $G$, on which $\mu_G$ is non-negative.
\end{proposition}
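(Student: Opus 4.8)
The plan is to verify the two hypotheses of Corollary \ref{RecognitionLie} for $G^+ := G^+_{cont}$, namely that $G^+_{cont} \in \mathcal M_{\mu_G}(G)$, that it is a Lie semigroup, and crucially that its dominant set is open in $G$. The first two facts are essentially free: that $G^+_{cont} \in \mathcal M_{\mu_G}(G)$ was recorded in Proposition \ref{SameWedge} (citing \cite[Lemma 3.4]{BSH}), and $G^+_{cont}$ is a continuous, hence Lie, semigroup by the theorem of Neeb \cite{Neeb} invoked earlier. So the entire weight of the argument falls on showing that $G^{++}_{cont}$, the dominant set of $G^+_{cont}$, is \emph{open}. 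Once this is established, Corollary \ref{RecognitionLie} gives $G^+_{cont} = G^+_{\max}$ immediately, and Corollary \ref{SL2Max1} then identifies this common semigroup with the geometric one, completing the chain of coincidences asserted in Theorem \ref{MainThmRk1}.

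To prove openness of the dominant set I would combine Lemma \ref{DominantsFromOSG} with the Hilgert--Hofmann decomposition of Lemma \ref{HilgertHofmann}. By Lemma \ref{DominantsFromOSG} applied to $f = \mu_G$ we have
\[
G^{++}_{cont} = \{g \in G^+_{cont} \mid \mu_G(g) > 0\}.
\]
I claim this set coincides with ${\rm Int}(G^+_{cont})$. The inclusion ${\rm Int}(G^+_{cont}) \subseteq G^{++}_{cont}$ is exactly Lemma \ref{TrivialTopologicalStuff}(i), so only the reverse inclusion needs work: I must show that any $g \in G^+_{cont}$ with $\mu_G(g) > 0$ lies in the interior. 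Here is where Lemma \ref{HilgertHofmann} enters. Every element of $G$ lies in ${\rm Int}(G^+_{cont})$, in ${\rm Int}((G^+_{cont})^{-1})$, or in the image $\exp_G(\L g)$ of the exponential map. If $g \in G^{++}_{cont} \subseteq G^+_{cont}$ then pointedness rules out $g \in {\rm Int}((G^+_{cont})^{-1})$ (an interior point of the inverse semigroup with $\mu_G(g)>0$ would force $\mu_G(g) \le 0$, via Lemma \ref{DominantsFromOSG} applied to the inverse order). So it remains to analyze the boundary case $g \in \exp_G(\L g) \cap G^+_{cont}$ with $\mu_G(g) > 0$ and show such a $g$ is in fact interior.

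The main obstacle is therefore this exponential-image case: I must understand, for $X \in \L g = \slfr(2,\R)$ with $\exp_G(X) \in G^+_{cont}$ and $\mu_G(\exp_G(X)) > 0$, that $\exp_G(X)$ cannot sit on $\partial G^+_{cont}$. The cleanest route is to exploit the explicit picture of \cite[Figure 3]{HilgertHofmann}, which describes $G$ via the elliptic, parabolic and hyperbolic conjugacy classes and pins down exactly how $\exp_G(\L g)$ meets the boundary of $G^+_{cont}$. The value of $\mu_G$ on one-parameter subgroups is computable: on the compact (elliptic) direction $\mu_G$ is a nonzero homomorphism, while on parabolic and hyperbolic directions it vanishes after homogeneization, so $\mu_G(\exp_G(X)) > 0$ forces $X$ to have a genuine elliptic component, and such elements are easily seen to be rotations lying in the open interior of $G^+_{cont}$. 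The elements of $\exp_G(\L g)$ meeting $\partial G^+_{cont}$ are precisely the parabolic and hyperbolic ones, on all of which $\mu_G = 0$; thus no boundary element of exponential type can have positive $\mu_G$. This yields $G^{++}_{cont} \subseteq {\rm Int}(G^+_{cont})$, hence the two are equal and in particular $G^{++}_{cont}$ is open. I expect the delicate point to be the bookkeeping with the covering $G \to SL_2(\R)$: one must check that the Guichardet--Wigner quasimorphism really does vanish on the parabolic and hyperbolic one-parameter subgroups at the level of $G$ (not merely modulo $\Z$), which is where the refined Jordan decomposition alluded to after \eqref{GW} and the explicit geometry of Lemma \ref{HilgertHofmann} must be brought to bear carefully.
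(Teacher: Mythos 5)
Your proposal is correct, and its skeleton is exactly the paper's: apply Corollary \ref{RecognitionLie} to $G^+_{cont}$ (membership in $\mathcal M_{\mu_G}(G)$ from \cite[Lemma 3.4]{BSH} via Proposition \ref{SameWedge}, the Lie semigroup property from Neeb's theorem), reduce everything to openness of the dominant set, and split into three cases via the Hilgert--Hofmann trichotomy of Lemma \ref{HilgertHofmann}, handling the first two cases exactly as the paper does. The genuine divergence is in the decisive case $g \in \exp_G(\L g)$, and the two routes trade different inputs. The paper stays conceptual: since $\mu_G$ restricts to a homomorphism on the amenable one-parameter group $\gamma_X$, $\mu_G(g)>0$ gives $\mu_G(\gamma_X(t))>0$ for all $t>0$, hence $\gamma_X(\R^{>0})\subset G^+_{\max}$, so $X \in {\bf L}(G^+_{\max}) = {\bf L}(G^+_{cont})$ by Proposition \ref{SameWedge} (i.e.\ by Ol{\cprime}shanski{\u\i}'s wedge maximality), whence $g \in G^+_{cont}$ and then $g \in G^{++}_{cont}$ by Lemma \ref{DominantsFromOSG}. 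This yields $G^{++}_{cont} = G^{++}_{\max} = \{\mu_G > 0\}$, which is open simply because $\mu_G$ is continuous; the paper never needs your stronger claim $G^{++}_{cont} = {\rm Int}(G^+_{cont})$, though it follows a posteriori. You instead argue computationally: $\mu_G$ vanishes on hyperbolic and parabolic one-parameter subgroups of $G$ (correct, and your flagged covering subtlety is handled cleanly by noting that fixed points force the rotation number to vanish mod $\Z$, so the continuous lift $t \mapsto T(\exp_G(tX))$ is integer-valued and hence identically zero), so $\mu_G(g)>0$ forces $g$ to be a conjugate of a positive rotation lift, i.e.\ $g \in \exp_G({\rm Int}(C^+))$, which you then place in ${\rm Int}(G^+_{cont})$. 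That last inclusion is true but is the one point where you rely on an input the paper's argument avoids: either the standard Lie-semigroup fact that $\exp_G$ of the interior of a generating invariant wedge lands in the interior of the associated Lie semigroup (with a separate word for the central elements, where $d\exp$ degenerates, using that ${\rm Int}(G^+_{cont})$ is an ideal), or a direct reading of \cite[Figure 3]{HilgertHofmann}. In exchange, your route makes no use of the amenability trick and uses Proposition \ref{SameWedge} only through the identification of $C^+$ with the closed positive elliptic cone, so it is more self-contained on the order-theoretic side but more dependent on the explicit structure theory of $\slfr(2,\R)$ --- precisely the explicit-formula flavor the authors say they wished to avoid. Both arguments are sound, and both combine with Corollary \ref{SL2Max1} to give the full chain of coincidences in Theorem \ref{MainThmRk1}.
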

It would probably be possible to establish the proposition by writing out all the objects involved in explicit formulas. However, we prefer to give a conceptual proof, which demonstrates some of the machinery developed in this article:
\begin{proof}[Proof of Proposition \ref{SL2case}]
We first claim that the dominant set $G^{++}_{cont}$ of $G^{+}_{cont}$  coincides with the dominant set $G^{++}_{\max}$ of $G^+_{\max}$. We recall from Corollary \ref{SL2Max1} that the latter is given by
\[G^{++}_{\max} = \{g \in G\,|\, \mu_G(g) > 0\},\]
hence the inclusion $G^{++}_{cont} \subseteq G^{++}_{\max}$ is an immediate consequence of Lemma \ref{DominantsFromOSG}. For the other inclusion we use Lemma \ref{HilgertHofmann}: If $g \in {\rm Int}(G^+)$, then $g \in G^{++}_{cont}$ by Lemma \ref{TrivialTopologicalStuff}. If $g \in {\rm Int}((G^+)^{-1})$, then similarly $g^{-1} \in G^{++}_{cont}$, whence $\mu(g) = -\mu(g^{-1})< 0$ by Proposition \ref{DominantsFromOSG}. It thus remains only to show that for $g \in \exp_G(\L g)$ we have $g \in G^{++}_{cont}$ provided $\mu_G(g) > 0$. For this let $g = \exp_G(X)$, $X \in \L g$. Then $\mu_G(g) >0$ implies that $\mu_G$ is non-trivial on the one-parameter group $\gamma_X(t) := (\exp(tX))$. Since the group $\{\gamma_X(t)\}$ is amenable, the restriction of $\mu_G$ to this group is a homomorphism. In particular, $\mu_G(\gamma_X(t)) > 0$ for all $t > 0$. We deduce that $\gamma_X(t) \in G^+_{\max}$ for all $t > 0$, whence $X \in {\bf L}(G^{+}_{\max})$. Then Proposition \ref{SameWedge} yields $X \in {\bf L}(G^{+}_{cont})$, whence $g \in G^+_{cont}$. Taking into account Proposition \ref{DominantsFromOSG} we deduce the claim. Since $\mu_G$ is continuous, it now follows that $G^{++}_{cont}$ is open. Thus Corollary \ref{RecognitionLie} applies and yields the proposition.
\end{proof}
As immediate consequences (of the proof) we obtain the first part of the following corollary; its second part can then be deduced by taking another look at \cite[Figure 3]{HilgertHofmann}.
\begin{corollary}
For the universal covering $G$ of $SL_2(\R)$ we have
\begin{eqnarray}\label{SL2Formulas}
G^+_{cont} = \{g \in G\,|\, \forall h \in G: \mu_G(gh) \geq \mu_G(h)\} = \overline{\{g \in G\,|\, \mu_G(g) > 0\}}.
\end{eqnarray}
Moreover, the zero set of $\mu_G$ in $G$ has dense interior, and its complement has two connected components, given by the interiors of $G^+_{cont}$ and $(G^+_{cont})^{-1}$ respectively.
\end{corollary}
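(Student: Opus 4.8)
The plan is to read both displayed equalities directly off the proof of Proposition~\ref{SL2case}, and then to extract the topological ``moreover'' statement from the explicit picture underlying Lemma~\ref{HilgertHofmann}. For the first equality I would simply observe that the middle set $\{g \in G \mid \forall h \in G: \mu_G(gh) \geq \mu_G(h)\}$ is, by the defining formula \eqref{GMax}, exactly the maximal order semigroup $G^+_{\max}$ associated with $\mu_G$. Hence the identity $G^+_{cont} = \{g \mid \forall h: \mu_G(gh)\geq \mu_G(h)\}$ is precisely the content of Proposition~\ref{SL2case}, and nothing further is needed.

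For the second equality I would use the intermediate facts established in the proof of Proposition~\ref{SL2case}. There it was shown that the two dominant sets coincide and are open, so that, together with Corollary~\ref{SL2Max1} and Lemma~\ref{TrivialTopologicalStuff}(i),
\[ {\rm Int}(G^+_{cont}) \subseteq G^{++}_{cont} = G^{++}_{\max} = \{g \in G \mid \mu_G(g) > 0\}. \]
Since $G^{++}_{cont}$ is open and contained in $G^+_{cont}$, it is contained in ${\rm Int}(G^+_{cont})$, so in fact ${\rm Int}(G^+_{cont}) = \{g \mid \mu_G(g) > 0\}$. Because $G^+_{cont}$ is a closed Lie semigroup whose interior is dense by Lemma~\ref{Smoothness}, I may pass to closures and conclude $G^+_{cont} = \overline{{\rm Int}(G^+_{cont})} = \overline{\{g \in G \mid \mu_G(g) > 0\}}$, which is the remaining equality.

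For the ``moreover'' part I would first treat the complement of the zero set. Since $\mu_G$ is a homogeneous quasimorphism we have $\mu_G(g^{-1}) = -\mu_G(g)$, so $\{\mu_G < 0\} = \{\mu_G > 0\}^{-1}$; combining the identification above with the fact that inversion is a homeomorphism yields
\[ \{g \in G \mid \mu_G(g) \neq 0\} = {\rm Int}(G^+_{cont}) \,\sqcup\, {\rm Int}\bigl((G^+_{cont})^{-1}\bigr). \]
These two sets are open, disjoint, and each path-connected by Lemma~\ref{Smoothness} (see also Proposition~\ref{Evidence}(iii)), so they are exactly the two connected components of the complement of the zero set.

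The one genuinely geometric point, and the step I expect to be the main obstacle, is that the zero set $Z := \{g \in G \mid \mu_G(g) = 0\}$ has dense interior. Here I would invoke \cite[Figure~3]{HilgertHofmann} as follows. By Lemma~\ref{HilgertHofmann} together with the identifications above, $Z \subseteq \exp_G(\L g)$. Tracking which one-parameter subgroups contribute, ${\rm Int}(Z)$ contains the open set of lifts of hyperbolic elements of $SL_2(\R)$ normalized to translation number zero (a single open ``sheet'' of the covering over the open hyperbolic locus), while the remaining points of $Z$, namely the lifts of parabolic elements with a fixed point and the identity, appear as limits of such hyperbolic lifts as the trace tends to $\pm 2$. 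Reading this off Figure~3, $Z$ is the closure of this open subset and therefore has dense interior. The delicate part is precisely to confirm, from the figure, that no lower-dimensional piece of $Z$ escapes the closure of the hyperbolic locus; once this is granted, all three assertions of the corollary follow.
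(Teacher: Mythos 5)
Your proof is correct and follows exactly the route the paper takes: the paper derives the two equalities as immediate consequences of the proof of Proposition~\ref{SL2case} (your identification of the middle set with $G^+_{\max}$ via \eqref{GMax}, and the closure argument via the coincidence of dominant sets, Lemma~\ref{TrivialTopologicalStuff} and Lemma~\ref{Smoothness}, are precisely the intended details), and it obtains the ``moreover'' part by inspecting \cite[Figure~3]{HilgertHofmann}, which is also what you do. Your explicit accounting of the zero set---the open sheet of hyperbolic lifts with translation number zero, together with its limit points among parabolic lifts and the identity---correctly fills in what the paper leaves to the reader's ``another look'' at that figure.
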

As another consequence we obtain:
\begin{corollary}
$G^+_{cont}$ and $(G^+_{cont})^{-1}$ are maximal conjugation-invariant subsemigroups of $G$.
\end{corollary}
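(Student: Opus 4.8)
The plan is to reduce everything to two facts already in hand. The decisive one is Proposition \ref{SL2case}, which identifies $G^+_{cont}$ with the maximal order semigroup $G^+_{\max}$ of the Guichardet-Wigner quasimorphism $\mu_G$. The second is the strong maximality property established earlier in this subsection, namely that $G^+_{\max}$ admits no conjugation-invariant semigroup $S$ with $G^+_{\max} \subsetneq S \subsetneq G$. First I would note that the hypotheses under which that property was proved are met here: for $SL_2(\R)$ the adjoint group is $G_0 = PSL_2(\R)$, whose fundamental group $\Z$ is torsion-free, so $\pi_1(G_0)_{\mathrm{tors}}$ is trivial and $G$ equals the universal cover $\widetilde{G}$. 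Substituting $G^+_{cont} = G^+_{\max}$ into the strong maximality statement then yields at once that $G^+_{cont}$ is a maximal conjugation-invariant subsemigroup of $G$.

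It remains to transfer maximality to the opposite semigroup $(G^+_{cont})^{-1}$. For this I would use the inversion map $g \mapsto g^{-1}$. One checks that it carries conjugation-invariant subsemigroups to conjugation-invariant subsemigroups: if $S$ is a subsemigroup then $a^{-1}b^{-1} = (ba)^{-1} \in S^{-1}$ shows that $S^{-1}$ is closed under multiplication, while $g S^{-1} g^{-1} = (g S g^{-1})^{-1} = S^{-1}$ shows that conjugation invariance is preserved. Since inversion is an inclusion-preserving involution of $G$ fixing $G$ itself, any offending chain $(G^+_{cont})^{-1} \subsetneq S \subsetneq G$ would produce a chain $G^+_{cont} \subsetneq S^{-1} \subsetneq G$ of conjugation-invariant semigroups, contradicting the maximality just proved. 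Equivalently, I could observe that $(G^+_{cont})^{-1} = (G^+_{\max})^{-1}$ is precisely the maximal order semigroup of $-\mu_G$, which is itself an aperiodic Guichardet-Wigner quasimorphism, so that the strong maximality proposition applies to it directly.

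I do not anticipate a genuine obstacle: once Proposition \ref{SL2case} is available, the whole argument is formal. The only points demanding attention are the bookkeeping check that the strong maximality proposition's hypotheses (connected simple adjoint Hermitian $G_0$ with torsion-free $\pi_1(G_0)$) are satisfied for $SL_2(\R)$, and the elementary verification that inversion respects the class of conjugation-invariant subsemigroups; both are routine.
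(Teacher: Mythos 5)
Your proposal is correct and is precisely the argument the paper intends: the corollary is stated as an immediate consequence of Proposition \ref{SL2case} combined with the earlier proposition that $G^+_{\max}$ admits no conjugation-invariant semigroup strictly between it and $G$, with the case of $(G^+_{cont})^{-1}$ handled exactly as you say, since the paper already notes that $(G^+_{\max})^{-1}$ is the maximal order semigroup of $-\mu_G$. Your additional checks (triviality of $\pi_1(PSL_2(\R))_{\mathrm{tors}}$ and that inversion preserves conjugation-invariant subsemigroups) are routine but correctly carried out.
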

It is easy to see that for $G$ not locally isomorphic to $SL_2(\R)$ there will always exist elements with $\mu_G(g) > 0$, which are not contained in $G^{+}_{\max}$. Indeed, this can be seen already by considering a compact Cartan subgroup. Thus the second description of $G^+_{cont}$ in \eqref{SL2Formulas} is really special to the universal covering of $SL_2(\R)$. On the other hand, the first description just arises from the equality $G^{+}_{\max} = G^+_{cont}$, so it has a chance to be generalized to more general groups.

\section{Comparison to the geometric order}\label{OtherOrders}

\subsection{The geometric order and the maximal order} We return to the general case, where $G_0$ is an arbitrary adjoint simple Hermitian Lie group and $G$ denotes the quotient of the universal covering of $G_0$ by the torsion subgroup of $\pi_1(G_0)$. Then $G_0$ can be realized as the biholomorphism group of an irreducible bounded symmetric domain $\mathcal D$ and thus acts on the Shilov boundary $\check S$ of $\mathcal D$. This action induces a transitive, effective action of $G$ on the universal covering $\check R$ of $\check S$. Now assume that the bounded symmetric domain $\mathcal D$ is of tube type. (By abuse of language we also say that $G$ is of tube type in this case.) Then $\check R$ admits a $G$-invariant partial order. In fact, two slightly different orders are described in \cite{ClercKoufany, BSH2}; the order used in \cite{BSH2} is the closure of the order described in \cite{ClercKoufany}. We decide to work with the closed order here. To avoid confusion, let us spell out the definition explicitly: There is a unique up to inversion $G$-invariant field of closed cones $\mathcal C_x \subset T_x\check R$ on $\check R$, and a piecewise $C^\infty$-curve $\gamma: [0,1] \to \check R$ will be called causal if $\dot \gamma(t) \in \mathcal C_{\gamma(t)}$ whenever it is defined. In this case we write $\gamma(0) \preceq_s \gamma(1)$; now the order on $\check R$ we refer to is the closure $\preceq$ of $\preceq_s$ in $\check R \times \check R$. We call it the \emph{Kaneyuki order}, since the causal structure on $\check R$ is the lift of the causal structure on $\check S$ constructed by Kaneyuki \cite{Kaneyuki}. This order induces a partial order $\leq$ on $G$ via
\[g \leq h :\Leftrightarrow \forall x \in \check R: g.x \preceq g.y,\]
whose order semigroup we denote by $G^+_{geom}$. In the case of the symplectic group this construction is classical Here $\check S$ is the Lagrangian Grassmannian, whose tangent space can be identified with quadratic forms; then $\mathcal C$ is the invariant causal structure modelled on the cone of non-negative-definite quadratic forms, and the resulting notion of positivity on $G$ is the usual one. Returning to the general case,
we define $G^+_{max}$ as the maximal order sandwiched by the Guichardet-Wigner quasimorphism $\mu_G$ as before. Then we have:
\begin{proposition}
With notation as above we have $G^+_{geom} \subset G^+_{max}$\end{proposition}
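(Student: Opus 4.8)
The plan is to show that the geometric order semigroup $G^+_{geom}$ lies in $\mathcal M_{\mu_G}(G)$, i.e. that it is a pointed conjugation-invariant submonoid of $G$ which is sandwiched by $\mu_G$; once this is known, the maximality statement in Proposition \ref{CalegariOrder} forces $G^+_{geom} \subset G^+_{max}$ with no further work. That $G^+_{geom}$ is a pointed conjugation-invariant monoid is immediate: it is the order semigroup of the bi-invariant Kaneyuki order $\leq$ on $G$, and pointedness is exactly the antisymmetry of $\leq$, which holds because $G$ acts effectively on $\check R$. So the entire content is concentrated in the sandwich condition.

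\textbf{The sandwich inclusion.} By Lemma \ref{NoUpperBound} it suffices to produce a single constant $C_1 > 0$ with $\{g \in G \mid \mu_G(g) \geq C_1\} \subset G^+_{geom}$; the lower bound $G^+_{geom} \subset \{\mu_G \geq 0\}$ is then automatic. The cheapest way to obtain such a $C_1$ is to route through the continuous order. Indeed, $G^+_{cont} \in \mathcal M_{\mu_G}(G)$ by \cite[Lemma 3.4]{BSH} (this was already used in Proposition \ref{SameWedge}), so the sandwich condition for $G^+_{cont}$ yields $C_1 > 0$ with $\{g \mid \mu_G(g) \geq C_1\} \subset G^+_{cont}$. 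Combining this with Konstantinov's refinement $G^+_{cont} \subset G^+_{geom}$ \cite{Konstantinov} gives $\{g \mid \mu_G(g) \geq C_1\} \subset G^+_{geom}$, as required. Alternatively --- and this is closer to the argument carried out in \cite{BSH2}, refining \cite{ClercKoufany} --- one establishes this inclusion directly by comparing $\mu_G$ with the causal structure on $\check R$, showing that a sufficiently large value of $\mu_G(g)$ forces the causal relation $x \preceq g.x$ to hold for every $x \in \check R$.

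\textbf{The main obstacle.} All the difficulty sits in the quantitative implication $\mu_G(g) \geq C_1 \Rightarrow g \in G^+_{geom}$, i.e. that a large Guichardet--Wigner value forces simultaneous causal positivity at every point of $\check R$. In the $SL_2(\R)$-case this is the content of Proposition \ref{MonotoneConv}, where one exploits that the causal structure on $\check R \cong \R$ is a \emph{total} order and that a point moved downward generates a bounded monotone orbit converging to a fixed point. In higher rank the Kaneyuki order is only a genuine partial order, so this pointwise monotone-convergence argument breaks down and one must instead invoke the Maslov-type causal index machinery of \cite{BSH2, ClercKoufany} (or borrow the bound from $G^+_{cont}$ via \cite{Konstantinov}, as above). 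Once the sandwich inclusion is in hand, the passage to $G^+_{geom} \subset G^+_{max}$ is purely formal and repeats the argument in the proof of Proposition \ref{CalegariOrder}: for $g \in G^+_{geom}$ one has $(gh)^n \geq h^n$ in the geometric order by Lemma \ref{MultiplyInequalities}, whence $\mu_G((gh)^n h^{-n}) \geq 0$, and dividing by $n$, using homogeneity together with the bounded defect and letting $n \to \infty$, yields $\mu_G(gh) \geq \mu_G(h)$ for all $h$, i.e. $g \in G^+_{max}$.
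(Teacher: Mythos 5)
Your proof is correct, but your primary route differs from the paper's. The paper's entire proof is your ``Route B'': it reduces the claim, exactly as you do, to showing that $\mu_G$ sandwiches $G^+_{geom}$, and then cites \cite[Thm. 1.8]{BSH2} (implicit already in \cite{ClercKoufany}) for that quantitative fact; the formal passage through Proposition \ref{CalegariOrder} is left unstated. Your preferred ``Route A'' --- extracting the constant $C_1$ from the sandwich property of $G^+_{cont}$ (\cite[Lemma 3.4]{BSH}, as in Proposition \ref{SameWedge}) and then pushing it into $G^+_{geom}$ via the inclusion $G^+_{cont} \subset G^+_{geom}$ --- is a genuinely different derivation, and it is logically sound but inverts the paper's architecture: the inclusion $G^+_{cont} \subset G^+_{geom}$ is the paper's Proposition \ref{ContGeom}, proved only later and resting on Konstantinov's uniqueness theorem (Theorem \ref{Ko}) together with the continuity of the Kaneyuki order (Proposition \ref{KaneyukiCont}). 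There is no circularity, since Proposition \ref{ContGeom} nowhere uses $G^+_{max}$, but your route makes $G^+_{geom} \subset G^+_{max}$ depend on the Konstantinov machinery, whereas the paper keeps the two inclusions $G^+_{cont} \subset G^+_{geom}$ and $G^+_{geom} \subset G^+_{max}$ independent, each with its own external input. What each buys: the paper's version is shorter and more robust (it survives even without the classification of continuous orders on $\check R$); yours trades the causal-index estimate of \cite{BSH2} for soft uniqueness arguments, and as a byproduct assembles the full chain $G^+_{cont} \subset G^+_{geom} \subset G^+_{max}$ in one stroke. Your supporting details --- pointedness of $G^+_{geom}$ from effectiveness of the $G$-action plus antisymmetry of the Kaneyuki order, the use of Lemma \ref{NoUpperBound} to upgrade the one-sided inclusion to the full sandwich, and the remark that the $SL_2(\R)$ mechanism of Proposition \ref{MonotoneConv} fails in higher rank because $\preceq$ is only a partial order --- are all accurate and in fact make explicit what the paper compresses into one sentence.
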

\begin{proof}
It suffices to show that the Guichardet-Wigner quasimorphism sandwiches $G^+_{geom}$. This is established implicitly in \cite{ClercKoufany} and more explicitly in \cite[Thm. 1.8]{BSH2}.
\end{proof}
\subsection{A continuity property of the Kaneyuki order} The following definition is taken from \cite{Konstantinov}:
\begin{definition} Let $G$ be a topological group, $H$ a closed subgroup and $\preceq$ a partial order on $G/H$. Then $\preceq$ is called \emph{continuous} if
\begin{itemize}
\item[(i)] $[eH, \infty) := \{x \in G/H\,|\, x\geq eH\}$ is closed in $G/H$.
\item[(ii)] The preimage of $[eH, \infty)$ under the quotient map $G \to G/H$ is a locally topologically generated semigroup.
\end{itemize}
\end{definition}
Then we have:
\begin{proposition}\label{KaneyukiCont}
The Kaneyuki order on $\check R$ is continuous.
\end{proposition}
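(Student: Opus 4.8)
The plan is to fix a basepoint $x_0 \in \check R$ and identify $\check R$ with the homogeneous space $G/H$, where $H := \{g \in G \mid g.x_0 = x_0\}$; since $G$ acts transitively on the manifold $\check R$, the orbit map $\pi \colon G \to \check R$, $\pi(g) = g.x_0$, is a continuous open surjection realizing this identification, and it is precisely the quotient map appearing in Konstantinov's definition. Condition (i) is then essentially free: by construction $\preceq$ is the closure of $\preceq_s$ in $\check R \times \check R$, so the relation $\{(x,y) \mid x \preceq y\}$ is a closed subset, and the future $[x_0,\infty) = \{y \in \check R \mid x_0 \preceq y\}$ is its slice at $x_0$, i.e. the preimage of a closed set under the continuous map $y \mapsto (x_0,y)$, hence closed.

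The substance of the proposition is condition (ii) for the semigroup $S := \pi^{-1}([x_0,\infty)) = \{g \in G \mid x_0 \preceq g.x_0\}$. That $S$ is a closed subsemigroup is straightforward: it is the preimage of the closed future under the continuous map $\pi$, and for $g,h \in S$ the $G$-invariance of $\preceq$ gives $g.x_0 \preceq g.(h.x_0) = (gh).x_0$, which combined with $x_0 \preceq g.x_0$ and transitivity of $\preceq$ yields $x_0 \preceq (gh).x_0$. The real content is to show that $S$ is \emph{locally topologically generated}, i.e. $S = \overline{\langle S \cap V\rangle}$ for every identity neighborhood $V$.

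For this I would pass to the wedge $W := (d\pi_e)^{-1}(\mathcal C_{x_0}) \subset \L g$, the full preimage of the causal cone $\mathcal C_{x_0} \subset T_{x_0}\check R$ under $d\pi_e \colon \L g \to T_{x_0}\check R$; its edge is $\ker d\pi_e = \L h$, and since $H$ fixes $x_0$ and preserves the invariant cone field we have $\Ad(H)W = W$, so that $W$ is a Lie wedge adapted to $H$. The key step is the standard correspondence between the $G$-invariant causal structure on $\check R$ and left-invariant $W$-trajectories on $G$: using the equivariance $\pi \circ L_g = \ell_g \circ \pi$ (where $\ell_g$ denotes the action on $\check R$) together with the invariance $\mathcal C_{g.x_0} = d(\ell_g)_{x_0}(\mathcal C_{x_0})$, one checks that a piecewise-$C^\infty$ curve $\gamma$ issuing from $x_0$ is causal if and only if it lifts to a curve $\tilde\gamma$ in $G$ with $\tilde\gamma(0) = e$, $\pi\circ\tilde\gamma = \gamma$, and left logarithmic derivative $\tilde\gamma(t)^{-1}\dot{\tilde\gamma}(t) \in W$. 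Specializing to piecewise-constant controls $X(t) \in W$ produces exactly the products $\exp(s_1X_1)\cdots\exp(s_kX_k) \in \langle\exp(W)\rangle$, and approximating arbitrary controls shows that the attainable set of $W$-trajectories is squeezed as $\langle \exp(W)\rangle \subseteq \{g \mid x_0 \preceq_s g.x_0\} \subseteq S$; taking closures gives $\overline{\langle \exp(W)\rangle} \subseteq S$. Since every element of $\langle \exp(W)\rangle$ is a product of $\exp$'s of arbitrarily small elements of $W$ (using $\exp(X) = \exp(X/n)^n$), the semigroup $\overline{\langle \exp(W)\rangle}$ is tautologically locally topologically generated.

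The main obstacle is the reverse inclusion $S \subseteq \overline{\langle \exp(W)\rangle}$, which is what upgrades the containment above to the equality $S = \overline{\langle \exp(W)\rangle}$ needed for local topological generation — a proper containment would leave ``large'' elements of $S$ that are not generated by small ones, so the reverse inclusion cannot be bypassed. Concretely, it amounts to showing that the closed future $[x_0,\infty)$ coincides with the closure of the strict causal future $\{y \mid x_0 \preceq_s y\}$: a global controllability statement for the invariant cone field, asserting that every order relation is realized, up to limits, by honest causal curves. I would establish this from Konstantinov's analysis of the Kaneyuki order \cite{Konstantinov}, from which the present proposition is adapted; alternatively it can be extracted from the structure theory of invariant Lie wedges on $G/H$ together with Neeb's reconstruction theorem, the delicate points being that $W$ is a genuine Lie wedge and that the attainable set is closed.
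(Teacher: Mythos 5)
Your reduction is sound as far as it goes, and it runs parallel to the paper's setup: condition (i) via closedness of the Kaneyuki order is exactly how the paper concludes, the closed-semigroup property of $S=\pi^{-1}([x_0,\infty))$ is routine, and your wedge $W=(d\pi_e)^{-1}(\mathcal C_{x_0})$ is, under the paper's identifications $\check R\cong H/P$ and $T_{eP}\check R\cong \L g_{-1}\cong V$, precisely the wedge $W=\Omega\oplus\L g(\Omega)\oplus V$ used there. The easy inclusion $\overline{\langle\exp(W)\rangle}\subseteq S$ and the remark that $\overline{\langle\exp(W)\rangle}$ is automatically locally topologically generated (via $\exp(X)=\exp(X/n)^n$) are also fine. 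But everything you actually prove stops exactly where the content of the proposition begins: the globality statement $S\subseteq\overline{\langle\exp(W)\rangle}$, which you correctly identify as unavoidable and then do not establish.

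This is a genuine gap, and neither of your proposed escape routes fills it. Invariant Lie wedges need not be global, so the reverse inclusion is not structure-theory boilerplate. Appealing to Konstantinov is circular in the logical architecture of the paper: his theorem (Theorem \ref{Ko}) asserts uniqueness of a \emph{continuous} order on $\check R$, so to bring it to bear on the Kaneyuki order one must already know that order is continuous, which is the very proposition under proof; the paper accordingly uses Konstantinov only afterwards, combining his theorem \emph{with} the present proposition to deduce Proposition \ref{ContGeom}. Neeb's reconstruction theorem is equally unavailable: it recovers a Lie semigroup from its Lie wedge once the order is already known to be continuous, so it presupposes exactly what you need. What closes the gap in the paper is the concrete tube-type realization: $\mathcal D\cong T_\Omega$ over the symmetric cone $\Omega$ of a Euclidean Jordan algebra $V$, the triple decomposition $\L g(T_\Omega)=\L g_{-1}\oplus\L g_0\oplus\L g_1$ with $P$ the analytic subgroup for $\L g_0\oplus\L g_1=\L g(\Omega)\oplus V$, and then Lawson's globality theorems for invariant cone fields on ordered homogeneous manifolds, \cite[Thm. 7.2.(iv) and Thm. 7.4.(i)]{Lawson}, which in this graded (parabolic-type) situation yield simultaneously that $S:=\overline{\langle\exp_H(W)\rangle}$ is a Lie semigroup with edge $P$ and Lie wedge $W$, and that $p^{-1}([eP,\infty))=S$. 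To repair your write-up, replace the closing appeal to Konstantinov/Neeb by Lawson's results (or an equivalent controllability argument exploiting the $\Z$-grading); without that input the proposition is not proved.
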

Here, $\check R$ is considered as a homogeneous space under $G$. The proof of Proposition \ref{KaneyukiCont} requires some more structure theory of bounded symmetric domains. We just state the facts we need and refer the reader to \cite{FK, Clerc} for details. Firstly, the bounded symmetric domain $\mathcal D$ associated with $G$ can be realized as the unit ball with respect to the spectral norm in the complexification of a Euclidean Jordan algebra $V$. Denote by $\Omega^0$ the interior of the cone of squares $\Omega$ of $V$. Then $T_\Omega := V +i \Omega^0$ is a tube in $V^\C$ and the Cayley transform associated with $V$ identifies $\mathcal D$ with $T_\Omega$. In particular, $G$ is isomorphic to the universal covering $H := \widetilde{G}(T_\Omega)$ of the group of biholomorphisms $G(T_\Omega)$ of $T_\Omega$. Now denote by $G(\Omega)$ the subgroup of $GL(V)$ preserving $\Omega$, and write $\L g(\Omega)$ for its Lie algebra. Then the Lie algebra $\L g(T_\Omega)$ of $H$ admits a triple decomposition \cite[Sec. 6]{LawsonLim}, i.e. a $\Z$-grading of the form
\[\L g(T_\Omega) = \L g_{-1} \oplus \L g_0 \oplus \L g_1,\]
where $\L g_0 = \L g(\Omega)$ and $\L g_{\pm 1} \cong V$ as vector spaces, and the action of $\L g_0$ on $\L g_{\pm 1}$ is given by the standard action of $\L g(\Omega)$ on $V$, respectively the transpose of this action (with respect to the inner produt making the cone $\Omega$ symmetric). Now the stabilizer of $-e_V$ in $\check S$ is identified via the Cayley transform with the group $P_0 := G(\Omega)G_1$, where $G_1$ is the exponential of $\L g_1$ (see \cite{HartnickDiss}). In particular, $\check S \cong G(T_\Omega)/P_0$. Thus if we denote by $P$ the analytic subgroup of $H$ with Lie algebra $\L g(\Omega) \oplus V$, then $\check R = H/P$. If we furthermore identify
\[T_{eP}\check R \cong \L g_{-1} \cong V,\]
then the causal structure $\mathcal C$ defining the Kaneyuki order is uniquely determined by $\mathcal C_{eP} = \Omega$. With this information we can prove the proposition:
\begin{proof}[Proof of Proposition \ref{KaneyukiCont}] Identify $\check R$ with $H/P$ as above and denote by  $p: H \to H/P$ the canonical projection. Let $W := \Omega \oplus \L g(\Omega) \oplus V$ and $S := \overline{\langle \exp_H(W)\rangle}$. From the above description of the Kaneyuki causal structure and \cite[Thm. 7.2.(iv) and Thm. 7.4.(i)]{Lawson} we deduce firstly, that
\begin{eqnarray}
p^{-1}([eP, \infty)) = S,
\end{eqnarray}
and secondly that $S$ is a Lie semigroup with edge $P$ and Lie wedge $W$. In particular, $p^{-1}([eP, \infty)) = S$ is locally topologically generated. Since the Kaneyuki order is closed, this proves the proposition.
\end{proof}

\subsection{The geometric order and the continuous order} The purpose of this section is to derive the following relation between the geometric and the maximal continuous order on $G$:
\begin{proposition}\label{ContGeom}
With notation as before we have $G^+_{cont} \subset G^+_{geom}$.
\end{proposition}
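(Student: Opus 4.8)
The plan is to use that, in the tube type case, $G^+_{cont}$ is the Lie semigroup generated by the maximal ${\rm Ad}$-invariant pointed cone; that is, $G^+_{cont} = \overline{\langle \exp(C^+)\rangle}$ with $C^+ = {\bf L}(G^+_{cont})$. Since $G^+_{geom}$ is closed and is a subsemigroup of $G$, it therefore suffices to prove the single inclusion $\exp(C^+) \subseteq G^+_{geom}$: passing to the smallest closed subsemigroup containing $\exp(C^+)$ then yields $G^+_{cont} \subseteq G^+_{geom}$.

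First I would re-express $G^+_{geom}$ through the compression semigroup $S := p^{-1}([eP,\infty))$ introduced in the proof of Proposition \ref{KaneyukiCont}. By definition $g \in G^+_{geom}$ means $x \preceq g.x$ for all $x \in \check R$. Writing $x = hP$ and translating by $h^{-1}$, the $G$-invariance of the Kaneyuki order turns this into $eP \preceq (h^{-1}gh).eP$, i.e. $h^{-1}gh \in S$. Hence $G^+_{geom} = \bigcap_{h \in G} hSh^{-1}$ is the conjugation-invariant core of $S$. As $C^+$ is ${\rm Ad}$-invariant, $\exp(C^+)$ is conjugation-invariant, so the inclusion $\exp(C^+) \subseteq G^+_{geom}$ is equivalent to the single inclusion $\exp(C^+) \subseteq S$.

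Proposition \ref{KaneyukiCont} (more precisely its proof) shows that $S$ is a Lie semigroup with Lie wedge $W = \Omega \oplus \L g_0 \oplus \L g_1 = \pr^{-1}(\Omega)$, where $\pr \colon \L g \to \L g_{-1} \cong V$ is the projection along $\L p = \L g_0 \oplus \L g_1$. Since $X \in W$ forces $\exp(X) \in S$, the inclusion $\exp(C^+) \subseteq S$ would follow from the cone containment $C^+ \subseteq W$, i.e. from $\pr(C^+) \subseteq \Omega$. I expect this last containment to be the main obstacle.

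To prove $\pr(C^+) \subseteq \Omega$ I would argue structurally. The adjoint action of the analytic subgroup with Lie algebra $\L g_0 = \L g(\Omega)$ preserves the grading, so $\pr$ intertwines it with the standard action of $G(\Omega)^0$ on $V$; consequently $\pr(C^+)$ is a convex cone in $V$ invariant under $G(\Omega)^0$. Because $\mathcal D$ is irreducible, $V$ is a simple Euclidean Jordan algebra with symmetric cone $\Omega$, and the only closed convex $G(\Omega)^0$-invariant cones in $V$ are $\{0\}$, $\pm\Omega$ and $V$. Pointedness together with the orientation convention normalising $\mathcal C_{eP} = \Omega$ and making $\exp(C^+)$ positive then selects $\Omega$ over $-\Omega$ (and excludes $V$), giving $\pr(C^+) \subseteq \Omega$; this is precisely the Jordan-theoretic comparison of the maximal invariant cone with the symmetric cone underlying Konstantinov's results, and the explicit identification $C^+ \cap \L g_{-1} = \Omega$ can be extracted from \cite{ClercKoufany, BSH2}. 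Granting $C^+ \subseteq W$, we obtain $\exp(C^+) \subseteq S$, hence $\exp(C^+) = \exp({\rm Ad}(h)C^+) \subseteq hSh^{-1}$ for every $h \in G$, so $\exp(C^+) \subseteq \bigcap_h hSh^{-1} = G^+_{geom}$, and finally $G^+_{cont} = \overline{\langle \exp(C^+)\rangle} \subseteq G^+_{geom}$.
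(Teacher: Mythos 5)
Your global architecture is sound and genuinely different from the paper's. The reductions are all correct: $G^+_{geom}$ is closed because the Kaneyuki order is closed; transitivity of $G$ on $\check R$ and invariance of the order do give $G^+_{geom} = \bigcap_{h \in G} hSh^{-1}$; by ${\rm Ad}$-invariance of $C^+$ and Neeb's theorem $G^+_{cont} = \overline{\langle \exp(C^+)\rangle}$, everything correctly boils down to the single cone containment $C^+ \subseteq W = \pr^{-1}(\Omega)$, where $W$ is the Lie wedge of $S$ computed in the proof of Proposition \ref{KaneyukiCont}. By contrast, the paper avoids any infinitesimal computation: it simply combines Proposition \ref{KaneyukiCont} with Konstantinov's uniqueness theorem (Theorem \ref{Ko}), which forces the Kaneyuki order to be the orbit order \eqref{Ko2} of $G^+_{cont}$, and the inclusion is then immediate. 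Your route, if completed, would be self-contained at the Lie algebra level and would not need Konstantinov's uniqueness statement at all.

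However, the decisive step $\pr(C^+) \subseteq \Omega$ is not actually proved. Two problems: first, $\pr(C^+)$ need not be closed (linear images of closed cones are not closed in general), so your classification of invariant cones applies only to $\overline{\pr(C^+)}$ --- a repairable point. The genuine gap is the exclusion of the case $\overline{\pr(C^+)} = V$: you invoke ``pointedness'' of $C^+$, but pointedness is \emph{not} inherited by linear projections --- the image of a pointed cone under a projection with large kernel (here the kernel is all of $\L g_0 \oplus \L g_1$) can perfectly well be the whole space, and nothing you have said rules this out. Likewise, the fact $C^+ \cap \L g_{-1} = \Omega$, which can indeed be extracted from \cite{ClercKoufany}, is strictly weaker than $\pr(C^+) \subseteq \Omega$ and does not imply it. A correct way to close the gap is by duality: with respect to the Killing form $B$ one has $C^+ = C_{\max} = C_{\min}^{\star}$, the grading satisfies $B(\L g_{-1}, \L g_{-1} \oplus \L g_0) = 0$ while $B$ pairs $\L g_{-1}$ with $\L g_1$ nondegenerately, and the cone $\Omega_1 \subset \L g_1$ (the copy of $\Omega$ in the opposite nilpotent piece) is contained in $C_{\min}$; hence for $X \in C^+$ one gets $B(\pr(X), Y) = B(X,Y) \geq 0$ for all $Y \in \Omega_1$, and self-duality of the symmetric cone yields $\pr(X) \in \Omega$. (Equivalently, one may cite the Paneitz--Ol{\cprime}shanski{\u\i} characterization of $C_{\max}$ as the cone of elements of $\L g$ whose induced vector field on $\check S$ is causal for the Kaneyuki cone field, of which your containment is a restatement at the base point.) As written, your argument is incomplete exactly at its crucial point.
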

The main part of the proof is provided by Konstantinov in \cite{Konstantinov}; we only have to combine his results with Proposition \ref{KaneyukiCont}. Indeed we have \cite[Thm. 1]{Konstantinov}:
\begin{theorem}[Konstantinov]\label{Ko} Up to inversion there is a unique continuous order on $\check R$ given by
\begin{eqnarray}\label{Ko2}
x \preceq y \Leftrightarrow \exists g \in G^+_{cont}: gx = y.\end{eqnarray}
\end{theorem}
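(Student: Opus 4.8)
The plan is to prove both assertions of Theorem \ref{Ko}---uniqueness up to inversion and the explicit formula \eqref{Ko2}---by translating continuous orders on $\check R = G/P$ into Lie-semigroup data and then matching Lie wedges. First I would set up the dictionary. Writing $p: G \to \check R$ for the projection, a $G$-invariant partial order $\preceq$ on $\check R$ is nothing but its preimage semigroup $S := p^{-1}([eP,\infty))$: reflexivity gives $e \in S$, transitivity gives $S\cdot S \subseteq S$, $G$-invariance together with the fact that $P$ fixes $eP$ gives $PSP = S$, and antisymmetry translates into the edge condition $S \cap S^{-1} = P$. The order is continuous precisely when this $S$ is closed and locally topologically generated, i.e.\ a Lie semigroup, and such an $S$ is recovered from its Lie wedge ${\bf L}(S)$, which is a wedge with edge ${\bf L}(P)$ that is $\Ad(P)$-invariant since $PSP = S$. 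Passing to the quotient $\L g/{\bf L}(P) \cong T_{eP}\check R \cong V$, continuous orders therefore correspond injectively to $\Ad(P)$-invariant pointed cones in $V$, and two continuous orders with the same tangent cone coincide because each is the unique infinitesimally generated Lie semigroup with the given Lie wedge.

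For uniqueness I would then compute the isotropy action on $V$. Because $[\L g_1,\L g_{-1}]\subseteq \L g_0 \subseteq {\bf L}(P)$ and $[\L g_1,\L g_0]\subseteq \L g_1\subseteq {\bf L}(P)$, the unipotent factor $G_1 = \exp \L g_1$ acts trivially on $\L g/{\bf L}(P)$; hence the $\Ad(P)$-invariant cones in $V$ are exactly the $G(\Omega)$-invariant ones. By homogeneity and self-duality of the symmetric cone $\Omega$ (Jordan-algebra theory, cf.\ \cite{FK}), the only proper pointed $G(\Omega)$-invariant cones in $V$ with nonempty interior are $\pm\Omega$. This leaves exactly two nontrivial continuous orders, interchanged by inversion, and the one with tangent cone $\Omega$ has Lie wedge $W = \Omega \oplus \L g(\Omega)\oplus V$ and is the Kaneyuki order by Proposition \ref{KaneyukiCont}. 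This establishes the uniqueness statement.

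It remains to identify the Kaneyuki semigroup $S_{geom} := p^{-1}([eP,\infty)) = \overline{\langle \exp_G W\rangle}$ with $G^+_{cont}\cdot P$; this is equivalent to \eqref{Ko2}, since evaluating \eqref{Ko2} at the base point gives $[eP,\infty) = p(G^+_{cont})$ and hence preimage $G^+_{cont}P$. The conceptual heart is a Lie-wedge identity. The image of the maximal $\Ad$-invariant pointed cone $C^+ = {\bf L}(G^+_{cont})$ under $\L g \to \L g/{\bf L}(P)$ is a nonzero $\Ad(P)$-invariant pointed cone, hence equals $\Omega$ for the correct sign; a one-line preimage argument then yields $C^+ + {\bf L}(P) = W$. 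Using conjugation-invariance of $G^+_{cont}$ one checks that $G^+_{cont}P$ is a sub-semigroup. Since $P \subseteq S_{geom}$ (it is the edge) and $G^+_{cont} = \overline{\langle \exp C^+\rangle} \subseteq \overline{\langle \exp W\rangle} = S_{geom}$ (using $C^+ \subseteq W$), one gets $\overline{G^+_{cont}P}\subseteq S_{geom}$; conversely $\exp W \subseteq \overline{G^+_{cont}P}$ because $W = C^+ + {\bf L}(P)$ and each $\exp(X+Y)$ with $X \in C^+$, $Y \in {\bf L}(P)$ is a limit of products $(\exp\tfrac{X}{n}\exp\tfrac{Y}{n})^n \in G^+_{cont}P$. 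Hence $S_{geom} = \overline{G^+_{cont}P}$.

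The main obstacle I expect is upgrading this to the exact set-equality $S_{geom} = G^+_{cont}P$, equivalently showing that $G^+_{cont}P$ is already closed (that $p(G^+_{cont})$ is closed in $\check R$); only then does \eqref{Ko2} hold on the nose rather than after passing to closures, and only then is the tangent cone of the $G^+_{cont}$-order genuinely $\Omega$ so that uniqueness applies to it. This is where the causal geometry must enter: the plan would be to show that $p|_{G^+_{cont}}$ is proper, exploiting that causal futures in $\check R$ are closed and that the intersection of $G^+_{cont}$ with each relevant $P$-coset is controlled by the growth of $\mu_G$ along one-parameter semigroups (as in Lemma \ref{RayExistence}). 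Once this global factorization is in place, antisymmetry of the relation \eqref{Ko2} is automatic, since it then coincides with the genuine partial order supplied by Proposition \ref{KaneyukiCont}; this closedness step is precisely the substantial part of the argument carried out in \cite{Konstantinov}.
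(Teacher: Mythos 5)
First, a point of reference: the paper contains no proof of Theorem \ref{Ko} to compare against --- it is imported verbatim from \cite[Thm.~1]{Konstantinov}, and the paper only supplies Proposition \ref{KaneyukiCont} to link it with the Kaneyuki order. So your proposal must stand on its own as a proof of Konstantinov's theorem, and as written it does not. The uniqueness half has the right skeleton (the dictionary between invariant orders on $\check R$ and closed semigroups $S = p^{-1}([eP,\infty))$ with $S\cap S^{-1} = P$; triviality of the $G_1$-action on $T_{eP}\check R \cong V$; classification of the invariant cones as $\pm\Omega$), but two steps are asserted rather than proved. (a) That a closed, locally topologically generated $S$ is recovered from its Lie wedge ${\bf L}(S)$ is exactly the nontrivial input: in the bi-invariant case this is Neeb's theorem \cite{Neeb}, and in the present homogeneous-space setting it requires Lawson's ordered-manifold machinery \cite{Lawson}; without it your injectivity claim ``same tangent cone $\Rightarrow$ same order'' is unsupported. (b) The image of $C^+$ in $V$ need not a priori be pointed or closed --- images of pointed cones under linear quotients are neither in general --- so you need the irreducibility of the $G(\Omega)$-action on the simple Jordan algebra $V$ both here and to rule out invariant pointed cones with empty interior (your tacit restriction to generating cones).

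The genuine gap is the one you flag yourself: your argument delivers only $\overline{\langle\exp_G W\rangle} = \overline{G^+_{cont}P}$, whereas formula \eqref{Ko2} is the exact equality $p^{-1}([eP,\infty)) = G^+_{cont}P$, and you outsource precisely that closedness step to \cite{Konstantinov} --- circular when the task is to prove Konstantinov's theorem. Moreover, the repair you sketch cannot work as stated: $p|_{G^+_{cont}}$ is \emph{not} proper, because $C^+ \cap {\bf L}(P) \neq \{0\}$ --- the boundary of the maximal invariant cone contains nilpotent rays lying in $\L g_0 \oplus \L g_1$, as one sees already in $\L{sl}_2(\R)$, where the nilpotent boundary ray of the invariant cone in the $\L g_1$-direction lies in ${\bf L}(P)$ --- so the fiber $G^+_{cont}\cap P$ over the base point contains a noncompact one-parameter semigroup. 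Closedness of $G^+_{cont}P$ can still hold without properness, but it must come from finer structure, e.g.\ an exact factorization of the compression semigroup of the form $\exp(\Omega)\cdot P$ via the triple-decomposition theory of Lawson--Lim \cite{LawsonLim} combined with $\exp(\Omega) \subseteq G^+_{cont}$, or from Konstantinov's own argument. As it stands, your proposal (modulo (a) and (b)) proves uniqueness up to inversion and identifies the unique continuous order as the Kaneyuki order, but it does not prove \eqref{Ko2}.
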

Now the proposition is immediate:
\begin{proof}[Proof of Proposition \ref{ContGeom}]
By Proposition \ref{KaneyukiCont} and Theorem \ref{Ko} the Kaneyuki order is given by \eqref{Ko2}. Thus for every $g \in G^+_{cont}$ and $x \in \check R$ we have $gx \succeq x$, whence $g \in G^+_{geom}$.
\end{proof}

\def\cprime{$'$}

\bigskip

\textbf{Authors' addresses:}\\

\textsc{Departement Mathematik,  ETH Z\"urich, R\"amistr. 101, 8092 Z\"urich, Switzerland},\\
\texttt{gabi.ben.simon@math.ethz.ch};\\

\textsc{Mathematics Department, Technion - Israel Institute of Technology, Haifa, 3200, Israel},\\
\texttt{hartnick@tx.technion.ac.il}.

\end{document}